\newtheorem{theorem}{Theorem}[section]
\newtheorem{corollary}[theorem]{Corollary}
\newtheorem{definition}[theorem]{Definition}
\newtheorem{proposition}[theorem]{Proposition}
\newtheorem{lemma}[theorem]{Lemma}
\theoremstyle{definition}
\newtheorem{remark}[theorem]{Remark}
\numberwithin{equation}{section}
\begin{document}

\title[Quantitative isoperimetric inequalities]{Quantitative isoperimetric inequalities for log-convex probability measures on the line}

\author{F. Feo, M.R. Posteraro, C. Roberto}

\date{\today}

\address{Dipartimento di Ingegneria, Universit\`a degli Studi di Napoli Parthenope,
Centro Direzionale Isola C4, 80100 Naples, Italy}

\address{Dipartimento di Matematica e Applicazioni "Renato Caccioppoli", Via Cintia - Complesso Monte  S.Angelo, 80100 Naples, Italy}

\address{Universit\'e Paris Ouest Nanterre la D\'efense, MODAL'X, EA 3454, 200 avenue de la R\'epublique 92000 Nanterre, France}

\email{filomena.feo@uniparthenope.it, posterar@unina.it,\newline croberto@math.cnrs.fr}

%\thanks{The authors were partially supported by...}

\keywords{Isoperimetric inequality, Cheeger inequality, log-convex probability measure, quantitative estimates, heavy tails distribution}
%\subjclass{60E15, 32F32 and 26D10}

\begin{abstract}
The purpose of this paper is to analyze the isoperimetric inequality for symmetric log-convex probability measures on the line. Using geometric arguments we first re-prove that extremal sets in the isoperimetric inequality are
intervals or complement of intervals (a result due to Bobkov and Houdr\'e). Then we
give a quantitative form of the isoperimetric inequality, leading to a somehow anomalous behavior.
Indeed, it could be that a set is very close to be optimal, in the sense that the isoperimetric inequality is almost an equality, but at the same time is very far (in the sense of the symmetric difference between sets) to any extremal sets!
From the results on sets we derive quantitative functional inequalities of
weak Cheeger type.
\end{abstract}

\maketitle

%%%%%%%%%%%%%%%%%%%%%%%%%%%%%%%%%%%%%%%%%%%%%%%%%%%%%%%%%%%%%%%%%%%%%%%%%%%%%%%%%%%%%%%%%%%%%%%%%%%%
%%%%%%%%%%%%%%%%%%%%%%%%%%%%%%%%%%%%%%%%%%%%%%%%%%%%%%%%%%%%%%%%%%%%%%%%%%%%%%%%%%%%%%%%%%%%%%%%%%%%
%%%%%%%%%%%%%%%%%%%%%%%%%%%%%%%%%%%%%%%%%%%%%%%%%%%%%%%%%%%%%%%%%%%%%%%%%%%%%%%%%%%%%%%%%%%%%%%%%%%%
%%%%%%%%%%%%%%%%%%%%%%%%%%%%%%%%%%%%%%%%%%%%%%%%%%%%%%%%%%%%%%%%%%%%%%%%%%%%%%%%%%%%%%%%%%%%%%%%%%%%

\section{Introduction}

The isoperimetric problem in probability spaces is a very rich and extensive theory, with many applications in probability, analysis and geometry, as for example concentration of measure, phenomena in high dimension \cite{ledoux-99}, rearrangement, PDE's \cite{martin-milman-11} etc. See  \textit{e.g.}\ \cite{ledoux-saintflour,barthe-02,roberto-10,milman} for overview papers and monographs.

 The isoperimetric inequality for the Gaussian measure in dimension 1 (the result hold in any dimension \cite{borell75,sudakov-tsirelson}) reads
$$
P(E) \geq I(\gamma(E)) \qquad \mbox{for all  Borel set } E \subset \mathbb{R},
$$
 where $P(E)$ is the perimeter of $E$ (see below for a precise definition), $\gamma(E)=\frac{1}{\sqrt{2\pi}}
 \int_E e^{-x^2/2}dx$ is the measure of the set $E$ with respect to the Gaussian measure $\gamma$
 and $I=\varphi \circ \Phi^{-1}$ is the \emph{isoperimetric profile} (here $\varphi$ stands for the density of $\gamma$ and $\Phi$ for its cumulative  distribution function). Equality cases are given by half-lines (half spaces in dimension higher than 1  \cite{ehrhard-86,carlen-kerce}). Very recently Cianchi, Fusco, Maggi and Pratelli \cite{fusco} solved the harder question about the almost equality cases (see \cite{mossel-neeman1,mossel-neeman2,Eldan} for further developments). If $\bar E$ is an extremal set in the above isoperimetric inequality, defining \emph{the deficit} as
$$
 \delta(E) := P(E) - P(\bar E),
$$
the authors proved the following quantitative isoperimetric inequality
\begin{equation}\label{deficit-intro}
\delta(E) \geq C(\gamma(E))\lambda(E)\sqrt{\log\frac{1}{\lambda(E)}},
\end{equation}
where $\lambda(E):=\inf_{\genfrac{}{}{0pt}{}{H \textrm{half line:}}{\gamma(H)=\gamma(E)}} \gamma(E \Delta H)$, $\Delta$ stands for the symmetric difference between sets and $C$ is a constant that depends on
the measure $\gamma(E)$ of the set.
The quantity $\lambda(E)$ is called the \emph{asymmetry} of $E$: it encodes, in the sense of the symmetric difference, how far the set is from the extremal sets in the isoperimetric inequality.
One of the main issue here is to find the sharp dependence in $\delta$.
Observe that \eqref{deficit-intro} relates two different "distances" from a set $E$ to the extremal sets in the isoperimetric inequality.

%An other equivalent way to state the result on the deficit is the following inequality, called \emph{quantitative  %isoperimetric inequality}:
%$$
%P(E) \geq I(\gamma(E)) + \frac{\lambda(E)^2}{C(\gamma(E))^2} \qquad \mbox{for all  Borel set } E \subset \mathbb{R} %.
%$$
%In dimension 1, the above result was further analyzed and improved by de Castro \cite{decastro},
Using a geometric argument in the spirit of \cite{fusco} de Castro in \cite{decastro} is able to identify all extremal sets in the isoperimetric inequality that have also a fixed asymmetry. More precisely, he proves that  among sets of given measure \emph{and} given asymmetry, intervals or complements of intervals (depending on the range) have minimal perimeter. Furthermore, he deals more generally with any log-concave probability measure and not only the Gaussian measure.

\medskip

In the present paper, our aim is to analyze quantitative isoperimetric inequalities for the class of log-convex probability measures on the line. Assume for simplicity that $\mu$ is a symmetric probability measure on the line, with density $f$ (no atoms). Then $\mu$ is said to be \emph{log-convex} if $\log f$ is convex on $(-\infty,0]$. This class of probability measures includes for example the generalized Cauchy distributions
$dm_{\alpha}(x)=\frac{\alpha dx}{2\left(  1+\left\vert x\right\vert \right)^{1+\alpha}}$, $\alpha >0$, (such distributions play an important role in probability and analysis: they are related to the well-known $\kappa$-concave probability measures \cite{borel1}, are Barentblatt solution of Porous Medium Equation \cite{barenblatt} and are extremal functions in the classical Sobolev inequality \cite{aubin,talenti-76}).
The isoperimetric problem for log-convex probability measures  in dimension 1 is fully solved
by Bobkov and Houdr\'e  \cite{bob-hudre}(see also \cite {morgan}). In higher dimension, for product of log-convex probability measures, extremal sets are not known. However, some estimates on the isoperimetric profile
(dimension dependent as it must be \cite{talagrand-91}) are given in \cite{roberto} (see also \cite{rockner-wang,bcr-05}) with links with the concentration of measure.

Using a geometric argument of the type of Cianchi et al. \cite{fusco}, we shall first re-prove
the result by Bobkov and Houdr\'e \cite{bob-hudre} on the extremal sets in the isoperimetric inequality (see Section 3).
Then, we will obtain a quantitative isoperimetric inequality (in the form of \eqref{deficit-intro}, see Section 4) which appears to be surprising, due to the presence of different shapes in the extremal sets when the measure of the set is precisely $1/2$. Indeed, it could be that a set has very small deficit ($\delta$ above) but large asymmetry (see Section 4.2). This is one of our main result. We emphasize that log-convex probability measures are the first examples  of measures, to the best of our knowledge, displaying such an anomalous property.

Contrary to the case of the log-concave probability measures, there is not a unique description of extremal sets with given measure and given asymmetry. We shall illustrate this with two explicit examples (see Section \ref{sec:preliminary}). However, under few additional assumptions on the density $f$ of the measure, we will give a  unified description of extremal sets with given measure and given asymmetry. From our estimates on sets we finally derive quantitative functional inequalities of weak Cheeger type in some specific cases (see Section \ref{sec:functional}).
\medskip

There is an important activity on the questions of quantitative inequalities. To give a complete overview of the literature would be out of reach. Let us mention only few very recent works somehow related to the present paper.
In \cite{figali-maggi}
the authors deal with the isoperimetric problem for radially symmetric log-convex probability measures (finding extremal sets in $\mathbb{R}^n$).
In \cite{figali-m-p-09} the authors deal with quantitative Brunn-Minkowski inequality (which is related to
the isoperimetric problem in Euclidean space), while functional counter parts can be found in
\cite{fusco-sobolev,dolbeault} on Sobolev inequalities, and in \cite{indrei,bgrs} on log-Sobolev inequalities for the Gaussian measure.

%\tableofcontents

%%%%%%%%%%%%%%%%%%%%%%%%%%%%%%%%%%%%%%%%%%%%%%%%%%%%%%%%%%%%%%%%%%%%%%%%%%%%%%%%%%%%%%%%%%%%%%%%%%%%
%%%%%%%%%%%%%%%%%%%%%%%%%%%%%%%%%%%%%%%%%%%%%%%%%%%%%%%%%%%%%%%%%%%%%%%%%%%%%%%%%%%%%%%%%%%%%%%%%%%%
%%%%%%%%%%%%%%%%%%%%%%%%%%%%%%%%%%%%%%%%%%%%%%%%%%%%%%%%%%%%%%%%%%%%%%%%%%%%%%%%%%%%%%%%%%%%%%%%%%%%
%%%%%%%%%%%%%%%%%%%%%%%%%%%%%%%%%%%%%%%%%%%%%%%%%%%%%%%%%%%%%%%%%%%%%%%%%%%%%%%%%%%%%%%%%%%%%%%%%%%%

\section{Log-convex measures: definition and first properties} \label{sec:log-convex}

In this section we introduce the notion of log-convex probability measures on the line,  we give some examples and prove few basic properties.

Throughout the paper, we assume that $\mu$ is a probability measure on $\mathbb{R}$
 with no atoms and with density $f$. Set $F(x)=\mu\left(  \left(  -\infty
,x\right]  \right)$,  $x \in \mathbb{R}$, for its distribution function
and let
\[
a=\inf\left\{  x\in \mathbb{R}
:F(x)>0\right\}
\qquad \text{ and } \qquad
b=\sup\left\{  x\in
\mathbb{R}
:F(x)<1\right\}  .
\]
In general $-\infty\leq a<b\leq+\infty.$
In analogy with the family of log-concave probability measures, we define the family of log-convex probability measures.

\begin{definition}[Log-convex measure]\label{def log-convex}
Let $\mu$ be a probability measure,  on $\mathbb{R}$, with no atoms and density $f$.
Then, $\mu$ is said to be \emph{log-convex} (respectively \emph{strictly log-convex})
if there exists $x_{0} \in \left(  a,b\right)$  such that $\log f$ is convex (respectively strictly convex) on
$\left(a,x_{0}\right)$ and on $\left(  x_{0},b\right)$.
\end{definition}

The family of log-convex measures includes the generalized Cauchy distributions
\begin{equation} \label{mis 1}%
dm_{\alpha}(x)=\frac{\alpha}{2\left(  1+\left\vert x\right\vert \right)
^{1+\alpha}}dx
\end{equation}
where $\alpha >0$ is a parameter (notice that Cauchy distribution are strictly log-convex).
It also includes the two-sided exponential measures
\begin{equation} \label{misura 0}
d\mu_{1}(x)=\frac{e^{-\left\vert x\right\vert }}{2}dx
\end{equation}
(which is not strictly log-convex) and more generally any probability measures of the form
\begin{equation} \label{mis mu fi}%
d\mu_{\Phi}(x)=Z_{\Phi}^{-1}e^{-\Phi(x)}dx
\end{equation}
 with $\Phi$ \textit{e.g.}\ even and concave on $\left(  0,+\infty\right)$.
The family of log-convex measures intersects (but is different from) the family of $\kappa$-concave probability measures introduced by Borell \cite{borel1}.

Observe that
%, necessarily, the density $f$ of a log-convex probability measure is positive on $\left(a,b\right)$,  increasing on $\left(a,x_{0}\right)  $ and decreasing on $\left( x_{0},b\right)$. On the other hand,
$F$ is strictly increasing on $\left(  a,b\right)$
%so that the inverse function $F^{-1} \colon \left( 0,1\right)  \rightarrow \left(a,b\right)$ exists. Moreover,
and if one sets
\begin{equation} \label{J}
J(t)=f\left(  F^{-1}(t)\right)  \text{ \ \ \ \ \ }0<t<1 ,
\end{equation}
then $\underset{t\rightarrow0}{\lim}J(t)=\underset{t\rightarrow1}{\lim}J(t)=0$, and
the map $t \mapsto J(t)$ is increasing on $\left(  0,F^{-1}(x_{0})\right)  $ and decreasing on $\left(  F^{-1}(x_{0}),1\right)$.
Without any further mention, in the rest of the paper we will extend $J$ up to $0$ and $1$, setting $J(0)=J(1)=0$.
For example, for the two-sided exponential measure \eqref{misura 0}, one can easily check that
$J(t)=\min(t,1-t)$, that for the generalized Cauchy distributions \eqref{mis 1},
$J_\alpha(t)=\alpha 2^\frac{1}{\alpha}\min(t,1-t)^{1+\frac{1}{\alpha}}$ (see e.g. \cite{roberto}) and that for
\eqref{mis mu fi}, under mild assumption on $\Phi$ (see \cite[Proposition 5.21]{roberto} for a precise statement),
$J_\Phi(t) \sim t\Phi^{\prime}\left( \Phi^{-1}\left(  \log\frac{1}{t}\right)\right)$, as $t$ goes to 0.

The following characterization holds.

\begin{proposition}\label{prop mu-J}
Let $\mu$ be a non-atomic probability measure with density $f$ and distribution
function $F$. Let $a=\inf\left\{  x\in \mathbb{R} :F(x)>0\right\}$
 and $b=\sup\left\{  x\in \mathbb{R} :F(x)<1\right\}$. Assume that $F$ is strictly increasing on $\left(a,b\right)$ and denote by $F^{-1} \colon \left( 0,1\right)  \rightarrow \left(a,b\right)$ the inverse of $F$.
Then, the following properties are equivalent:
\begin{enumerate}[(i)]
\item $\mu$ is log-convex (resp. strictly log-convex);
\item $f$ is continuous and positive on $\left(  a,b\right)$ and $J=f \circ F^{-1}$
is convex (resp. strictly convex) on $\left(0,F^{-1}\left(  x_{0}\right)  \right)$ and on $\left(F^{-1}\left(  x_{0}\right)  ,1\right)$.
\end{enumerate}
\end{proposition}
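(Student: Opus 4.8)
The plan is to derive both implications from a single change-of-variables identity. Set $t_0:=F(x_0)\in(0,1)$, so that $F$ restricts to an increasing bijection of $(a,x_0)$ onto $(0,t_0)$ and of $(x_0,b)$ onto $(t_0,1)$, with $F^{-1}(t_0)=x_0$. The guiding heuristic, valid when $f$ is $C^2$ and positive, is that $F^{-1}$ is then $C^1$ with $(F^{-1})'=1/J$, so that $J'(t)=(\log f)'\bigl(F^{-1}(t)\bigr)$ and, differentiating again, $J''(t)=(\log f)''\bigl(F^{-1}(t)\bigr)/J(t)$; since $J>0$ the functions $J''$ and $(\log f)''\circ F^{-1}$ have the same sign, and because $F^{-1}$ is an increasing bijection this already yields the equivalence together with its strict form. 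The real work is to dispense with the smoothness assumption.

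I would first note that the only regularity the argument needs is the continuity and positivity of $f$ on each of the two open subintervals: under (i) this is automatic, since a finite convex function is continuous; under (ii) it is assumed. Granting it, $F$ is $C^1$ on each open piece with $F'=f>0$. Working on $(a,x_0)$ with image $(0,t_0)$: for $x\in(a,x_0)$ and $s\downarrow x$, setting $u=F(s)\downarrow F(x)$, one has
\[
\frac{J(u)-J(F(x))}{u-F(x)}=\frac{f(s)-f(x)}{s-x}\cdot\frac{s-x}{F(s)-F(x)},
\]
and the second factor converges to $1/f(x)>0$ because $F$ is differentiable at $x$ with $F'(x)=f(x)$. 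Hence the right derivative $J'_+(F(x))$ exists if and only if the right derivative $f'_+(x)$ exists, and then $J'_+(F(x))=f'_+(x)/f(x)=(\log f)'_+(x)$; equivalently, $J'_+(t)=(\log f)'_+\bigl(F^{-1}(t)\bigr)$ on $(0,t_0)$, and dually $(\log f)'_+(x)=J'_+(F(x))$ on $(a,x_0)$.

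I would then invoke the classical characterization that a continuous function on an open interval is convex (respectively strictly convex) if and only if its right derivative exists at every point and is non-decreasing (respectively strictly increasing); both $\log f$ and $J=f\circ F^{-1}$ are continuous. If $\log f$ is convex on $(a,x_0)$, then $(\log f)'_+$ exists everywhere and is non-decreasing, so by the identity above $J'_+$ exists everywhere on $(0,t_0)$ and, being $(\log f)'_+\circ F^{-1}$ with $F^{-1}$ increasing, is non-decreasing; hence $J$ is convex on $(0,t_0)$, and strictly convex when $\log f$ is. The converse runs the same way via $(\log f)'_+=J'_+\circ F$. Repeating the argument verbatim on $(x_0,b)$ and $(t_0,1)$ completes the proof.

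The point that requires care is precisely the lack of smoothness: one must work throughout with one-sided derivatives — which is harmless because the one-sided derivatives of a convex function exist everywhere and are monotone — and replace the second-derivative test by the right-derivative characterization of convexity. The rest is bookkeeping (in particular, $F$ is a $C^1$ diffeomorphism only on each open piece, not across $x_0$). If one prefers to avoid one-sided derivatives, an alternative is to establish the $C^2$ case first and then mollify $\log f$ on each piece and pass to the limit; the argument above, however, is self-contained.
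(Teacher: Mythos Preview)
Your argument is correct. The paper does not actually supply a proof here: it simply writes ``The proof (that we omit) is analogous to the case of log-concave measures (see \cite[Proposition~A.1]{bobkov}).'' Your route---reducing the question on each open piece to the identity $J'_+(t)=(\log f)'_+\bigl(F^{-1}(t)\bigr)$ via the change of variables $t=F(x)$, and then invoking the characterization of (strict) convexity through monotonicity of the right derivative---is precisely the natural adaptation of Bobkov's log-concave argument, and your explicit handling of the non-$C^2$ case by working with one-sided derivatives is clean and self-contained.
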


The proof (that we omit) is analogous to the case of log-concave measures (see
\cite[Proposition A.1]{bobkov}).

For simplicity (mainly to avoid unnecessary technicalities), we will restrict ourself to the study of a sub-class of log-convex probability measures.

In what follows, we will only consider log-convex probability measures symmetric
with respect to the origin \footnote{Observe that the choice of the origin is not
restrictive since the measure $\mu(\cdot+\alpha)$, with $\alpha\in\mathbb{R}$, shares the same isoperimetric properties
as the measure $\mu$.} (i.e. $x_{0}=0$ and $J$ is symmetric with respect to $\frac{1}{2}$), such that
%$b=-a$ and
$a=\inf\left\{  x\in \mathbb{R} :F(x)>0\right\}=-\infty$ and $b=\sup\left\{  x\in \mathbb{R} :F(x)<1\right\}=+\infty$.
%Moreover, the convexity of $J$ and the fact that $J(0)=0$ imply, in this case, that the slope function $t \mapsto \frac{J(t)}{t}$  is increasing on %$[0,\frac{1}{2}]$. We may state this result in a lemma since we will use it repeatedly in the sequel.
\begin{definition}[the set $\mathcal{F}$]\label{def:f}
We set $\mathcal{F}$ for the set of all strictly log-convex probability measures $\mu$ on $\mathbb{R}$,
symmetric with respect to the origin, satisfying $$\inf\left\{  x\in \mathbb{R} :F(x)>0\right\}=-\infty \text{ and }
\sup\left\{  x\in \mathbb{R} :F(x)<1\right\}=+\infty.$$
\end{definition}

The following Lemma holds

\begin{lemma} \label{lemino}
Let $\mu\in\mathcal{F}$
%be a log-convex (respectively strictly log-convex) probability measure symmetric with respect to the origin
. Then $t \mapsto \frac{J(t)}{t}$ is a
%increasing (respectively
strictly increasing
%)
 function on $[0,\frac{1}{2}]$.
\end{lemma}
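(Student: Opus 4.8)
The plan is to exploit the strict convexity of $J$ on $(0,\frac12)$ together with the boundary value $J(0)=0$. Recall that for a convex function $g$ on an interval with $g(0)=0$, the difference quotient $t\mapsto g(t)/t$ is non-decreasing, and strictly increasing if $g$ is strictly convex; this is the classical ``three-chords'' (or slope) lemma. So the whole statement should reduce to checking that $J$ is genuinely convex \emph{up to and including} the endpoint $0$, i.e. that extending $J$ by $J(0)=0$ does not destroy convexity on $[0,\frac12)$, and then invoking this elementary monotonicity of chords through the origin. Since $\mu\in\mathcal F$, Proposition~\ref{prop mu-J} gives that $J$ is (strictly) convex on $(0,F^{-1}(x_0))=(0,0)$... wait, with $x_0=0$ and symmetry the relevant interval is $(0,\frac12)$ after using $F^{-1}(0)=F^{-1}(x_0)$ only in the limiting sense; more precisely $J$ is strictly convex on $(0,\frac12)$ by symmetry of $J$ about $\frac12$ and strict log-convexity, so the only real work is at the left endpoint.

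First I would record the elementary lemma precisely: if $g\colon[0,c)\to\mathbb R$ is continuous, convex on $(0,c)$, and $g(0)=0$, then for $0<s<t<c$ one has
\[
\frac{g(s)}{s}\le \frac{g(t)}{t},
\]
with strict inequality when $g$ is strictly convex on $(0,c)$. The proof is the standard one: write $s=\frac{s}{t}\,t+\bigl(1-\frac{s}{t}\bigr)\cdot 0$ and apply convexity to get $g(s)\le \frac{s}{t}g(t)+(1-\frac{s}{t})g(0)=\frac{s}{t}g(t)$, then divide by $s$; strict convexity makes the inequality strict since $0<\frac{s}{t}<1$ and $s\ne 0$. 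Continuity up to $0$ is needed only to make sense of the statement at $t=0$ (where $J(t)/t$ has a limit, possibly finite or $+\infty$, and in any case the monotonicity persists).

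Next I would argue that $J$, extended by $J(0)=0$, is continuous and convex on $[0,\frac12]$. Continuity at $0$ is exactly the stated fact $\lim_{t\to 0}J(t)=0$ from the paragraph following \eqref{J}. Convexity on $[0,\frac12)$ then follows from convexity on the open interval $(0,\frac12)$ plus continuity at the endpoint: a function convex on $(0,c)$ and continuous at $0$ is convex on $[0,c)$ (the defining inequality $g(\lambda u+(1-\lambda)v)\le \lambda g(u)+(1-\lambda)g(v)$ for $u=0$ follows by taking $u_n\downarrow 0$ and passing to the limit using continuity). Strict convexity on $(0,\frac12)$ is what Proposition~\ref{prop mu-J}(ii) yields for $\mu\in\mathcal F$. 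Then applying the elementary lemma with $c=\frac12$, $g=J$, gives that $t\mapsto J(t)/t$ is strictly increasing on $(0,\frac12)$; and at the left endpoint we simply note $J(t)/t\ge \lim_{s\to 0^+}J(s)/s$ (which exists in $[0,+\infty]$ by monotonicity), so the map is strictly increasing on $[0,\frac12)$ as well, hence on $[0,\frac12]$ by including the (finite) value $J(\frac12)/\frac12=2J(\frac12)$.

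The main obstacle, such as it is, is purely a bookkeeping matter: making sure the strict convexity hypothesis from Definition~\ref{def log-convex}/Proposition~\ref{prop mu-J} is correctly transported to the closed-up interval $[0,\frac12]$ and that no degeneracy occurs at $t=0$ (where the slope of $J$ could a priori be $0$ or the quotient $J(t)/t$ could tend to $0$ — this is fine, e.g. for the Cauchy case $J_\alpha(t)\sim \alpha 2^{1/\alpha}t^{1+1/\alpha}$ the quotient tends to $0$, and strict monotonicity still holds). One should also double-check the edge behavior: since $J$ is symmetric about $\frac12$, it has an interior maximum at $\frac12$, so $J$ cannot be convex across $\frac12$ — but that is irrelevant here since we only claim monotonicity of $J(t)/t$ on $[0,\frac12]$, where the convexity of $J$ is available. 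No other subtlety arises.
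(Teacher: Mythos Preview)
The paper does not supply a proof of this lemma; it is simply stated and then used. Your argument is the standard one and is correct: strict convexity of $J$ on $(0,\tfrac12)$ together with $J(0)=0$ forces the secant slope $t\mapsto J(t)/t$ to be strictly increasing, via the three-chords inequality.

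One small point is worth tightening. When you upgrade $g(s)\le \tfrac{s}{t}g(t)$ to a \emph{strict} inequality by ``strict convexity'', you are implicitly applying strict convexity to the combination $s=\tfrac{s}{t}\cdot t+(1-\tfrac{s}{t})\cdot 0$, which involves the endpoint $0$; but the hypothesis (via Proposition~\ref{prop mu-J}) only gives strict convexity on the \emph{open} interval $(0,\tfrac12)$, and your limiting argument for extending convexity to $[0,\tfrac12)$ does not automatically preserve strictness. The clean fix is: from the non-strict version you already have $J(s)/s\le J(t)/t$ for all $0<s<t\le\tfrac12$; if equality held for some pair $s_0<t_0$, monotonicity would force $J(r)/r$ to be constant on $[s_0,t_0]$, i.e.\ $J$ would be linear there, contradicting strict convexity of $J$ on $(0,\tfrac12)$. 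With this adjustment your proof is complete.
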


%Also, we will mainly deal with \emph{strictly} log-convex measures satisfying $a=\inf\left\{  x\in \mathbb{R} :F(x)>0\right\}=-\infty$
 %and $b=\sup\left\{  x\in \mathbb{R} :F(x)<1\right\}=+\infty$. We call this family $\mathcal{F}$. More precisely, we have the following definition.

%\begin{definition}[the set $\mathcal{F}$]\label{def:f}
%We set $\mathcal{F}$ for the set of all strictly log-convex probability measures $\mu$ on $\mathbb{R}$,
%symmetric with respect to the origin, satisfying $\inf\left\{  x\in \mathbb{R} :F(x)>0\right\}=-\infty$
 %and $\sup\left\{  x\in \mathbb{R} :F(x)<1\right\}=+\infty$.
%\end{definition}

%%%%%%%%%%%%%%%%%%%%%%%%%%%%%%%%%%%%%%%%%%%%%%%%%%%%%%%%%%%%%%%%%%%%%%%%%%%%%%%%%%%%%%%%%%%%%%%%%%%%
%%%%%%%%%%%%%%%%%%%%%%%%%%%%%%%%%%%%%%%%%%%%%%%%%%%%%%%%%%%%%%%%%%%%%%%%%%%%%%%%%%%%%%%%%%%%%%%%%%%%
%%%%%%%%%%%%%%%%%%%%%%%%%%%%%%%%%%%%%%%%%%%%%%%%%%%%%%%%%%%%%%%%%%%%%%%%%%%%%%%%%%%%%%%%%%%%%%%%%%%%
%%%%%%%%%%%%%%%%%%%%%%%%%%%%%%%%%%%%%%%%%%%%%%%%%%%%%%%%%%%%%%%%%%%%%%%%%%%%%%%%%%%%%%%%%%%%%%%%%%%%

\section{Isoperimetric inequality}

In this section we recover known isoperimetric inequalities for log-convex measures on the line \cite{bob-hudre}, using
 geometric arguments (\cite{fusco,decastro}) that will allow us, in the next section, to prove quantitative estimates.

Given a probability measure $\mu$ on the line with density $f$,
$\mu$-perimeter of Borel set $E$ is defined as
\[
P_\mu(E)=\int_{\partial^{M}E}f(x)d\mathcal{H}^{0}(x),
\]
where $\mathcal{H}^{0}(x)$ denotes the 0-dimensional Hausdorff measure in $\mathbb{R}$ and
$\partial^{M}E$\ is the essential boundary of $E$ (see \textit{e.g.}\ \cite[Page 108-112]{libro-fusco}).
In most occurrences we will write for simplicity $P$ for $P_{\mu}$.

Bobkov and Houdr\'e proved the following very general statement.

\begin{theorem}[\cite{bob-hudre} Corollary 13.10] Let $d\mu_{\Phi}(x)=Z_{\Phi}^{-1}e^{-\Phi(x)}dx$ be a probability measure, with
$\Phi\colon \mathbb{R} \rightarrow \mathbb{R}$ even, and $Z_\Phi$ the normalization constant. Then the extremal sets in the isoperimetric inequality can be found among half-lines, symmetric segments and their complements.
\end{theorem}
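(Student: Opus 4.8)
The plan is to follow the geometric ``mass-shifting'' strategy of Cianchi--Fusco--Maggi--Pratelli, adapted to the log-convex setting via the function $J=f\circ F^{-1}$. First I would reduce to sets with a controlled structure: since $\partial^M E$ enters the perimeter only through its points, and since $E$ can be modified on a null set, one may assume $E$ is a finite or countable disjoint union of intervals, $E=\bigcup_i(c_i,d_i)$, with perimeter $P(E)=\sum_i\big(f(c_i)+f(d_i)\big)$ (finite, else there is nothing to prove). Pushing everything through $F$, write $u_i=F(c_i)$, $v_i=F(d_i)$, so that $\mu(E)=\sum_i(v_i-u_i)=:m$ is fixed and $P(E)=\sum_i\big(J(u_i)+J(v_i)\big)$, where by Proposition~\ref{prop mu-J} the function $J$ is continuous, positive on $(0,1)$, vanishing at $0$ and $1$, convex and increasing on $(0,\tfrac12)$ and convex and decreasing on $(\tfrac12,1)$ (since $x_0=0$ here). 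The problem is thus: minimize $\sum_i\big(J(u_i)+J(v_i)\big)$ over all ways of writing a measurable set of measure $m$ as a union of intervals in $(0,1)$, i.e.\ over all admissible finite/countable families $0\le u_1<v_1\le u_2<v_2\le\cdots\le 1$ with $\sum(v_i-u_i)=m$.

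The core step is to show that at a minimizer there is only one interval, i.e.\ the set is $(u,v)$ with $v-u=m$ (after the reduction, the corresponding set in $\mathbb{R}$ is $(F^{-1}(u),F^{-1}(v))$, which is a half-line if $u=0$ or $v=1$, a segment, or a complement of such by symmetry). Suppose a minimizer has two (or more) components $(u_1,v_1)$ and $(u_2,v_2)$ with $v_1\le u_2$. I would move the ``inner'' endpoints $v_1$ and $u_2$ toward each other keeping the total length fixed: replace $(v_1,u_2)\mapsto(v_1+t,u_2+t)$ for small $t$, so the gap between the two components shrinks while the outer endpoints $u_1,v_2$ stay put; the length $\sum(v_i-u_i)$ is unchanged, and the perimeter changes by $J(v_1+t)+J(u_2+t)-J(v_1)-J(u_2)$. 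The function $t\mapsto J(v_1+t)+J(u_2+t)$ has the sum of two convex pieces for $t$ in each monotonicity region of $J$, so along the feasible range of $t$ it is a sum of convex functions and hence convex; a convex function on an interval attains its minimum at an endpoint. At an endpoint of the feasible range either the two components merge ($v_1+t=u_2+t$, impossible since $v_1<u_2$) --- so in fact the relevant degeneration is $v_1+t=u_2$ giving $v_1\to u_2$, i.e.\ the components fuse --- or an outer constraint activates (the block hits $0$ or $1$, or collides with a neighbouring component). Iterating, the number of components can only decrease, and the perimeter does not increase; one concludes the infimum is attained (or approached) by a single interval, whose endpoints are then $(u,v)=(0,m)$, $(u,v)=(1-m,1)$, a centered segment, or the complement of a centered segment, depending on whether the unconstrained one-interval minimum of $t\mapsto J(t)+J(t+m)$ over $t\in[0,1-m]$ is interior (giving a symmetric configuration by the symmetry of $J$ about $\tfrac12$) or at an endpoint (giving a half-line). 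This last one-variable optimization is where one uses Lemma~\ref{lemino} and the convexity of $J$ to pin down which shape wins as a function of $m$.

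The main obstacle is handling countably many components and the passage to the infimum rigorously: one must argue that the ``shift the innermost gap'' operation, though decreasing the perimeter, can be organized (e.g.\ always acting on the gap adjacent to the component closest to $\tfrac12$, or using a compactness/lower-semicontinuity argument on the space of such configurations) so as to actually reach a minimizer with finitely many --- ultimately one --- components, rather than producing an infinite regress. A clean way around this is to first prove the claim for finite unions by induction on the number of components (the shift argument strictly reduces the count unless already one interval), then obtain the general case by approximating an arbitrary finite-perimeter set from inside by finite unions of intervals and using lower semicontinuity of $P_\mu$ together with continuity of $J$ at $0$ and $1$. A secondary technical point is the degenerate behavior of $J$ near the endpoints ($J(0)=J(1)=0$), which must be treated when a component is pushed against $0$ or $1$; here the monotonicity of $J$ near the endpoints ensures the perimeter contribution only decreases, so no difficulty arises.
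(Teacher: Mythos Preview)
The paper does not supply its own proof of this statement: it is quoted as Corollary~13.10 of \cite{bob-hudre}, after which the paper proves a sharper result (Theorem~\ref{th generale}) only for the subclass $\mathcal{F}$ of \emph{strictly log-convex} symmetric measures, via the shifting property of Proposition~\ref{shifting} and Lemma~\ref{lemino2}.

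Your argument has a genuine gap at the level of generality of the stated theorem. You appeal to Proposition~\ref{prop mu-J} to obtain that $J=f\circ F^{-1}$ is convex on $(0,\tfrac12)$ and on $(\tfrac12,1)$, but that equivalence \emph{characterizes} log-convex measures; the hypothesis here is only that $\Phi$ is even, so $J$ need not be convex at all (for instance if $\Phi$ is convex, i.e.\ $\mu$ is log-concave, then $J$ is concave). Since your merging step rests entirely on the convexity of $t\mapsto J(v_1+t)+J(u_2+t)$, the proof does not cover the theorem as stated. Even restricted to the log-convex case the shift is mis-described: with $(v_1,u_2)\mapsto(v_1+t,u_2+t)$ the gap $u_2-v_1$ is preserved, not shrunk, and the endpoints of the feasible $t$-range correspond to one component degenerating (i.e.\ $v_1+t=u_1$ or $u_2+t=v_2$), not to fusion $v_1+t=u_2$. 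Moreover $J$ is only \emph{piecewise} convex, so $t\mapsto J(v_1+t)+J(u_2+t)$ is not globally convex once an argument crosses $\tfrac12$, and one must split into regimes exactly as in Lemma~\ref{lemino2}. With those corrections your strategy coincides, in the log-convex setting, with the paper's proof of Theorem~\ref{th generale}; but it does not yield the Bobkov--Houdr\'e result for general even $\Phi$.
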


Our goal is to state a more precise result about extremal sets for symmetric
strictly log-convex probability measures. As already mentioned, we will make use of a geometric argument that we present now.

%%%%%%%%%%%%%%%%%%%%%%%%%%%%%%%%%%%%%%%%%%%%%%%%%%%%%%%%%%%%%%%%%%%%%%%%%%%%%%%%%%%%%%%%%%%%%%%%%%%%
%%%%%%%%%%%%%%%%%%%%%%%%%%%%%%%%%%%%%%%%%%%%%%%%%%%%%%%%%%%%%%%%%%%%%%%%%%%%%%%%%%%%%%%%%%%%%%%%%%%%
%%%%%%%%%%%%%%%%%%%%%%%%%%%%%%%%%%%%%%%%%%%%%%%%%%%%%%%%%%%%%%%%%%%%%%%%%%%%%%%%%%%%%%%%%%%%%%%%%%%%
%%%%%%%%%%%%%%%%%%%%%%%%%%%%%%%%%%%%%%%%%%%%%%%%%%%%%%%%%%%%%%%%%%%%%%%%%%%%%%%%%%%%%%%%%%%%%%%%%%%%

\subsection{The shifting property}

Following \cite{fusco} (for Gauss
measures, see also \cite{decastro} for log-concave measure on the line), we prove in this section a
"shifting property" for intervals and complement of intervals.

%Recall that $\mu$ is a log-convex probability measure symmetric with respect
%to the origin, with density $f$ and distribution function $F$ whose inverse we denote by $F^{-1}$.
%For the sake of simplicity we will also assume that $\mu$ is strictly log-convex, see Definition
%\ref{def log-convex}, and that $\inf\left\{  x\in \mathbb{R} :F(x)>0\right\}=-\infty$
% and $\sup\left\{  x\in \mathbb{R} :F(x)<1\right\}=+\infty$.
% The former assumption implies that $J=f(F^{-1})$ is strictly monotone on $(0,\frac{1}{2})$.
%Some remarks about the case $\log f_{\mu}$ not strictly convex will be done.

%Now we state the notion of shifting interval.
We start with a definition of shifted intervals.
\begin{definition}[Right/left shifted interval]\label{def shift}
Let $\left(a,b\right)$ be an interval of $\mathbb{R}$ with $-\infty<a<b<+\infty$. Then,
\begin{itemize}
\item any interval $\left(a^{\prime},b^{\prime}\right)$ such that
$a<a^{\prime}<b^{\prime}\leq+\infty$ and $\mu\left(\left(a,b\right)\right)
=\mu\left(  \left(  a^{\prime},b^{\prime}\right)  \right)$ is said to be a \emph{right-shifted interval} of $\left(
a,b\right)$;
\item any interval $\left(a^{\prime},b^{\prime}\right)$ such that
$-\infty \leq a^{\prime} < b^{\prime}<b$ and $\mu\left(\left(a,b\right)\right)
=\mu\left( \left(  a^{\prime},b^{\prime}\right)  \right)$ is said to be a \emph{left-shifted interval} of $\left(
a,b\right)$.
\end{itemize}
\end{definition}

The next proposition is one of our key ingredient. It encodes the fact that, depending on the measure of the interval and on its position on the line, the $\mu$-perimeter decreases for left/right shifted intervals.

\begin{proposition}[Shifting property for intervals] \label{shifting}
Let $\mu \in \mathcal{F}$ (see Definition \ref{def:f}).
%be a strictly log-convex probability measure on the line, symmetric with respect to the origin,
%with $\inf\left\{  x\in \mathbb{R} :F(x)>0\right\}=-\infty$
% and $\sup\left\{  x\in \mathbb{R} :F(x)<1\right\}=+\infty$.
\begin{enumerate}[(1)]
\item Let $\left(a,b\right)$ be an interval of measure $\mu\left(\left(  a,b\right)  \right)  <\frac{1}{2}$.
%\begin{description}
\begin{enumerate}[]
\item{(1a)} If $a\geq0$ (resp. $b\leq0$), then
\[
P\left(\left(  a,b\right)  \right)>P\left(\left(  a^{\prime},b^{\prime}\right)  \right)
\]
for any right-shifted (resp. left-shifted) interval of $\left(a,b\right)$.

\item{(1b)} If $a<0$, $b>0$ and $a+b\geq0$ (resp. $a+b\leq0$), then
\[
P\left(\left(  a,b\right)  \right)>P\left(\left(  a^{\prime},b^{\prime}\right)  \right)
\]
for any left-shifted (resp. right-shifted) interval of $\left(a,b\right)$
with $a^{\prime}+b^{\prime}\geq0$ (resp. $a^{\prime}+b^{\prime}\leq0$).
\end{enumerate}
%\end{description}
\item Let $\left(a,b\right)$ be an interval of measure $\mu\left(\left( a,b\right)\right)\geq\frac{1}{2}$.
If $a+b\geq0$ (resp. $a+b\leq0$), then
\[
P\left(\left(  a,b\right)  \right)>P\left(\left(  a^{\prime},b^{\prime}\right)  \right)
\]
for any left-shifted (resp. right-shifted) interval of $\left(a,b\right)$ with
$a^{\prime}+b^{\prime}\geq0$ (resp. $a^{\prime}+b^{\prime}\leq0$).
\end{enumerate}
\end{proposition}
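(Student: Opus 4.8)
The plan is to reduce the whole statement to a one–variable convexity argument for $J=f\circ F^{-1}$. For a (possibly unbounded) interval $(a,b)$ write $s=F(a)\in[0,1)$ and $t=F(b)\in(0,1]$, with the conventions $s=0$ if $a=-\infty$ and $t=1$ if $b=+\infty$; then $\mu((a,b))=t-s=:m$, and since the essential boundary of $(a,b)$ is $\{a,b\}$ (the conventions $J(0)=J(1)=0$ absorbing infinite endpoints),
\[
P\big((a,b)\big)=f(a)+f(b)=J(s)+J(t).
\]
Passing to a right–shifted interval amounts to replacing $(s,t)$ by $(s',s'+m)$ with $s'>s$, a left–shifted one to $s'<s$. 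Using that $\mu$ is symmetric, so that $F(-x)=1-F(x)$, the geometric constraints translate into: $a\ge0\Leftrightarrow s\ge\frac12$; $b\le0\Leftrightarrow t\le\frac12$; $a+b\ge0\Leftrightarrow t\ge1-s\Leftrightarrow s+t\ge1$; and $a+b\le0\Leftrightarrow s+t\le1$. From $\mu\in\mathcal F$, Proposition \ref{prop mu-J} and Lemma \ref{lemino} we shall use: $J$ is continuous on $[0,1]$, symmetric about $\frac12$, strictly increasing and strictly convex on $(0,\frac12)$, strictly decreasing and strictly convex on $(\frac12,1)$; moreover $t\mapsto J(t)/t$ is strictly increasing on $[0,\frac12]$, which in particular yields $J(\sigma)+J(\tau)<J(\sigma+\tau)$ whenever $\sigma,\tau>0$ and $\sigma+\tau\le\frac12$.

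Case (1a), with $m<\frac12$ and $a\ge0$, is immediate: then $\frac12\le s<s'$ and $\frac12\le t=s+m\le t'=s'+m\le1$, so strict monotonicity of $J$ on $[\frac12,1]$ gives $J(s')<J(s)$ and $J(t')\le J(t)$, whence $P((a',b'))<P((a,b))$ for every right shift. The alternative $b\le0$, as well as the "resp." alternatives in (1b) and (2), follow by applying the cases already proved to the reflected interval $(-b,-a)$: the reflection $x\mapsto-x$ leaves $\mu$ and the perimeter unchanged and exchanges left– and right–shifts (and the signs of $a+b$ and of $a'+b'$). So it remains to prove (1b) and (2) in the case $a+b\ge0$.

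In both of these cases one has $0\le s\le\frac12\le t\le1$ and $s+t\ge1$ (in (1b) in fact $0<s<\frac12<t<1$, since $a=-\infty$ or $b=+\infty$ would contradict $a+b\ge0$ together with $m<\frac12$). Set $\sigma:=s$ and $\tau:=1-t$, so that $\sigma,\tau\in[0,\frac12]$, $\sigma+\tau=1-m$ is fixed along any shift, and the condition $s+t\ge1$ reads $\sigma\ge\tau$; by symmetry of $J$,
\[
P\big((a,b)\big)=J(s)+J(t)=J(\sigma)+J(\tau).
\]
A left shift decreases $s$ and $t$, hence decreases $\sigma$ and increases $\tau$; keeping the admissibility condition $a'+b'\ge0$, i.e.\ $\sigma'\ge\tau'$, it replaces $(\sigma,\tau)$ by $(\sigma',\tau')$ with $\tau<\tau'\le\sigma'<\sigma$ and $\sigma'+\tau'=1-m$. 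It therefore suffices to show the elementary contracting–pair inequality: if $0\le\tau<\tau'\le\sigma'<\sigma\le\frac12$ with $\sigma+\tau=\sigma'+\tau'$, then $J(\sigma')+J(\tau')<J(\sigma)+J(\tau)$. If $\tau>0$ then $\sigma,\sigma',\tau,\tau'\in(0,\frac12)$ and this is the classical consequence of strict convexity of $J$ on $(0,\frac12)$ (write $\sigma'$ and $\tau'$ as the same convex combination, with exchanged weights, of $\sigma$ and $\tau$, and add the two strict Jensen inequalities). If $\tau=0$ — which forces $b=+\infty$ and occurs only in case (2), where $\sigma+\tau=1-m\le\frac12$ — then $\sigma=\sigma'+\tau'$ with $\sigma',\tau'>0$, and Lemma \ref{lemino} gives $J(\sigma')+J(\tau')<J(\sigma'+\tau')=J(\sigma)=J(\sigma)+J(\tau)$. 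This proves (1b) and (2) for $a+b\ge0$, hence the proposition.

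I expect the only genuinely delicate point to be the bookkeeping: verifying that in each regime the ranges of $s$ and $t$ (equivalently of $\sigma$ and $\tau$) are exactly as claimed, that "$a'+b'\ge0$" is equivalent to "$\sigma'\ge\tau'$" — this is what keeps the pair on the correct side of its midpoint $\frac{1-m}{2}$, so that contracting it strictly lowers $J(\sigma)+J(\tau)$, whereas dropping the constraint would eventually raise it again — and that the degenerate endpoints are correctly handled: $a=-\infty$ is incompatible with $a+b\ge0$, whereas $b=+\infty$ does occur in case (2) and is precisely where Lemma \ref{lemino}, rather than plain strict convexity of $J$ on the open interval, is needed. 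The convexity inputs themselves are routine.
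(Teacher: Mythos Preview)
Your proof is correct and follows essentially the same approach as the paper: both parameterize intervals of fixed measure $m$ by $s=F(a)$ and analyze the perimeter $J(s)+J(s+m)$ through the convexity and symmetry of $J$. The paper defers the monotonicity analysis to a separate lemma (Lemma~\ref{lemino2}, whose proof is left to the reader), whereas you carry out the case analysis directly via the substitution $(\sigma,\tau)=(s,1-t)$ and the contracting-pair inequality; your handling of the edge case $\tau=0$ is correct but in fact unnecessary, since Definition~\ref{def shift} requires the original interval to satisfy $-\infty<a<b<+\infty$.
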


\begin{remark}
Without the strict log-convexity assumption, the results above still hold but no more with strict inequalities.
\end{remark}

\begin{proof}
Let $\left(a,b\right)$ be an interval of measure $p\in\left(0,1\right)$, with $-\infty\leq a<b\leq+\infty$.
Its perimeter is
\[
P\left((a,b)\right)=f(a)+f(b)=J(F(a))+J(F(b)) .
\]
The value $p\in\left(  0,1\right)$ being fixed, necessarily $b=F^{-1}\left(p+F\left(a\right)\right)$. Denoting $a=F^{-1}(t)$, we
may study the function
\begin{equation*} %\label{perimetro funzione di a}
P\left((a,b)\right)=J(t)+J(p+t):=\psi_{p}(t)
\end{equation*}
as a function of $t\in[0,1-p]$.
The expected results follow at once from Lemma \ref{lemino2} below.
\end{proof}

\begin{lemma}\label{lemino2}
Let $\mu \in \mathcal{F}$.
For $p\in(0,1)$ and $t \in \left[ 0,1-p\right]$, set $\psi_{p}(t)=J(t)+J(p+t)$. Then,
\begin{enumerate}[i)]
\item $\psi_p$ is symmetric about $\frac{1-p}{2}$,
\item if $p \geq 1/2$, $\psi_p$ is strictly convex on $\left[0,1-p\right]$, decreasing on $\left(0,\frac{1-p}{2}\right)$, increasing on  $\left(\frac{1-p}{2},1-p\right)$, $\frac{1-p}{2}$ is a minimum and $0,1-p$ are maxima.
\item if $p <1/2$, $\psi_{p}$ is strictly convex on $\left(0,\frac{1}{2}-p\right)$,
on $\left(\frac{1}{2}-p,\frac{1}{2}\right)$ and on $\left(\frac{1}{2},1-p\right)$, it is increasing on $\left(0,\frac{1}{2}-p\right)$, decreasing on  $\left(\frac{1}{2}-p,\frac{1-p}{2}\right)$, increasing on $\left(\frac{1-p}{2},\frac{1}{2}\right)$
and decreasing on  $\left(\frac{1}{2},1-p\right)$. Moreover
$\frac{1}{2}-p, \frac{1}{2}$ are maxima and $\frac{1-p}{2}, 0, 1-p$ are minima.
\end{enumerate}
\end{lemma}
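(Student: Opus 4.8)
The plan is to exploit the structure of $J$ coming from $\mu\in\mathcal F$, namely: $J(0)=J(1)=0$; $J$ is symmetric about $\tfrac12$; $J$ is strictly convex on $(0,\tfrac12)$ and on $(\tfrac12,1)$; $J$ is increasing on $(0,\tfrac12)$, decreasing on $(\tfrac12,1)$; and, by Lemma~\ref{lemino}, $t\mapsto J(t)/t$ is strictly increasing on $[0,\tfrac12]$. Item~(i) is immediate: the symmetry of $J$ about $\tfrac12$ gives $J(1-p-t)=J(p+t)$ and $J(1-(p+t))=J(1-p-t)$... more directly, $\psi_p(1-p-t)=J(1-p-t)+J(p+(1-p-t))=J(1-p-t)+J(1-t)=J(p+t)+J(t)=\psi_p(t)$, using $J(1-s)=J(s)$ twice. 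So I would dispose of (i) in one line and then only analyze $\psi_p$ on the left half of its domain, reflecting to get the rest.

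For the convexity statements, the key observation is that $\psi_p(t)=J(t)+J(p+t)$ is a sum of two shifts of $J$, and $J$ fails to be convex only at the single breakpoint $\tfrac12$. So $\psi_p$ is smooth-and-strictly-convex on any interval of $t$ on which neither $t$ nor $p+t$ equals $\tfrac12$; the breakpoints of $\psi_p$ in $[0,1-p]$ are exactly $t=\tfrac12-p$ (where $p+t=\tfrac12$) and, when $p<\tfrac12$, $t=\tfrac12$ (where $t=\tfrac12$). When $p\ge\tfrac12$ the first breakpoint $\tfrac12-p\le0$ lies outside the open domain and $\tfrac12>1-p$ also lies outside, so $\psi_p$ is strictly convex on all of $(0,1-p)$; combined with the symmetry about $\tfrac{1-p}{2}$ this forces the minimum at $\tfrac{1-p}{2}$ and the maxima at the endpoints, giving (ii). When $p<\tfrac12$ the domain splits at $\tfrac12-p<\tfrac{1-p}{2}<\tfrac12$ into three pieces on each of which $\psi_p$ is strictly convex, which is the convexity claim in (iii).

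The monotonicity in (iii) is where the real work is, and it is the step I expect to be the main obstacle, because strict convexity on the three pieces only pins down the \emph{shape} (each piece is U-shaped or monotone) — I still have to locate the turning points and check the signs of the one-sided derivatives at the breakpoints $\tfrac12-p$ and $\tfrac12$. By symmetry it suffices to handle $(0,\tfrac12-p)$ and $(\tfrac12-p,\tfrac{1-p}{2})$. On $(0,\tfrac12-p)$ both $t$ and $p+t$ lie in $(0,\tfrac12)$ where $J$ is increasing, so $\psi_p'=J'(t)+J'(p+t)>0$ and $\psi_p$ is increasing there; at $t=\tfrac12-p$ I compare the left derivative $J'((\tfrac12-p)^-)+J'(\tfrac12^-)$ with the right derivative $J'((\tfrac12-p)^+)+J'(\tfrac12^+)$ and, crucially, at the \emph{peak} $\tfrac12-p$ I must show $\psi_p$ actually turns around, i.e. that the right derivative is $\le 0$: since $J'(\tfrac12^+)<0$ and by the convexity drop at $\tfrac12$ this negative jump dominates, one gets $\psi_p'((\tfrac12-p)^+)<0$ — this is the delicate sign estimate, and it is exactly here that Lemma~\ref{lemino} (monotonicity of $J(t)/t$, equivalently $tJ'(t)\le J(t)$ type comparisons, i.e. $J$ is "star-shaped" so its slopes are controlled) gets used to show the incoming value of $J'$ near $\tfrac12$ from the left cannot be too large. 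Hence $\psi_p$ is decreasing just to the right of $\tfrac12-p$; strict convexity on $(\tfrac12-p,\tfrac12)$ then means $\psi_p'$ is strictly increasing there, it is negative at $\tfrac12-p^+$ and, by the symmetry of $\psi_p$ about $\tfrac{1-p}{2}$, vanishes exactly at $\tfrac{1-p}{2}$ and is positive on $(\tfrac{1-p}{2},\tfrac12)$. Reflecting through $\tfrac{1-p}{2}$ gives the behavior on $(\tfrac12,1-p)$ and identifies $\tfrac12-p,\tfrac12$ as (local, hence by the piecewise structure the only) maxima and $0,\tfrac{1-p}{2},1-p$ as minima, completing (iii). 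Throughout I would phrase everything in terms of one-sided derivatives of the convex function $J$ on each monotone piece, so that no second-derivative computation is needed and the strict convexity of $J$ on each half-interval delivers the strict inequalities in the statement.
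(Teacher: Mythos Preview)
Your plan is sound and matches what the paper intends: the paper's own ``proof'' is the single sentence ``The proof is elementary and left to the reader,'' so there is no detailed argument to compare against. Your decomposition --- symmetry from $J(1-s)=J(s)$, piecewise strict convexity of $\psi_p$ away from the points where $t$ or $p+t$ hits $\tfrac12$, and then a monotonicity analysis on the left half using one-sided derivatives --- is exactly the elementary route.

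One correction at the spot you flag as ``delicate.'' You do \emph{not} need Lemma~\ref{lemino} to show $\psi_p'((\tfrac12-p)^+)<0$, and your parenthetical description of that lemma is reversed: $J(t)/t$ strictly increasing means $tJ'(t)\ge J(t)$, not $\le$. The sign you want follows immediately from strict convexity and symmetry of $J$ alone. Indeed
\[
\psi_p'((\tfrac12-p)^+)=J'_+(\tfrac12-p)+J'_+(\tfrac12)=J'_+(\tfrac12-p)-J'_-(\tfrac12)<0,
\]
since $J(\tfrac12+h)=J(\tfrac12-h)$ gives $J'_+(\tfrac12)=-J'_-(\tfrac12)$, and strict convexity of $J$ on $(0,\tfrac12)$ gives $J'_+(\tfrac12-p)<J'_-(\tfrac12)$ for $p>0$. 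So drop the appeal to Lemma~\ref{lemino} here; it only muddies an otherwise clean argument. With that fix, your outline constitutes a complete proof.
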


\begin{figure}[ht]
\psfrag{p>}{$p \geq \frac{1}{2}$}
\psfrag{p<}{$p<\frac{1}{2}$}
\psfrag{to}{\tiny$\!\!\!\frac{1-p}{2}$}
\psfrag{12}{\tiny$\;\frac{1}{2}$}
\psfrag{1-p}{\tiny$1-p$}
\psfrag{12-p}{\tiny$\!\frac{1}{2}-p$}
\psfrag{t}{$t$}
\begin{center}
\includegraphics[width=.80\columnwidth]{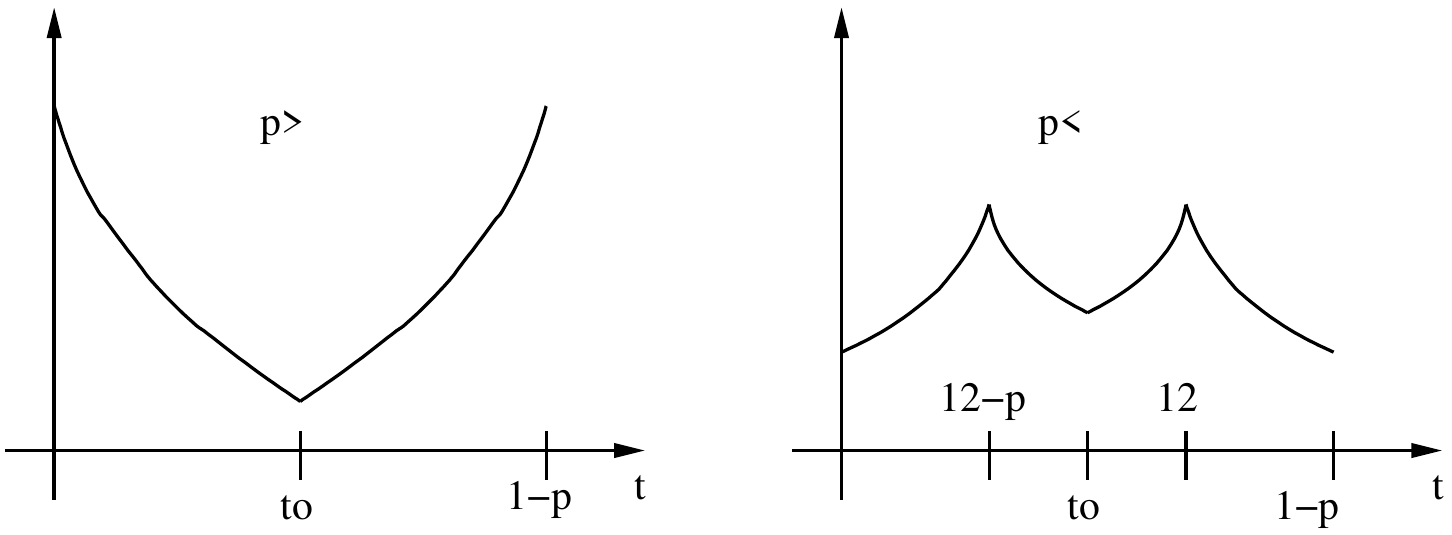}
\end{center}
\caption{The shape of the function $t \mapsto \psi_p(t)$ for $p \geq 1/2$ and $p < 1/2$.}
\label{fig:shiftinglemma}
\end{figure}

\begin{proof}
The proof is elementary and left to the reader.
\end{proof}

We end this section with a converse of Proposition \ref{shifting}, namely that the shifting property of Proposition
\ref{shifting} (to be really precise only a weaker form is needed) implies that $\mu$ is log-convex.
%We observe that we need just "a part" of "the shifting property".

\begin{proposition}
Let $\mu$ be a probability measure on the line, symmetric with respect to a point (say the origin for simplicity) with
density $f$. Assume that $f$ is continuous, positive on $(-\alpha,\alpha)$ for some $\alpha \in (0,\infty]$ and such that
the following shifting property holds:

$1)$ if\textit{ }$\left(  a,b\right)  $\textit{ is an interval with }%
$\mu\left(  \left(  a,b\right)  \right)  <\frac{1}{2},a<0$, $b>0$ and
$a+b\geq0$ $($risp. $a+b\leq0)$ then
\[
P\left(  (a,b)\right)  \geq P\left(  (a^{\prime},b^{\prime})\right)
\]
for any left-shifted (risp. right-shifted) of $\left(  a,b\right)  $ with
$a^{\prime}+b^{\prime}\geq0$ (risp. $a^{\prime}+b^{\prime}\leq0$);

$2)$ if\textit{ }$\left(  a,b\right)  $\textit{ is an interval with }%
$\mu\left(  \left(  a,b\right)  \right)  \geq\frac{1}{2}$and $a+b\geq0$
$($risp. $a+b\leq0)$ then
\[
P\left((  a,b)\right)\geq P\left((  a^{\prime},b^{\prime
})\right)
\]
for any left-shifted (risp. right-shifted) of $\left(  a,b\right)  $ with
$a^{\prime}+b^{\prime}\geq0$ (risp. $a^{\prime}+b^{\prime}\leq0$).

Then $\mu$ is log-convex.
\end{proposition}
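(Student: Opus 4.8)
The plan is to argue by contradiction: assume the shifting property $1)$–$2)$ holds but $\mu$ is not log-convex, and produce an interval and a shift of it that increase (or at least do not decrease) the perimeter, contradicting the hypothesis. Since the relevant quantity is $P((a,b)) = f(a) + f(b) = J(F(a)) + J(F(b))$, the natural object to study is again the function $J = f \circ F^{-1}$ on $(0,1)$, and the failure of log-convexity of $\mu$ translates (by the same computation as in Proposition \ref{prop mu-J}) into the failure of convexity of $J$ on one of the two intervals $(0,1/2)$ or $(1/2,1)$; by the symmetry of $\mu$ we may assume $J$ fails to be convex on $(1/2,1)$, equivalently on $(0,1/2)$.

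The key step is to reformulate the shifting property in terms of $\psi_p(t) = J(t) + J(p+t)$, exactly as in the proof of Proposition \ref{shifting}. A left-shift of an interval $(a,b)$ corresponds to decreasing $t = F(a)$ while keeping $p$ fixed; the constraint $a<0<b$, $a+b \ge 0$ corresponds to $t$ lying in the sub-range $(\frac12 - p, \frac{1-p}{2})$ (and the mirror range for right-shifts), while $\mu((a,b)) \ge \frac12$ corresponds to $p \ge \frac12$ with $t \in (0, \frac{1-p}{2})$. In each case, the hypothesis $P((a,b)) \ge P((a',b'))$ for all admissible shifts says precisely that $\psi_p$ is non-increasing on the relevant sub-interval of $[0,1-p]$ on the left of its midpoint $\frac{1-p}{2}$ (and non-decreasing on the mirror sub-interval to the right). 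I would then show that, letting $p$ range appropriately, these monotonicity statements force $\psi_p$ to be convex, or at least force $J$ to satisfy the two-point convexity inequality $J\big(\frac{s+u}{2}\big) \le \frac12 (J(s) + J(u))$ for all $s,u$ in the same half of $(0,1)$. Concretely: given $s < u$ both in $(1/2,1)$, choose $p = u - s$ and $t_0 = s$; then $\psi_p(t_0) = J(s) + J(u)$, and one wants to compare this with $2 J\big(\frac{s+u}{2}\big) = 2\psi_p\big(\frac{s+u}{2} - \text{(appropriate offset)}\big)$ — here I would exploit the symmetry $\psi_p(t) = \psi_p(1-p-t)$ together with the monotonicity coming from the shifting hypothesis to sandwich the midpoint value below the endpoint average. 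Running this for all such pairs yields midpoint-convexity of $J$ on each half; since $f$ is continuous and positive, $J$ is continuous, so midpoint-convexity upgrades to convexity, and Proposition \ref{prop mu-J} then gives that $\mu$ is log-convex.

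I would organize the write-up as follows. First, reduce to showing $J$ is convex on $(0,1/2)$ and on $(1/2,1)$ (via Proposition \ref{prop mu-J} and symmetry). Second, translate each of the hypotheses $1)$ and $2)$ into a monotonicity assertion for $\psi_p$ on an explicit interval, being careful about which range of $p$ and which sub-interval of $[0,1-p]$ each case covers — hypothesis $2)$ ($p \ge 1/2$) handles shifts with $t$ near $0$, i.e. one end of the interval, while hypothesis $1)$ ($p < 1/2$) handles the "balanced" range around the midpoint. Third, verify that as $p$ varies over $(0,1)$ and $t$ over the admissible sub-ranges, the pairs $(t, p+t)$ sweep out all pairs $(s,u)$ with $s<u$ lying in a common half of $(0,1)$, and that the monotonicity statements collectively give midpoint-convexity. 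Fourth, invoke continuity of $J$ to conclude.

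The main obstacle I anticipate is the bookkeeping in the third step: the hypotheses are stated with sign constraints on $a+b$ and on $a'+b'$, which become constraints on $t$, $p$, $t'$ relative to the two special points $\frac12 - p$ and $\frac{1-p}{2}$, and one must check that these constraints still allow enough shifts to pin down convexity on \emph{all} of each half-interval rather than just part of it — in particular the region near $t=1/2$ (where $p+t$ crosses $1/2$) and the region covered only by case $2)$ need separate, careful attention. A secondary subtlety is that the hypothesis gives only non-strict inequalities, so the conclusion is (correctly) only log-convexity, not strict log-convexity, and one must not accidentally need a strict inequality anywhere in the argument. Once the range-sweeping is set up cleanly, the convexity deduction itself is a short two-point comparison using the symmetry of $\psi_p$.
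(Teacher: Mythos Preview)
Your framework is right --- reduce to midpoint convexity of $J$ on $(0,\tfrac12)$ via Proposition~\ref{prop mu-J}, then extract this from the shifting hypothesis --- but the mechanism you propose in step~3 does not work as written. If $s<u$ lie both in $(\tfrac12,1)$ and you set $p=u-s$, $t_0=s$, then the symmetry point of $\psi_p$ is $\tfrac{1-p}{2}=\tfrac{1-u+s}{2}$, and $\psi_p\big(\tfrac{1-p}{2}\big)=2J\big(\tfrac{1-u+s}{2}\big)$, which is \emph{not} $2J\big(\tfrac{s+u}{2}\big)$. There is no ``appropriate offset'' that makes $2J\big(\tfrac{s+u}{2}\big)$ a value of $\psi_p$; values of $\psi_p$ are sums $J(t)+J(p+t)$ at two \emph{distinct} arguments separated by $p$, and they equal $2J(\cdot)$ only at the single symmetry point. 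So the monotonicity of $\psi_p$ alone, with your choice of $p$, does not sandwich the midpoint value the way you want. You correctly flagged this step as the obstacle, but the resolution you sketch is not one.

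The paper's argument sidesteps the sweep entirely by exploiting the symmetry of $J$ about $\tfrac12$ rather than trying to keep both endpoints in one half. Fix $t\in(0,\tfrac12)$ and $d$ with $t\pm d\in(0,\tfrac12)$, and take the \emph{symmetric} interval $(a,b)=(F^{-1}(t),F^{-1}(1-t))$, so $a+b=0$ and $P((a,b))=2J(t)$. Its right-shift $(a',b')=(F^{-1}(t+d),F^{-1}(1-t+d))$ has $a'+b'\ge0$ and perimeter $J(t+d)+J(1-t+d)=J(t+d)+J(t-d)$, the last equality by symmetry of $J$. Now apply the shifting hypothesis with $(a',b')$ as the \emph{starting} interval and $(a,b)$ as its left-shift (with $a+b=0\ge0$); whether $\mu((a',b'))=1-2t$ is $\ge\tfrac12$ or $<\tfrac12$ is covered by case 2) or case 1) respectively. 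This yields $P((a',b'))\ge P((a,b))$, i.e.\ $J(t+d)+J(t-d)\ge 2J(t)$ --- midpoint convexity directly, with no range-sweeping and no bookkeeping. Continuity then upgrades to convexity, as you said. The single trick you are missing is to place one endpoint in each half and let the symmetry of $J$ fold them back.
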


\begin{proof}
%Let $f_{\mu}$ be a continuous even function positive on an interval and zero
%outside. Let us define $J_{\mu}(t)$ as in (\ref{J}).
By Proposition \ref{prop mu-J}, continuity and symmetry of $J(t)$ we only have to prove
that, for all $t \in \left(  0,\frac{1}{2}\right)$ and all $d$ so that $t\pm d\in\left(  0,\frac{1}{2}\right)$,
it holds
\begin{equation}  \label{dis convex}
J(t)\geq\frac{1}{2}\left(  J(t-d)+J(t+d)\right)   .
\end{equation}
Fix $t\in\left(0,\frac{1}{2}\right)$ and $d$ such that $t\pm d\in\left(  0,\frac{1}{2}\right)$ and set
$a=F^{-1}(t)$, $b=F^{-1}(1-t)$, $a^{\prime}=F^{-1}(t+d)$ and $b^{\prime}=F^{-1}(1-t+d)$.
With these notations in hand, we observe that, by symmetry of $J$,
$$
P\left((  a,b)\right) = J(t)+J(1-t)=2J(t)
$$
and
$$
P\left( ( a^{\prime},b^{\prime})\right)
=
J(t+d)+J(1-t+d)=J(t+d)+J(t-d) .
$$
Then (\ref{dis convex}) precisely means that $P\left( ( a,b)\right) \geq P\left( ( a^{\prime},b^{\prime})\right)$
which is guaranteed by the shifting property assumption.
This ends the proof.
\end{proof}

%%%%%%%%%%%%%%%%%%%%%%%%%%%%%%%%%%%%%%%%%%%%%%%%%%%%%%%%%%%%%%%%%%%%%%%%%%%%%%%%%%%%%%%%%%%%%%%%%%%%
%%%%%%%%%%%%%%%%%%%%%%%%%%%%%%%%%%%%%%%%%%%%%%%%%%%%%%%%%%%%%%%%%%%%%%%%%%%%%%%%%%%%%%%%%%%%%%%%%%%%
%%%%%%%%%%%%%%%%%%%%%%%%%%%%%%%%%%%%%%%%%%%%%%%%%%%%%%%%%%%%%%%%%%%%%%%%%%%%%%%%%%%%%%%%%%%%%%%%%%%%
%%%%%%%%%%%%%%%%%%%%%%%%%%%%%%%%%%%%%%%%%%%%%%%%%%%%%%%%%%%%%%%%%%%%%%%%%%%%%%%%%%%%%%%%%%%%%%%%%%%%

\subsection{Isoperimetric problem for intervals and complement of intervals}

The geometric tool given in the previous section will allow us to answer the following warm-up isoperimetric problem: among all intervals (and then among all complement of intervals) of given measure, which one(s) has(have) minimal perimeter? The answer for intervals is stated in the next corollary: depending on the measure of the interval, the interval with minimal perimeter has to be found at infinity (half line), or centred around the origin.

We need a preliminary result.

\begin{lemma}\label{def:p0}
Let $\mu \in \mathcal{F}$. Then, there exists a unique $p_{0}\in\left(0,\frac{1}{2}\right)$ satisfying $J(1-p_0)=2J((1-p_0)/2)$
and such that $J(1-p) < 2J((1-p)/2)$ for $p \in [0,p_0)$ and $J(1-p) > 2J((1-p)/2)$ for $p\in(p_0,1/2]$, where $J(t)$ is defined in (\ref{J}).
\end{lemma}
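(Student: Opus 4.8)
The plan is to study the auxiliary function $g(p) := J(1-p) - 2J\bigl(\tfrac{1-p}{2}\bigr)$ on $[0,\tfrac12]$ and show it has exactly one zero in the open interval $(0,\tfrac12)$, with the stated sign change. First I would record the boundary values. At $p=0$ we get $g(0) = J(1) - 2J(\tfrac12) = -2J(\tfrac12) < 0$, since $J(1)=0$ and $J(\tfrac12)>0$ (the density is positive on all of $\mathbb{R}$, so $J>0$ on $(0,1)$). At $p=\tfrac12$ we get $g(\tfrac12) = J(\tfrac12) - 2J(\tfrac14) = J(\tfrac12) - 2J(\tfrac14)$; by Lemma~\ref{lemino} the map $t\mapsto J(t)/t$ is strictly increasing on $[0,\tfrac12]$, so $J(\tfrac12)/(\tfrac12) > J(\tfrac14)/(\tfrac14)$, i.e. $2J(\tfrac12) > 4 J(\tfrac14)$, hence $J(\tfrac12) > 2J(\tfrac14)$ and $g(\tfrac12) > 0$. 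Continuity of $J$ on $[0,1]$ (Proposition~\ref{prop mu-J}(ii), together with the convention $J(0)=J(1)=0$) then gives continuity of $g$ on $[0,\tfrac12]$, so by the intermediate value theorem there exists at least one $p_0 \in (0,\tfrac12)$ with $g(p_0)=0$.

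The substance is uniqueness together with the monotonicity of the sign, and here I expect the main obstacle to lie. The cleanest route is to show $g$ is strictly increasing on $[0,\tfrac12]$, which immediately yields uniqueness and the sign pattern $g<0$ on $[0,p_0)$, $g>0$ on $(p_0,\tfrac12]$. Reparametrizing by $s = \tfrac{1-p}{2} \in [\tfrac14,\tfrac12]$, so $1-p = 2s$ and $p$ decreasing in $s$, it is equivalent to show that $h(s) := J(2s) - 2J(s)$ is strictly decreasing on $[\tfrac14,\tfrac12]$. Write $h(s) = s\bigl(\tfrac{J(2s)}{s} - \tfrac{2J(s)}{s}\bigr) = 2s\bigl(\tfrac{J(2s)}{2s} - \tfrac{J(s)}{s}\bigr)$. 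By Lemma~\ref{lemino} the factor $\tfrac{J(2s)}{2s} - \tfrac{J(s)}{s}$ is positive (since $2s > s$ are both in $[0,\tfrac12]$ when $s\le\tfrac14$... careful: for $s>\tfrac14$ we have $2s>\tfrac12$). This is where one must be cautious: for $s \in (\tfrac14,\tfrac12]$ the argument $2s$ lies in $(\tfrac12,1]$, outside the range where Lemma~\ref{lemino} applies directly. By the symmetry of $J$ about $\tfrac12$ we have $J(2s) = J(1-2s)$ with $1-2s \in [0,\tfrac12)$, so I would instead work with $h(s) = J(1-2s) - 2J(s)$, a difference of two quantities each controlled on $[0,\tfrac12]$: as $s$ increases, $1-2s$ decreases so $J(1-2s)$ decreases (since $J$ is increasing on $[0,\tfrac12]$ — indeed strictly, by strict log-convexity via Lemma~\ref{lemino} applied to $J(t)/t$, or more simply because $J$ increasing on $(0,F^{-1}(0))=(0,\tfrac12)$ was noted after \eqref{J}), while $J(s)$ increases, so $-2J(s)$ decreases; hence $h$ is a sum of two (strictly) decreasing functions and is strictly decreasing. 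Translating back, $g$ is strictly increasing on $[0,\tfrac12]$.

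Finally I would assemble the pieces: strict monotonicity of $g$ forces the zero $p_0$ to be unique, and gives $g(p) < g(p_0) = 0$ for $p \in [0,p_0)$ and $g(p) > 0$ for $p \in (p_0,\tfrac12]$, which is exactly the claimed inequality $J(1-p) < 2J\bigl(\tfrac{1-p}{2}\bigr)$ for $p \in [0,p_0)$ and $J(1-p) > 2J\bigl(\tfrac{1-p}{2}\bigr)$ for $p \in (p_0,\tfrac12]$. The only delicate point, as flagged, is making sure every invocation of Lemma~\ref{lemino} and of the monotonicity of $J$ is on the correct half of the line; using the symmetry $J(t) = J(1-t)$ to fold everything into $[0,\tfrac12]$ resolves this. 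One should also double-check that $g(\tfrac12)>0$ is strict, which it is, since Lemma~\ref{lemino} asserts $t\mapsto J(t)/t$ is \emph{strictly} increasing.
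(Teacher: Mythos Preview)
Your proof is correct and follows essentially the same approach as the paper: define $g(p)=J(1-p)-2J((1-p)/2)$, check $g(0)<0$, use Lemma~\ref{lemino} for $g(1/2)>0$, and conclude by continuity and monotonicity of $g$. The paper simply asserts that $g$ is increasing (after rewriting $2J((1-p)/2)=2J((1+p)/2)$ via symmetry), whereas you supply the details by folding both arguments into $[0,\tfrac12]$ and using the monotonicity of $J$ there; your slight detour through the reparametrization $s=(1-p)/2$ is unnecessary but harmless.
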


\begin{remark}
In general $p_0$ is known only implicitly. However, in the case of the Cauchy measure $m_\alpha$ defined in \eqref{mis 1}, one easily sees that $p_{0}=\frac{1}{1+2^{1/(1+\alpha)}}$.
\end{remark}

\begin{proof}
Let us consider the auxiliary function $g(p)=J(1-p)-2J(\frac{1-p}{2})=J(1-p)-2J(\frac{1+p}{2})$ for $ p \in [0,\frac{1}{2}]$.
We observe that $g$ is continuous, increasing and $g(0)=-2J_{\mu
}(\frac{1}{2})<0$. Moreover, Lemma \ref{lemino} guarantees that $g(\frac{1}{2})=\frac{1}{2}\left(\frac{J(1/2)}{1/2}-\frac{J(1/4)}{1/4}\right)>0$. Hence the result.
\end{proof}

Introduce the following notation.
\begin{equation}
\alpha_{p}=-F^{-1}\left(\frac{1-p}{2}\right), \qquad  \sigma_{p}=-F^{-1}(p), \qquad  p \in [0,1]. \label{alfa/sigma}
\end{equation}

We are now in position to state the corollary.

\begin{corollary}[Extremal sets in the isoperimetric problem for intervals]\label{Prop intervalli}
Let $\mu \in \mathcal{F}$ and $p_0$ defined in Lemma \ref{def:p0}.
Let us fix $a,b$ with $-\infty\leq a<b\leq+\infty$ and set $p=\mu\left((a,b)\right)$.
Then, %there exists $p_{0}\in\left(  0,\frac{1}{2}\right)$, univocally determined  by $J(1-p_0)=2J((1-p_0)/2)$, such that
\begin{enumerate}[i)]
\item
if $p> p_{0}$,
\begin{equation} \label{isop intervalli 1}%
P\left( (  a,b)\right)  \geq P\left( ( -\alpha_{p},\alpha_{p})\right),
\end{equation}
with equality iff $\left(a,b\right)=\left(-\alpha_{p},\alpha_{p}\right)$;
\item
if $p < p_{0}$,
\begin{align} \label{isop int 2}
P\left((a,b)\right)
& \geq P\left((-\infty,-\sigma_{p})\right)  \\
& (= P\left((\sigma_{p},+\infty))\right),
\end{align}
with equality iff $\left(a,b\right)=\left(-\infty,-\sigma_{p}\right)$ or
 $\left(a,b\right)=\left(\sigma_{p},+\infty\right)$;
 \item if $p=p_0$,
 $$
 P\left((  a,b)\right)  \geq P\left((  -\alpha_{p},\alpha
_{p})\right) = P\left((-\infty,-\sigma_{p})\right)=P\left((\sigma_{p},+\infty)\right)
 $$
 with equality iff $\left(a,b\right)$ equals $\left(-\alpha_{p},\alpha_{p}\right)$,
 $\left(-\infty,-\sigma_{p}\right)$ or
 $\left(\sigma_{p},+\infty\right)$.
\end{enumerate}
\end{corollary}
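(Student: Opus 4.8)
The plan is to reduce the problem to the study of the single-variable function $\psi_p$ introduced in Lemma \ref{lemino2}, exactly as in the proof of Proposition \ref{shifting}. Writing $a=F^{-1}(t)$ with $t\in[0,1-p]$ (and $b=F^{-1}(p+t)$ forced by the constraint $\mu((a,b))=p$), we have $P((a,b))=J(t)+J(p+t)=\psi_p(t)$, so the corollary amounts to locating the global minimum of $\psi_p$ on $[0,1-p]$ and identifying where it is attained. The two candidate configurations correspond to distinguished values of $t$: the centred interval $(-\alpha_p,\alpha_p)$ is $t=\tfrac{1-p}{2}$ (by symmetry of $F$ around $0$, $F(-\alpha_p)=\tfrac{1-p}{2}$), while the half-lines $(-\infty,-\sigma_p)$ and $(\sigma_p,+\infty)$ are the endpoints $t=0$ and $t=1-p$.

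First I would treat the case $p\geq 1/2$: by Lemma \ref{lemino2}(ii), $\psi_p$ is strictly convex, decreasing then increasing, with its unique minimum at $t=\tfrac{1-p}{2}$ and maxima at the endpoints. Hence $P((a,b))\geq P((-\alpha_p,\alpha_p))$ with equality iff $t=\tfrac{1-p}{2}$, i.e.\ iff $(a,b)=(-\alpha_p,\alpha_p)$; since $p\geq 1/2>p_0$ this is part of case (i). For $p<1/2$, Lemma \ref{lemino2}(iii) tells us $\psi_p$ has local minima at $t=0$, $t=\tfrac{1-p}{2}$ and $t=1-p$, and local maxima at $t=\tfrac12-p$ and $t=\tfrac12$. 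By symmetry of $\psi_p$ about $\tfrac{1-p}{2}$, the two endpoint values agree: $\psi_p(0)=\psi_p(1-p)=J(1-p)$ (using $J(0)=0$), while $\psi_p\bigl(\tfrac{1-p}{2}\bigr)=2J\bigl(\tfrac{1-p}{2}\bigr)$. The global minimum is therefore the smaller of $J(1-p)$ and $2J\bigl(\tfrac{1-p}{2}\bigr)$, and this is precisely the dichotomy encoded by Lemma \ref{def:p0}: for $p\in(p_0,1/2)$ one has $J(1-p)>2J\bigl(\tfrac{1-p}{2}\bigr)$, so the centred interval wins (case (i)); for $p\in[0,p_0)$ one has $J(1-p)<2J\bigl(\tfrac{1-p}{2}\bigr)$, so the half-lines win (case (ii)); and at $p=p_0$ the two values coincide, giving case (iii).

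The only remaining point is the \emph{equality characterization}, which is where a little care is needed. Strict convexity of $\psi_p$ on each of the three subintervals $(0,\tfrac12-p)$, $(\tfrac12-p,\tfrac12)$, $(\tfrac12,1-p)$ guarantees that on each such piece $\psi_p$ attains its minimum only at the relevant endpoint, so the only points where the global minimum value can be achieved are among $\{0,\tfrac{1-p}{2},1-p\}$. Thus for $p>p_0$ equality forces $t=\tfrac{1-p}{2}$ and hence $(a,b)=(-\alpha_p,\alpha_p)$; for $p<p_0$ equality forces $t\in\{0,1-p\}$, i.e.\ $(a,b)=(-\infty,-\sigma_p)$ or $(\sigma_p,+\infty)$ (here I would note that $F^{-1}(0)=-\infty$ and $F^{-1}(1)=+\infty$ since $a=-\infty$, $b=+\infty$ for $\mu\in\mathcal F$); and at $p=p_0$ all three are extremal. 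I expect the main obstacle to be purely bookkeeping: making sure the endpoint conventions $J(0)=J(1)=0$ and $F^{-1}(0)=-\infty$, $F^{-1}(1)=+\infty$ are handled cleanly so that the half-line configurations really do correspond to the endpoints of $[0,1-p]$, and that no spurious equality case is introduced at the interior local minima $\tfrac{1-p}{2}$ when it is \emph{not} the global one. Everything else is a direct read-off from Lemma \ref{lemino2} and Lemma \ref{def:p0}.
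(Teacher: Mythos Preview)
Your proposal is correct and follows essentially the same approach as the paper: both reduce the perimeter of an interval of measure $p$ to the function $\psi_p(t)=J(t)+J(p+t)$, invoke Lemma~\ref{lemino2} to locate the local minima at $t\in\{0,\tfrac{1-p}{2},1-p\}$, and then use Lemma~\ref{def:p0} to decide which of $J(1-p)$ and $2J(\tfrac{1-p}{2})$ is the global minimum. The only cosmetic difference is that for $p\geq 1/2$ the paper cites Proposition~\ref{shifting}(2) rather than Lemma~\ref{lemino2}(ii), but since the former is proved from the latter this amounts to the same argument.
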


\begin{proof}
If $p \geq 1/2$, the result of Point $(i)$ immediately follows
from Proposition \ref{shifting} Point $(2)$.

Hence, we need to deal with $0 \leq p<\frac{1}{2}$.
As in the proof of Proposition \ref{shifting}, we have
$$
P\left((a,b)\right)=J(F(a))+J(p+F\left(a\right)) = \psi_p(F(a)),
$$
where $\psi_p$ has been defined in Lemma \ref{lemino2}. Thanks to Lemma \ref{lemino2},
we need to compare the two minima of $\psi_p$, namely $\psi_p(1-p)=J(1-p)=P\left((-\infty,-\sigma_{p})\right)
=P\left((\sigma_{p},+\infty)\right)$ and
$\psi_p(\frac{1-p}{2})=2J(\frac{1-p}{2})=P\left((-\alpha_{p},\alpha_{p})\right)$.
%Let us consider the auxiliary function
%$g(p)=J(1-p)-2J(\frac{1-p}{2})$ for $ p \in [0,\frac{1}{2}]$.
%Then, we observe that $g$ is continuous, (strictly) increasing and $g(0)=-2J_{\mu
%}(\frac{1}{2})<0$. Moreover, Lemma \ref{lemino} guarantees that $g(\frac{1}{2})=\frac{1}{2}\left(\frac{J(1/2)}{1/2}-\frac{J(1/4)}{1/4}\right)>0$. Hence, there exists a unique point $p_{0}\in\left(0,\frac{1}{2}\right)$ such that
%$g(p_{0})=0$.

By definition of $p_0$ (see Lemma \ref{def:p0}), we conclude that $a \mapsto \psi_{p}(F(a))$
has a unique global minimum at $a=F^{-1}\left(\frac{1-p}{2}\right)$ if $0<p< p_{0}$
and has two minima at $a=F^{-1}\left(0\right)$
and $a=F^{-1}\left(1-p\right)$ if $p_{0}< p<\frac{1}{2}$.
This ends the proof of the inequalities.
Equality cases follow at once from the strict monotonicity of $\psi_p$.
\end{proof}

\begin{remark}
Corollary \ref{Prop intervalli} implies that
\[
P\left((  a,b)\right)\geq\min\left\{  2J\left(  \frac{1-p}{2} \right),J(p)\right\}  ,
\qquad\quad
p\in\left[0,1\right]  .
\]
\end{remark}

%\begin{remark}
%\label{remark unicita}(Equality cases) The strictly convexity of $\log f_{\mu
%}$ in $\left(  0,+\infty\right)  $ allows us to discuss the uniqueness in the
%equality cases of (\ref{isop intervalli}) and (\ref{isop int 2}), because we
%have the strictly monotonicity of function $P_{p}(a)$ defined in
%(\ref{perimetro funzione di a}). More precisely
%
%\noindent i) if $p>p_{0}$ equality holds in (\ref{isop intervalli}) if and
%only if $E=\left(  -\alpha_{p},\alpha_{p}\right)  ,$
%
%\noindent ii) if $p<p_{0}$ equality holds in (\ref{isop int 2}) if and only if
%$E=\left(  -\infty,-\sigma_{p}\right)  $ or $\left(  \sigma_{p},+\infty
%\right)  .$
%
%\noindent Finally for $p=p_{0}$ the two shapes give equality in
%(\ref{isop intervalli}) and (\ref{isop int 2}).
%
%\noindent In the case of the Cauchy measure $p_{0}=\frac{2^{-\frac{1}%
%{1+\alpha}}}{1+2^{-\frac{1}{1+\alpha}}}.$
%\end{remark}

\begin{remark}[Isoperimetric problem for complements of an interval]\label{prop 2 semirette}
\hspace{0cm}\\
Observing that intervals and complement of intervals have the same perimeter, \textit{i.e.}\ that
$P\left(  \left(  -\infty,a\right)  \cup\left(b,+\infty\right)  \right) =P\left(  \left(a,b\right)\right)$ one can
easily solve, using Corollary \ref{Prop intervalli}, the isoperimetric problem among sets of prescribed measure, that are
complements of an interval. More precisely, one obtains the following (details are left to the reader):
Fix $a,b$ with $-\infty\leq a<b\leq+\infty$ and set $p=\mu\left(\left(-\infty,a\right)  \cup\left(  b,+\infty\right)\right)$.
Then (using notations of Corollary \ref{Prop intervalli}),
\begin{enumerate}[i)]
\item
if $p>1-p_{0}$,
\begin{align} \label{isop due semirette}
P\left(( -\infty,a)  \cup( b,+\infty)\right)
& \geq
P\left((-\infty,\sigma_{1-p})\right) \\
& (=
P\left((-\sigma_{1-p},+\infty)\right)) \nonumber
\end{align}
with equality iff $\left( -\infty,a)  \cup( b,+\infty)\right) = \left(-\infty,\sigma_{1-p}\right)$
or $\left( -\infty,a\right)  \cup\left( b,+\infty\right)=\left(-\sigma_{1-p},\infty\right)$;
\item if $p< 1-p_{0}$,
\begin{equation}  \label{isop compl 2}
 P\left((-\infty,a)  \cup (b,+\infty) \right)
 \geq
 P\left((-\infty,-\alpha_{1-p})  \cup (\alpha_{1-p},+\infty)\right)
\end{equation}
%where
%\begin{equation} \label{beta p}
%\beta_{p}=-F_{\mu}^{-1}\left(  \frac{p}{2}\right),
%\end{equation}
with equality iff $\left(-\infty,a\right)  \cup\left(b,+\infty\right)=\left(-\infty,-\alpha_{1-p}\right)  \cup \left(\alpha_{1-p},+\infty\right)$;
\item if $p=1-p_0$,
\begin{align*}
P\left((-\infty,a)  \cup(b,+\infty)\right)
& \geq
 P\left((-\infty,-\alpha_{1-p})  \cup (\alpha_{1-p},+\infty)\right) \\
& =
 P\left((-\infty,\sigma_{1-p})\right)
=
P\left((-\sigma_{1-p},+\infty)\right)
\end{align*}
with equality iff $\left(-\infty,a\right)  \cup\left(b,+\infty\right)$ equals $\left(-\infty,\sigma_{1-p}\right)$,
$\left(-\infty,\sigma_{1-p}\right)$ or the set $\left(-\infty,-\alpha_{1-p}\right)  \cup \left(\alpha_{1-p},+\infty\right)$.
\end{enumerate}
\end{remark}

%%%%%%%%%%%%%%%%%%%%%%%%%%%%%%%%%%%%%%%%%%%%%%%%%%%%%%%%%%%%%%%%%%%%%%%%%%%%%%%%%%%%%%%%%%%%%%%%%%%%
%%%%%%%%%%%%%%%%%%%%%%%%%%%%%%%%%%%%%%%%%%%%%%%%%%%%%%%%%%%%%%%%%%%%%%%%%%%%%%%%%%%%%%%%%%%%%%%%%%%%
%%%%%%%%%%%%%%%%%%%%%%%%%%%%%%%%%%%%%%%%%%%%%%%%%%%%%%%%%%%%%%%%%%%%%%%%%%%%%%%%%%%%%%%%%%%%%%%%%%%%
%%%%%%%%%%%%%%%%%%%%%%%%%%%%%%%%%%%%%%%%%%%%%%%%%%%%%%%%%%%%%%%%%%%%%%%%%%%%%%%%%%%%%%%%%%%%%%%%%%%%

\subsection{Isoperimetric inequality for strictly log-convex probability measures}

\hspace{0,1cm}\\ From the results of the previous sections, we can now solve the isoperimetric problem for
strictly log-convex probability measures.

In what follows we need to recall the definition of $\alpha_p$ and $\sigma_p$ given in (\ref{alfa/sigma}). For simplicity, we set also
$\beta_p=\alpha_{1-p}$, for $p \in [0,1]$.

\begin{theorem}[Isoperimety for strictly log-convex probability measures] \label{th generale}%\textit{(Extremal sets in the isoperimetric problem for set of finite perimeter) }
Let $\mu \in \mathcal{F}$% and $p_0$ be its isoperimetric threshold.
and $E$ be a Borel set of $\mathbb{R}$ with measure $\mu(E)=p$.
Then
\begin{enumerate}[i)]

\item if $p<1/2$,
\begin{equation} \label{isop 2}%
P(E)\geq P\left((  -\infty,-\beta_{p})  \cup(
\beta_{p},+\infty)\right),
\end{equation}
with equality iff
$E=\left(-\infty,-\beta_{p})  \cup(  \beta_{p},+\infty)\right)$;

\item if $p>1/2$,
\begin{equation}
P(E)\geq P\left((  -\alpha_{p},\alpha_{p})\right)  \label{isop 1}
\end{equation}
with equality iff  $E=\left(-\alpha_{p},\alpha_{p}\right)$;

\item if $p=\frac{1}{2}$
\begin{equation}
P(E)\geq P\left((  -\infty,-\beta_{p})  \cup(
\beta_{p},+\infty)\right)  =P\left((  -\alpha_{p},\alpha_{p})\right)
\label{isop 3}%
\end{equation}
with equality iff  $E$ equals $\left(-\infty,-\beta_{p}\right)\cup\left(\beta_{p},+\infty\right)$ or
$\left(-\alpha_{p},\alpha_{p}\right).$
\end{enumerate}
\end{theorem}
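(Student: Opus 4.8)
The plan is to reduce the isoperimetric problem for arbitrary Borel sets to the isoperimetric problem for intervals and complements of intervals, which has already been settled in Corollary \ref{Prop intervalli} and Remark \ref{prop 2 semirette}. First I would recall the classical structure result on the line: any Borel set $E$ of finite perimeter can be written, up to a set of $\mu$-measure zero, as a finite or countable disjoint union of open intervals; if $P_\mu(E)<\infty$ this union is in fact finite, say $E=\bigcup_{i=1}^{N}(a_i,b_i)$ with $\mathcal{H}^0$-a.e.\ boundary points $a_i,b_i$ accounted for. Then $P_\mu(E)=\sum_i f(a_i)+f(b_i)$ counting only the finite endpoints. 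The key reduction step is: replacing $E$ by a single interval or a single complement of an interval of the same measure never increases the perimeter.

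The heart of the argument is a "merging" step. Given $E$ with two or more connected components, I would show that one can strictly decrease the perimeter (or leave it unchanged only in degenerate cases) by suitably moving the components together while preserving $\mu(E)=p$, until one is left with either one interval or the complement of one interval. Concretely, one orders the components from left to right and uses the shifting property (Proposition \ref{shifting}) to push a component toward its neighbour; when two components touch they merge into one, dropping two boundary points and adding back at most one, which never increases $P_\mu$. A clean way to organize this is: among all sets of measure $p$ that are finite unions of at most $N$ intervals (and their complements), the infimum of the perimeter is attained, and at a minimizer no "internal" configuration with $\ge 2$ components can occur because the shifting lemma would contradict minimality. Hence the minimizer is a single interval or a single complement of an interval, and then Corollary \ref{Prop intervalli} together with Remark \ref{prop 2 semirette} identifies it exactly. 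Comparing the two candidate values $2J\!\left(\frac{1-p}{2}\right)$ (the symmetric interval, resp.\ its complement) and $J(p)$ (the half-line, resp.\ its complement), and invoking Lemma \ref{def:p0} to decide which is smaller, yields the three regimes. The crucial observation for the final theorem is the bookkeeping of which shape wins: for a set of measure $p<1/2$ one must compare it with intervals of measure $p$ \emph{and} complements of intervals of measure $p$; the latter are complements of intervals of measure $1-p>1/2$, and Remark \ref{prop 2 semirette}\,(ii) (applied with $1-p$ in place of $p$, noting $1-p<1-p_0$ fails exactly when $p>p_0$, etc.) shows the minimal such complement is $(-\infty,-\beta_p)\cup(\beta_p,+\infty)$ with perimeter $2J\!\left(\frac{p}{2}\right)$... more precisely with $\beta_p=\alpha_{1-p}=-F^{-1}\!\left(\frac{p}{2}\right)$, hence perimeter $2J\!\left(\frac{p}{2}\right)$, and by Lemma \ref{lemino} (strict monotonicity of $t\mapsto J(t)/t$) one checks $2J(\frac{p}{2})<J(p)<2J(\frac{1-p}{2})$ for $p<1/2$, so the complement of the symmetric interval is the global minimizer.

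I would then dispatch the three cases. For $p<1/2$: the best interval of measure $p$ has perimeter $\min\{2J(\frac{1-p}{2}),J(p)\}$ by Corollary \ref{Prop intervalli}, the best complement-of-interval of measure $p$ has perimeter $2J(\frac{p}{2})$ by Remark \ref{prop 2 semirette}, and $2J(\frac{p}{2})$ is strictly smaller than both $J(p)$ and $2J(\frac{1-p}{2})$ by Lemma \ref{lemino}; since the general minimizer is one of these two shapes, \eqref{isop 2} follows, with equality only for $E=(-\infty,-\beta_p)\cup(\beta_p,+\infty)$. For $p>1/2$: by symmetry ($E\mapsto E^c$, $P_\mu(E)=P_\mu(E^c)$, $\mu(E^c)=1-p<1/2$) the minimizer is the complement of $(-\infty,-\beta_{1-p})\cup(\beta_{1-p},+\infty)$, i.e.\ the symmetric interval $(-\alpha_p,\alpha_p)$, giving \eqref{isop 1}. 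For $p=1/2$: both candidates have the same measure and, by the symmetry of $J$ about $1/2$ and the relation $\beta_{1/2}=\alpha_{1/2}$... actually $\beta_{1/2}=\alpha_{1/2}$ need not hold, but a direct computation gives $P((-\alpha_{1/2},\alpha_{1/2}))=2J(1/4)$ and $P((-\infty,-\beta_{1/2})\cup(\beta_{1/2},+\infty))=2J(1/4)$ as well, so the two minima coincide and both sets are extremal, which is \eqref{isop 3}.

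\textbf{Main obstacle.} The delicate point is the merging/reduction step: justifying rigorously that a perimeter-minimizing Borel set of measure $p$ may be taken to be a finite union of intervals, that this minimum is attained, and that the shifting property forces the number of components down to one (for an interval-type minimizer) or up to "one complement" — handling carefully the boundary cases where a component abuts $+\infty$ or $-\infty$, where shifting would push a finite endpoint to infinity and change the topological type of the set. One must also be careful that the shifting property as stated in Proposition \ref{shifting} only applies to shifts that keep $a'+b'$ on the correct side of $0$; near the "turning points" one has to switch between left- and right-shifts, which is exactly why the three sub-regimes of Lemma \ref{lemino2} appear. Once the reduction to single intervals / single complements is in hand, the rest is just comparing the finitely many explicit candidate values using Lemmas \ref{lemino} and \ref{def:p0}.
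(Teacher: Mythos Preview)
Your plan is essentially the paper's own approach: reduce an arbitrary finite-perimeter set, via the shifting property, to a single interval or a single complement of an interval, then invoke Corollary~\ref{Prop intervalli} and Remark~\ref{prop 2 semirette}, and finish by comparing $2J(p/2)$, $J(p)$ and $2J\!\left(\frac{1-p}{2}\right)$ with Lemma~\ref{lemino}. Your endgame computations for the three regimes are correct.

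Two points deserve comment. First, a small technical error: it is \emph{not} true that $P_\mu(E)<\infty$ forces the decomposition $E=\bigcup_h(a_h,b_h)$ to be finite. Since $f(x)\to 0$ as $|x|\to\infty$ for $\mu\in\mathcal F$, one can have countably many components accumulating at infinity with $\sum_h (f(a_h)+f(b_h))<\infty$. The paper works directly with the countable decomposition and iterates the two-interval merging; your compactness argument on ``at most $N$ intervals'' would need an additional limiting step in $N$.

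Second, regarding what you correctly flag as the main obstacle --- reducing to one interval or one complement-of-interval --- the paper resolves this not by an abstract minimizer argument but by a concrete case split on whether $0\in E$. If $0\notin E$, shifting each positive-side component to $+\infty$ and each negative-side component to $-\infty$ (Proposition~\ref{shifting}(1a)) yields directly a set of the form $(-\infty,-a)\cup(b,+\infty)$. If $0\in(a_{h_o},b_{h_o})$ for some component, the same shifting leaves three pieces $(-\infty,-\bar a)\cup(a_{h_o},b_{h_o})\cup(\bar b,+\infty)$; the paper then passes to the \emph{complement} $(-\bar a,a_{h_o})\cup(b_{h_o},\bar b)$, whose two components each lie on one side of $0$ and have measure $<1/2$, shifts them outward to half-lines, and takes the complement back to obtain a single interval $(-\alpha,\beta)$. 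This complementation trick is the clean way to handle exactly the ``turning point'' difficulty you anticipate, and it sidesteps any compactness or attainment issues.
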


\begin{remark}
The measure $1/2$ can be seen as an \emph{isoperimetric threshold}, in the sense that extremal sets
move from complement of symmetric intervals (when $p<1/2$) to symmetric intervals (when $p>1/2$).
\end{remark}

\begin{proof}
Let $E$ be a Borel set with measure $p=\mu(E)$.
Without loss of generality we can assume that $E$ has finite perimeter.
%We need to show that
%$P(E)>P\left((-\infty,-\beta_{p}\right)  \cup\left(  \beta_{p},+\infty)\right)$
%unless $E=\left(  -\infty,-\beta_{p}\right) \cup\left(  \beta_{p},+\infty\right)$.

We begin with a simple remark. Assume that
$E=\left(  a_{1},b_{1}\right)\cup \left(a_{2},b_{2}\right)$ with
$a_{1}<b_{1}<a_{2}<b_{2}$ and $a_{1}>0$. Then, consider the right shifted interval
$(a'_1,b'_1)$ of $(a_1,b_1)$, with $b'_1=a_2$ so that, thanks to Proposition \ref{shifting},
$$
P(E) \geq P \left( \left(  a_{1}^{\prime},b_{1}^{\prime}\right)
\cup \left(  a_{2},b_{2}\right) \right) \geq P\left(\left(  a_{1}^{\prime}
,b_{2}\right)  \right) .
$$
In conclusion, we get that, given two disconnected intervals contained in $(0,\infty)$, the perimeter decreases
by moving (and gluing) the left most interval toward the right most one. Clearly, the same property holds for $a_{1}<b_{1}<a_{2}<b_{2}<0$ by symmetry.

Now, let $E$ be any set of finite $\mu-$perimeter. From standard measure Theory (see \textit{e.g.}\ \cite[Proposition 3.52]{libro-fusco})
%{\color{green}{-- Cyril: dovremo sicuramente mettere una referenza qui (che io non cosnosco) --}}
there exists a countable
set $H$ such that, up to a set of measure zero,
$E=\underset{h\in H}{\cup}\left(  a_{h},b_{h}\right)$, where $-\infty\leq a_{h}<b_{h}\leq+\infty$ and
$\mathrm{dist}\left(E \setminus \left( a_{h},b_{h}\right),\left(a_{h},b_{h}\right)\right)>0$ for all $h\in H$.
Without loss of generality we can assume that the set of measure zero is the empty set.
Then, iterating the remark above (for two intervals), we obtain that, either
\[
P(E)\geq P\left((  -\infty,-a)  \cup(b,+\infty)\right)
\]
 if $0 \notin E$, or, if $0 \in\left(a_{h_o},b_{h_o}\right)$ for some $h_o \in H$,
\[
P(E)\geq P\left((  -\infty,-\overline{a})\cup(  a_{h_o},b_{h_o})
\cup (  \overline{b},+\infty)\right),
\]
where $a,\bar a, b,\bar b \in (0,\infty]$ and $\left( -\infty,-a\right)
\cup\left(b,+\infty\right)$ and $\left(-\infty,-\overline{a}\right)
\cup\left(  a_{h_o},b_{h_o}\right) \cup\left(\overline{b},+\infty\right)$ are
sets with measure $p=\mu(E)$.
%Moreover, inequalities in $(I)$ and $(II)$ are strict unless
%$E=\left(-\infty,A\right)  \cup\left(B,+\infty\right)$, respectively $E=\left(  -\infty,\overline{A}\right)  \cup\left(
%a_{h_o},b_{h_o}\right)\cup\left(\overline{B},+\infty\right)$.
Here and below we use the convention that
$(\infty,\infty)=(-\infty,-\infty)=\emptyset$.
In the second case (\textit{i.e.}\ when $0 \in E$), we continue the reduction by considering the complementary set
$$
\left[\left(  -\infty,-\overline{a}\right)
\cup\left(  a_{h_o},b_{h_o}\right)  \cup\left(  \overline{b},+\infty\right)\right]^c
= [-\overline{a},a_{h_o}] \cup [b_{h_o}, \bar b],
$$
which has the same measure and same perimeter as $\widehat{E}:=(-\overline{a},a_{h_o}) \cup (b_{h_o}, \bar b)$ that we will deal with.
By construction and since $\mu$ is symmetric, necessarily
$\mu((-\overline{a},a_{h_o})) < 1/2$ and
$\mu((b_{h_o}, \bar b)) < 1/2$. Hence, thanks to the shifting property of Proposition \ref{shifting} (Point (1)),
$$
P\left( (-\overline{a},a_{h_o}) \right) \geq P\left( (-\infty,-\alpha) \right)
$$
and
$$
P\left( (b_{h_o}, \bar b) \right) \geq P\left( (\beta,\infty) \right),
$$
where $\alpha, \beta \in (0,\infty]$ are such that $\mu\left( (-\infty,-\alpha) \right)= \mu \left( (-\overline{a},a_{h_o}) \right)$ and  $\mu\left( (\beta,\infty) \right)=\mu\left( (b_{h_o}, \bar b) \right)$.
Going back to the complementary set, we end up with the following bound
\begin{align*}
P(E)
& \geq
P(\widehat{E})
 \geq
P \left( (-\infty,-\alpha) \cup (\beta,\infty)\right)
=
P \left( (-\alpha,\beta)\right)
\end{align*}
with $\mu\left( (-\alpha,\beta)\right)=p$.

As a summary, after few reductions, we obtained the following two cases: either
$$
(I)\quad P(E)\geq P\left((  -\infty,-a)  \cup(
b,+\infty)\right) \text{  if } 0 \notin E
$$
 or
$$
(II)\quad P(E)\geq P \left( (-\alpha,\beta)\right) \text {  if } 0 \in E,
$$
 where $\mu\left((  -\infty,-a)  \cup(b,+\infty)\right)  = \mu \left( (-\alpha,\beta)\right)=\mu(E)=p$
and $a, \alpha, b,\beta \in (0,\infty]$.

%Now we distinguish between Point $i)$, $ii)$, $iii)$ and cases $(I)$ and $(II)$ and deal with them separately.

\medskip

Now assume that $p \in(0,1/2]$. We distinguish between cases $(I)$ and $(II)$.

\textit{Case (I).} Applying Remark \ref{prop 2 semirette} Point $ii)$ (observe that, since $p \leq 1/2$, ne\-ces\-sa\-ri\-ly $p < 1-p_0$, where $p_0$ is defined in Lemma \ref{def:p0}),
the perimeter decreases if we consider the symmetric set
$\left(-\infty,-\alpha_{1-p}\right) \cup\left(\alpha_{1-p},+\infty\right)$,
unless $E=\left(-\infty,-\alpha_{1-p}\right)  \cup\left(  \alpha_{1-p},+\infty\right)$,
where we recall that $\alpha_{1-p}=-F^{-1}(p/2)=\beta_p$.

As a conclusion, in case $(I)$,
$P(E) \geq P\left(\left(-\infty,-\beta_p\right)  \cup\left(  \beta_p,+\infty\right)\right)$.

\textit{Case (II).}
Corollary \ref{Prop intervalli} (Point $ii)$) guarantees that
\begin{align*}
P(E)
& \geq
P \left( (-\alpha,\beta)\right)
\geq
P \left( (-\infty,-\sigma_{p})\right) = J(p)
\end{align*}
(where $\sigma_p=-F^{-1}(p)$). Lemma \ref{lemino} implies that,
for $p \in [0,\frac{1}{2}]$, $J(p) \geq 2J(\frac{p}{2})=P \left((-\infty,-\beta_p)  \cup(  \beta_p,+\infty)\right)$.
The inequality of Point $i)$ follows. Keeping track of the equality cases in the various steps above leads
to the desired result of Point $i)$ and $iii)$

Finally, point $ii)$ is an easy consequence of Point $i)$ considering the complementary set.
\end{proof}

\begin{remark} \label{explicit}
Notice that, if $\mu$ is not strictly log-convex then equality cases in the above theorem no longer hold (see next subsection 3.4).

Also, observe that Theorem \ref{th generale}\ gives the following explicit expression of the isoperimetric
profile, recovering \cite{bob-hudre} for strictly log-convex probability measures,
$$
I(p)
%=
%2\min\left\{  J\left(  \frac{1}{2}-\frac{p}{2}\right)
%,J\left(  \frac{p}{2}\right)  \right\}
=
2J\left(  \frac{1}{2}%
\min\left(  p,1-p\right)  \right),
\qquad p\in\left[  0,1\right]  .
$$
\end{remark}

%%%%%%%%%%%%%%%%%%%%%%%%%%%%%%%%%%%%%%%%%%%%%%%%%%%%%%%%%%%%%%%%%%%%%%%%%%%%%%%%%%%%%%%%%%%%%%%%%%%%
%%%%%%%%%%%%%%%%%%%%%%%%%%%%%%%%%%%%%%%%%%%%%%%%%%%%%%%%%%%%%%%%%%%%%%%%%%%%%%%%%%%%%%%%%%%%%%%%%%%%
%%%%%%%%%%%%%%%%%%%%%%%%%%%%%%%%%%%%%%%%%%%%%%%%%%%%%%%%%%%%%%%%%%%%%%%%%%%%%%%%%%%%%%%%%%%%%%%%%%%%
%%%%%%%%%%%%%%%%%%%%%%%%%%%%%%%%%%%%%%%%%%%%%%%%%%%%%%%%%%%%%%%%%%%%%%%%%%%%%%%%%%%%%%%%%%%%%%%%%%%%

\subsection{Example of the two-sided exponential measure}

In this subsection, we brie\-fly deal with an example of non strictly log-convex probability measure,
the two-side exponential measure defined in \eqref{misura 0}.
It is a symmetric probability measure, log-convex \emph{and} log-concave,
with $J(t)=\min\left(  t,1-t\right)$.

The perimeter $P_{\mu_{1}}\left(( a,b)\right)$ of an interval $\left(a,b\right)$
of fixed measure $p$ can be explicitly computed. If $p\geq\frac{1}{2}$ then
$P_{\mu_{1}}\left((a,b)\right)=1-p$.
In particular, we stress that all intervals of measure bigger than $1/2$
have the same perimeter. If $p<\frac{1}{2}$ we have
\[
P_{\mu_{1}}\left((a,b)\right) = P_{p}(a)=\left\{
\begin{array}
[c]{lll}%
2F\left(  a\right)  +p & \text{if} & -\infty\leq a\leq F^{-1}\left(
\frac{1}{2}-p\right),  \\
&  & \\
1-p & \text{if} & F^{-1}\left(  \frac{1}{2}-p\right)  \leq a\leq
0,\\
&  & \\
2-2F\left(  a\right)  -p & \text{if} & 0  \leq a\leq
F^{-1}\left(  1-p\right).
\end{array}
\right.
\]
Therefore among the intervals $\left(  a,b\right)  $ of measure $p<\frac{1}%
{2}$ the half-lines have minimal perimeter. Moreover the shifting property for
interval is the following:

\begin{proposition}
If\textit{ }$\left(  a,b\right)  $\textit{ is an interval of measure such that
}$\mu_{1}\left(  \left(  a,b\right)  \right)  <\frac{1}{2}$,  then
\[
P_{\mu_{1}}\left((  a,b)\right)>P_{\mu_{1}}\left((  a^{\prime},b^{\prime})\right)
\]
for any left-shifted (right-shifted) interval of $\left(  a,b\right)  $ with
$b^{\prime}\leq0$ ($a^{\prime}\geq0$). Otherwise%
\[
P_{\mu_{1}}\left((  a,b)\right)=P_{\mu_{1}}\left((  a^{\prime},b^{\prime})\right)
\]
for any right-shifted or left-shifted interval of $\left(  a,b\right)  .$
\end{proposition}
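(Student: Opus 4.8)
The plan is to reduce the computation, exactly as in the proof of Proposition~\ref{shifting}, to a single one-variable function. Put $p := \mu_1((a,b)) \in (0,\tfrac12)$; since $P_{\mu_1}((a,b)) = J(F(a)) + J(F(b))$ and $F(b) = p + F(a)$, the $\mu_1$-perimeter of an interval of measure $p$ is $\psi_p(F(a))$, where
\[
\psi_p(t) := J(t) + J(p+t), \qquad t \in [0,1-p].
\]
For the two-sided exponential measure $J(t) = \min(t,1-t)$ is affine with slope $+1$ on $[0,\tfrac12]$ and slope $-1$ on $[\tfrac12,1]$, so I would just split $[0,1-p]$ at $t = \tfrac12 - p$ and $t = \tfrac12$ and evaluate: $\psi_p(t) = 2t+p$ on $[0,\tfrac12-p]$, $\psi_p(t) \equiv 1-p$ on $[\tfrac12-p,\tfrac12]$, and $\psi_p(t) = 2-p-2t$ on $[\tfrac12,1-p]$. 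This is precisely the three-line expression for $P_p(a)$ displayed just before the Proposition, read through $t = F(a)$; in particular $\psi_p$ is \emph{strictly} increasing on $[0,\tfrac12-p]$, constant and equal to its maximum $1-p$ on $[\tfrac12-p,\tfrac12]$, and \emph{strictly} decreasing on $[\tfrac12,1-p]$.

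Second, I would record the elementary dictionary between the position of an interval and the value $t = F(a)$. As $F$ is an increasing bijection of $\mathbb{R}$ onto $(0,1)$ with $F(0) = \tfrac12$, one has $b' \le 0 \iff F(a') \le \tfrac12 - p$ and $a' \ge 0 \iff F(a') \ge \tfrac12$, while passing to a left-shifted (resp. right-shifted) interval of $(a,b)$ amounts to decreasing (resp. increasing) $t = F(a)$ (indeed, equality of the measures together with $b'<b$ forces $F(a')<F(a)$). Note that the flat middle branch $\{t \in [\tfrac12-p,\tfrac12]\}$ corresponds exactly to the intervals with $a \le 0 \le b$, which recovers the observation that every interval of measure $p < \tfrac12$ containing the origin has $\mu_1$-perimeter $1-p$.

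Finally, the two assertions of the Proposition are read directly off the graph of $\psi_p$. If $(a',b')$ is a left-shifted interval with $b' \le 0$, then $F(a') < F(a)$ and $F(a') \le \tfrac12 - p$, so the shift moves $t$ strictly down inside the interval $[0,\tfrac12-p]$ on which $\psi_p$ is strictly increasing, whence $P_{\mu_1}((a',b')) = \psi_p(F(a')) < \psi_p(F(a)) = P_{\mu_1}((a,b))$; the right-shifted case with $a' \ge 0$ is the mirror statement on the strictly decreasing branch $[\tfrac12,1-p]$. In the complementary case the shift keeps $t$ inside the flat interval $[\tfrac12-p,\tfrac12]$ --- equivalently $a \le 0 \le b$ and $a' \le 0 \le b'$ --- where $\psi_p \equiv 1-p$, giving $P_{\mu_1}((a',b')) = P_{\mu_1}((a,b)) = 1-p$.

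There is no serious obstacle here: the only point that needs slight care is the bookkeeping at the two junction values $t = \tfrac12-p$ (that is, $b=0$) and $t = \tfrac12$ (that is, $a=0$), where the branches meet and the inequality degenerates to an equality; since $\psi_p$ is strictly monotone on each \emph{open} outer branch, the strict inequalities hold in all non-degenerate configurations, and the remainder of the argument is the routine evaluation of $\min(t,1-t)$.
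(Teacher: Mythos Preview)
Your treatment of the first assertion is correct and is exactly the paper's (implicit) argument: the proposition is stated without proof because it is meant to be read off directly from the explicit three-branch formula for $P_p(a)$ displayed just before it, and your computation of $\psi_p$ reproduces that formula via $t=F(a)$.

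The gap is in your reading of the word ``Otherwise''. You interpret it as the residual shifts in the regime $p<\tfrac12$ (i.e.\ left-shifts with $b'>0$ or right-shifts with $a'<0$) and then assert that in this complementary case ``the shift keeps $t$ inside the flat interval $[\tfrac12-p,\tfrac12]$''. That assertion is false: take for instance $p=\tfrac{1}{10}$, $t=F(a)=0.6$ and a left-shift to $t'=F(a')=0.55$; then $b'>0$ (since $t'>\tfrac12-p$), yet both $t,t'$ lie on the strictly decreasing branch $(\tfrac12,1-p]$, so $\psi_p(t')>\psi_p(t)$ and the perimeters are \emph{not} equal. Under your reading the second assertion of the proposition would simply be wrong.

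The intended reading is that ``Otherwise'' refers to the hypothesis on the measure, i.e.\ to the case $\mu_1((a,b))\ge\tfrac12$. This is consistent with the sentence structure (``If $(a,b)$ is an interval of measure such that $\mu_1((a,b))<\tfrac12$ \dots\ Otherwise \dots'') and with the paragraph immediately preceding the proposition, where the paper records that for $p\ge\tfrac12$ one has $P_{\mu_1}((a,b))=1-p$ for \emph{every} interval of measure $p$. The equality for all shifts is then immediate. So to fix your proof you only need to replace the last paragraph by the one-line observation that for $p\ge\tfrac12$ the function $\psi_p(t)=J(t)+J(p+t)=t+(1-p-t)=1-p$ is constant on $[0,1-p]$.
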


Arguing as in Theorem \ref{th generale} we obtain that for a fixed measure $p<\frac{1}{2}$
any complement of an interval and the half-lines are sets with minimal
perimeter. For $p\geq\frac{1}{2}$ any interval and half-lines have minimal
perimeter.

%%%%%%%%%%%%%%%%%%%%%%%%%%%%%%%%%%%%%%%%%%%%%%%%%%%%%%%%%%%%%%%%%%%%%%%%%%%%%%%%%%%%%%%%%%%%%%%%%%%%
%%%%%%%%%%%%%%%%%%%%%%%%%%%%%%%%%%%%%%%%%%%%%%%%%%%%%%%%%%%%%%%%%%%%%%%%%%%%%%%%%%%%%%%%%%%%%%%%%%%%
%%%%%%%%%%%%%%%%%%%%%%%%%%%%%%%%%%%%%%%%%%%%%%%%%%%%%%%%%%%%%%%%%%%%%%%%%%%%%%%%%%%%%%%%%%%%%%%%%%%%
%%%%%%%%%%%%%%%%%%%%%%%%%%%%%%%%%%%%%%%%%%%%%%%%%%%%%%%%%%%%%%%%%%%%%%%%%%%%%%%%%%%%%%%%%%%%%%%%%%%%

\section{Quantitative isoperimetric inequality}

In this section, following \cite{fusco}, we introduce and study
a notion of asymmetry, which quantify the "distance" between any measurable set
$E$ and the family of extremal sets in the isoperimetric problem.
Then we state a preliminary result on the sets that have minimal
perimeter and given measure and asymmetry
%, we may comment on it, and finally we state and prove a quantitative isoperimetric inequality
.

We define the asymmetry $\lambda(E)$ of a set $E$ of measure $p=\mu(E)$ as%
\begin{equation} \label{asim}
\lambda(E)=
\begin{cases}
\mu\left(E \bigtriangleup ( -\infty,-\beta_{p})  \cup(  \beta
_{p},+\infty)\right)   & \text{if }  p<\frac{1}{2}\\
\mu\left (E\bigtriangleup (  -\alpha_{p},\alpha_{p})\right)   & \text{if }
p>\frac{1}{2}\\
\min\left\{  \mu \left(E\bigtriangleup (  -\infty,-\beta_{p} )  \cup (
\beta_{p},+\infty)\right)  ,
\mu\left(E\bigtriangleup (  -\alpha_{p},\alpha_{p})  \right)\right\} \hspace{-0,2cm} & \text{if }  p=\frac{1}{2},
\end{cases}
\end{equation}
where $\beta_{p}=\alpha_{1-p}=-F^{-1}(p/2)$, $p \in [0,1]$ are defined in the previous section
and $\bigtriangleup$ stands for the symmetric difference between sets.

\begin{remark}
To help the reader in many computations throughout all this section, we observe that the set $E=(-\beta_a,-\beta_b)$, with $0 \leq a \leq b$, has perimeter $P(E)=J(b/2)+J(a/2)$ and measure
$\mu(E)=(b-a)/2$.
\end{remark}

The next lemma summarizes some basic properties on the asymmetry $\lambda(E)$.
%Introduce the following notation~: given a set $E$,
%\begin{equation}  \label{def:m}
%m(E):=\min\left\{  \mu(E),1-\mu(E)\right\} .
%\end{equation}
%In order to work with sets that have measure $p\geq\frac
%{1}{2}$ we need some complement invariant properties.

\begin{lemma} \label{sera}
Let $\mu \in \mathcal{F}$ and $E, F$ be two sets with finite $\mu$-perimeter.
Then,
\begin{itemize}
\item[i)] $E\bigtriangleup F=E^{c}\bigtriangleup F^{c},$
\item[ii)] $\lambda(E)=\lambda(E^{c}),$
\item[iii)] $0\leq\lambda(E)\leq2\min( \mu(E),1-\mu(E))$,
\end{itemize}
where $E^{c}$ denotes the complement of $E$.
\end{lemma}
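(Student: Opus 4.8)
The three items are essentially formal consequences of the definitions, so the plan is to verify each in turn with elementary set-theoretic manipulations; the only point requiring a small computation is item iii).

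For item i), I would simply note the pointwise identity $\mathbf{1}_{E\Delta F}=\lvert\mathbf{1}_E-\mathbf{1}_F\rvert=\lvert(1-\mathbf{1}_{E^c})-(1-\mathbf{1}_{F^c})\rvert=\lvert\mathbf{1}_{F^c}-\mathbf{1}_{E^c}\rvert=\mathbf{1}_{E^c\Delta F^c}$, so the two symmetric differences coincide as sets (not merely up to measure); integrating against $f$ is not even needed here.

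For item ii), I would unfold the definition \eqref{asim} of $\lambda$. Set $p=\mu(E)$, so $\mu(E^c)=1-p$. If $p<1/2$ then $\mu(E^c)=1-p>1/2$, and by definition $\lambda(E^c)=\mu\bigl(E^c\Delta(-\alpha_{1-p},\alpha_{1-p})\bigr)$. Now $\alpha_{1-p}=\beta_p$ and the complement of the symmetric interval $(-\beta_p,\beta_p)$ is exactly $(-\infty,-\beta_p)\cup(\beta_p,+\infty)$, so using item i),
$$
\lambda(E^c)=\mu\bigl(E^c\Delta(-\beta_p,\beta_p)\bigr)=\mu\bigl(E\Delta\bigl((-\infty,-\beta_p)\cup(\beta_p,+\infty)\bigr)\bigr)=\lambda(E).
$$
The case $p>1/2$ is symmetric (exchange the roles of the symmetric interval and its complement, again invoking $\alpha_{1-p}=\beta_p$), and the case $p=1/2$ follows because the min in \eqref{asim} is over the two sets $(-\infty,-\beta_{1/2})\cup(\beta_{1/2},+\infty)$ and $(-\alpha_{1/2},\alpha_{1/2})$, which are exchanged by complementation, so item i) shows the two quantities inside the min are merely permuted and the min is unchanged.

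For item iii), the lower bound $\lambda(E)\ge0$ is trivial since $\mu$ is nonnegative (and the min of nonnegative numbers is nonnegative). For the upper bound, observe that for \emph{any} two sets $A,B$ one has $\mu(A\Delta B)\le\mu(A)+\mu(B)$ and also $\mu(A\Delta B)=\mu(A^c\Delta B^c)\le\mu(A^c)+\mu(B^c)$, hence $\mu(A\Delta B)\le 2\min\{\mu(A),1-\mu(A),\mu(B),1-\mu(B)\}$ after taking $B$ to be an extremal set of the \emph{same} measure $p=\mu(A)$; in particular $\mu(A\Delta B)\le 2\min(p,1-p)$. Applying this with $A=E$ and $B$ any of the competitor extremal sets appearing in \eqref{asim} (each of which has measure exactly $p$ by construction of $\beta_p$ and $\alpha_p$) gives $\lambda(E)\le 2\min(\mu(E),1-\mu(E))$ in all three regimes. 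I do not anticipate any genuine obstacle here; the only thing to be careful about is matching the notation $\alpha_{1-p}=\beta_p$ correctly across the $p<1/2$ and $p>1/2$ cases when applying item i) inside item ii).
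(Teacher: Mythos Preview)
Your proposal is correct; the paper itself does not give a proof but simply states that the assertions ``are easy and left to the reader,'' so your direct verification via indicator functions for i), unfolding the definition with $\alpha_{1-p}=\beta_p$ for ii), and the bound $\mu(A\Delta B)\le\min(\mu(A)+\mu(B),\mu(A^c)+\mu(B^c))$ for iii) is exactly the kind of elementary check the authors had in mind. One cosmetic remark: the intermediate claim $\mu(A\Delta B)\le 2\min\{\mu(A),1-\mu(A),\mu(B),1-\mu(B)\}$ is not literally true for arbitrary $A,B$ (only $\mu(A\Delta B)\le\min\{\mu(A)+\mu(B),\,2-\mu(A)-\mu(B)\}$ holds in general), but since you immediately specialize to $\mu(A)=\mu(B)=p$ the conclusion $\lambda(E)\le 2\min(p,1-p)$ is unaffected.
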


\begin{proof}
The assertions
%$i)$ and $ii)$
are easy and left to the reader.
%We prove only $iii)$.

%Since the asymmetry
%is complement-invariant we can assume that $\mu(E)\leq\frac{1}{2}$.
%Moreover, the case $p=1/2$ being trivial we assume that $\mu(E) < \frac{1}{2}$.  By definition of
%the asymmetry one has
%\[
%\frac{\lambda(E)}{2}=\mu\left(  \left(  -\infty,-\beta_{p}\right)
%\cup\left(  \beta_{p},+\infty\right)  \backslash E\right)  \leq\mu\left(
%\left(  -\infty,-\beta_{p}\right)  \cup\beta_{p},+\infty\right)  =\mu(E)
%\]
%and%
%\[
%\frac{\lambda(E)}{2}=\mu\left(  E\cap\left(  -\beta_{p},\beta
%_{p}\right)  \right)  \leq1-\mu(E)
%\]
%which ends the proof.
\end{proof}

%%%%%%%%%%%%%%%%%%%%%%%%%%%%%%%%%%%%%%%%%%%%%%%%%%%%%%%%%%%%%%%%%%%%%%%%%%%%%%%%%%%%%%%%%%%%%%%%%%%%
%%%%%%%%%%%%%%%%%%%%%%%%%%%%%%%%%%%%%%%%%%%%%%%%%%%%%%%%%%%%%%%%%%%%%%%%%%%%%%%%%%%%%%%%%%%%%%%%%%%%
%%%%%%%%%%%%%%%%%%%%%%%%%%%%%%%%%%%%%%%%%%%%%%%%%%%%%%%%%%%%%%%%%%%%%%%%%%%%%%%%%%%%%%%%%%%%%%%%%%%%
%%%%%%%%%%%%%%%%%%%%%%%%%%%%%%%%%%%%%%%%%%%%%%%%%%%%%%%%%%%%%%%%%%%%%%%%%%%%%%%%%%%%%%%%%%%%%%%%%%%%

\subsection{A preliminary reduction and application} \label{sec:preliminary}

The next result is a first reduction to find the sets with minimal
perimeter and given measure and asymmetry. We first consider the case $0<\mu(E)\leq \frac{1}{2}$. The other case $\frac{1}{2}\leq\mu(E)<1$ can be obtained using complementary sets and Lemma \ref{sera}.

Given a set $E$, we set $\bar{E}=\{-x,x\in E\}$ for its symmetric with respect to the origin.

We will show, in Proposition \ref{prop quant 1} below, that the minimal sets among all sets of given measure $p$ and given asymmetry $\lambda$ have to be found among the following sets and their symmetric:
\begin{align}
& E_{1}
 =
\left(-\beta_{\frac{\lambda}{2}},-\beta_{p+\frac{\lambda}{2}}\right)
\cup \left(\beta_{p+\frac{\lambda}{2}},\beta_{\frac{\lambda}{2}}\right),
&&\mbox{if } 0\leq\lambda\leq2p , \label{E1} \\
%E_{1}
%=
%\left(F^{-1}\left(\frac{\lambda}{4}\right),F^{-1}\left(\frac{p}{2}+\frac{\lambda}{4}\right)\right)
%\cup \left(-F^{-1}\left(\frac{p}{2}+\frac{\lambda}{4}\right),-F^{-1}\left(\frac{\lambda}{4}\right)\right)
& E_{2}
=
\left(-\infty,-\beta_{p+ \lambda}\right)\cup\left(\beta_{p-\lambda},+\infty\right) \mbox{ and } \bar{E_2},
&&\mbox{if } 0\leq\lambda\leq p, \label{E2} \\
%E_{2}
%=
%\left(-\infty,F^{-1}\left(\frac{p}{2}+\frac{\lambda}{2}\right)\right)\cup\left(-F^{-1}\left(\frac{p}{2}-\frac{\lambda}{2}\right),+\infty\right)
& E_{3}
 =
\left(-\beta_{\lambda-p},-\beta_{\lambda+p}\right)  \mbox{ and }  \bar{E_3},
&&\mbox{if } p\leq\lambda\leq 2p, \label{E3}
\end{align}
%E_{3}
%=
%\left(F^{-1}\left(\frac{\lambda}{2}-\frac{p}{2}\right),F^{-1}\left(\frac{p}{2}+\frac{\lambda}{2}\right)\right)
and,  if $0\leq\lambda\leq 2p$,
\begin{equation}
 E_{4}
 =
\left(-\infty,-\beta_{p-\frac{\lambda}{2}}\right)
\cup
\left(-\beta_{1-\frac{\lambda}{2}},\beta_{1-\frac{\lambda}{2}}\right)
\cup
\left(\beta_{p-\frac{\lambda}{2}},+\infty \right)
.  \label{E4}
%E_{4}
%=
%\left(-\infty,\!F^{-1}\left(\frac{p}{2}-\frac{\lambda}{4}\right)\right)
%\cup\!\left(F^{-1}\left(\frac{1}{2}-\frac{\lambda}{4}\!\right),-F^{-1}\left(\frac{1}{2}-\frac{\lambda}{4}\right)\right)\cup\left(-F^{-1}\left(\frac{p}{2}-\frac{\lambda}{4}\!\right),+\infty\!\right)
\end{equation}
Observe that $E_2$ and $\bar{E_2}$ are not defined when $\lambda > p$ and that $E_3$ and $\bar{E_3}$ are not defined when $\lambda < p$.

%\begin{equation} \label{E5}\!\!
%E_{5}=\left(F^{-1}\left(\frac{\lambda}{2}-\frac{\mu}{2}\right),-F^{-1}\left(1-\frac{\lambda}{2}-\frac{\mu}{2}\right)\right)
%\end{equation}
%for $1-p\leq\lambda\leq 2p.$

\begin{figure}[ht]
\psfrag{e1}{$E_4$}
\psfrag{e2}{$E_3$}
\psfrag{e3}{$E_2$}
\psfrag{e4}{$E_1$}
\psfrag{0}{$0$}
\psfrag{-i}{$-\infty$}
\psfrag{i}{$+\infty$}
\psfrag{-bp}{$-\beta_p$}
\psfrag{bp}{$\beta_p$}
\psfrag{1}{$-\beta_{\frac{\lambda}{2}}$}
\psfrag{2}{$-\beta_{p+\frac{\lambda}{2}}$}
\psfrag{3}{$\beta_{p+\frac{\lambda}{2}}$}
\psfrag{4}{$\beta_{\frac{\lambda}{2}}$}
\psfrag{5}{$-\beta_{p+\lambda}$}
\psfrag{6}{$\beta_{p-\lambda}$}
\psfrag{7}{$-\beta_{\lambda-p}$}
\psfrag{8}{$-\beta_{p+\lambda}$}
\psfrag{9}{$-\beta_{p-\frac{\lambda}{2}}$}
\psfrag{10}{$-\beta_{1-\frac{\lambda}{2}}$}
\psfrag{11}{$\beta_{1-\frac{\lambda}{2}}$}
\psfrag{12}{$\beta_{p-\frac{\lambda}{2}}$}
\begin{center}
\includegraphics[width=.80\columnwidth]{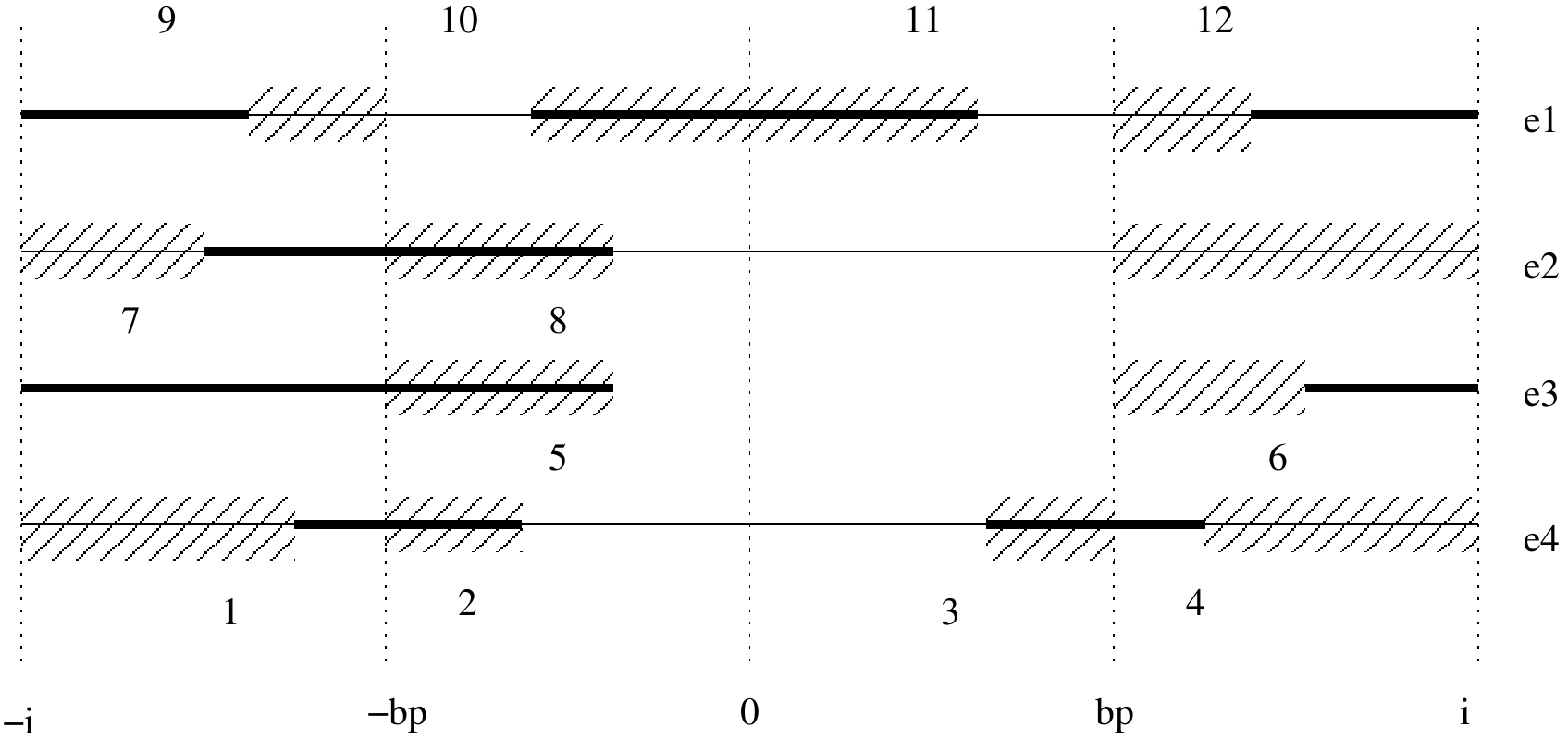}
\end{center}
\caption{The sets $E_1,\dots,E_4$ (bold lines). Hachured parts (of measure $\lambda$) correspond to the symmetric difference between the set $E_i$ and the optimal set $(-\infty,-\beta_p) \cup (\beta_p,\infty)$.
Observe that there is no universal order between the extremal points of the intervals defining
$E_1,\dots,E_4$, except that $\beta_{\lambda-p} \geq \beta_{\lambda/2}$, and $\beta_{p-\lambda} \geq \beta_{p-\frac{\lambda}{2}}$. Also, depending on the value of $p$ and $\lambda$, it could be that $0 \in E_3$.
}
%\label{fig:extremal}
\end{figure}

\begin{proposition} \label{prop quant 1}
Let $\mu \in \mathcal{F}$
and $E$ be a Borel set with measure $\mu(E)=p \in (0,\frac{1}{2}]$ and asymmetry $\lambda(E)=\lambda \in [0,2p]$. Then,
\begin{equation} \label{e tilde}
P(E)\geq
\begin{cases}
\underset{i=1,2,4}{\min}{P(E_{i})} & \text{if } 0\leq\lambda\leq p \\
\underset{i=1,3,4}{\min}{P(E_{i})} & \text{if } p \leq\lambda\leq 2p .
\end{cases}
\end{equation}
Moreover equality holds if and only if $E \in \{E_1,E_2,E_3,E_4,\bar{E_2},\bar{E_3}\}$.

If $0 \notin E$, then
\begin{equation*}
P(E)\geq
\begin{cases}
\underset{i=1,2}{\min}{P(E_{i})} & \text{if } 0\leq\lambda\leq p \\
\underset{i=1,3}{\min}{P(E_{i})} & \text{if } p \leq\lambda\leq \min(1-p,2p) \\
P(E_1) & \text{if } 1-p < \lambda\leq 2p
\end{cases}
\end{equation*}
with equality if and only if $E \in \{E_1,E_2,E_3,\bar{E_2},\bar{E_3}\}$.
\end{proposition}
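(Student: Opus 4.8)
The plan is to imitate the reduction argument used in the proof of Theorem~\ref{th generale}, but now keeping track of both the measure \emph{and} the asymmetry. First I would invoke the structure theorem for sets of finite perimeter: up to a null set, $E=\bigcup_{h\in H}(a_h,b_h)$ is a countable disjoint union of intervals with positive mutual distance. The key tool is again Proposition~\ref{shifting}: given two disjoint intervals both lying in $(0,\infty)$ (or both in $(-\infty,0)$), sliding the inner one outward toward the other one and gluing them \emph{decreases} the perimeter while \emph{preserving} both the measure $p$ and, crucially, the symmetric difference with the reference extremal set $(-\infty,-\beta_p)\cup(\beta_p,\infty)$ — since on $(0,\infty)$ that reference set is $(\beta_p,\infty)$ and the merge happens entirely to one side. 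I would first treat the case $0\notin E$: after exhausting all such merges one is left with at most one interval touching $+\infty$ and at most one touching $-\infty$ together with, possibly, a bounded interval on each side (the pieces of $E$ adjacent to $\pm\infty$ and those adjacent to the "gap" around $0$ cannot be merged with each other without crossing $0$ or without a sign constraint from Proposition~\ref{shifting}). A short combinatorial case analysis on how $E$ meets a neighborhood of $0$ and of $\pm\infty$ shows the surviving configuration is, up to symmetry, one of $E_1$, $E_2$, $E_3$ (with $E_1$ the fully bounded symmetric-complement-type shape, $E_2$ the two-half-lines shape, $E_3$ the single bounded interval), the parameters being forced by $\mu(E)=p$ and $\lambda(E)=\lambda$ — this is exactly where the ranges $0\le\lambda\le p$ (so $E_2$ makes sense) versus $p\le\lambda\le\min(1-p,2p)$ (so $E_3$ makes sense, with the extra cut-off $1-p<\lambda$ forcing $E_1$ only) come from.

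For the general case $0\in E$ I would pass to the complement $E^c$, which has measure $1-p>1/2$, use $\lambda(E)=\lambda(E^c)$ and $P(E)=P(E^c)$ from Lemma~\ref{sera}, and run the same merging argument on $E^c$ — but now the relevant reference set for $E^c$ at measure $1-p>1/2$ is the symmetric interval $(-\alpha_{1-p},\alpha_{1-p})=(-\beta_p,\beta_p)$, so the reductions on $E^c$ to the right of $0$ move pieces toward the origin. Unwinding, this produces on the original side the extra candidate $E_4$ (the "symmetric interval around $0$ plus two half-lines" shape), which is precisely the complement of the reduced form of $E^c$; alternatively one observes directly that when $0\in E$ the reduction of $E$ itself must produce a configuration containing $0$ in a bounded or unbounded central block, leading again to $E_1$, $E_3$, or $E_4$. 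Collecting the two cases, every $E$ of measure $p$ and asymmetry $\lambda$ has $P(E)\ge\min_i P(E_i)$ over the relevant index set, which is the claimed inequality. I should also verify the elementary bookkeeping that each $E_i$ as written in~\eqref{E1}–\eqref{E4} indeed has measure $p$ and asymmetry $\lambda$ (using the remark that $(-\beta_a,-\beta_b)$ has measure $(b-a)/2$), so that the minimum is attained and the stated extremizers are legitimate.

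The equality analysis is then a matter of retracing the reduction: each merging step in Proposition~\ref{shifting} is a \emph{strict} inequality (by strict log-convexity, $\mu\in\mathcal F$), so equality in $P(E)\ge\min_i P(E_i)$ forces that no nontrivial merge was available, i.e.\ $E$ was already in reduced form, hence $E\in\{E_1,E_2,E_3,E_4,\bar E_2,\bar E_3\}$ (and $\bar E_1=E_1$, $\bar E_4=E_4$ by symmetry of the defining intervals, so they need not be listed separately). I expect the main obstacle to be the case bookkeeping: there is genuinely "no universal order between the extremal points" (as the figure caption warns), so one must argue carefully that after all admissible shifts the only obstructions to further merging are the origin and the sign conditions "$a'+b'\ge 0$" in Proposition~\ref{shifting}(1b),(2), and that these obstructions pin down exactly the finite list $E_1,\dots,E_4$ and no stray sixth configuration. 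Handling the boundary subcases where an interval degenerates (empty, or equal to a half-line, i.e.\ $\lambda=0$, $\lambda=p$, $\lambda=2p$, $\lambda=1-p$) cleanly, via the convention $(\infty,\infty)=\emptyset$, is the other fiddly point; I would dispatch these by continuity / by noting the formulas for $E_i$ specialize correctly.
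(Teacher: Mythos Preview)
Your overall architecture --- structure theorem plus shifting moves that preserve both measure and asymmetry --- matches the paper's Step~1. The gap is in your ``short combinatorial case analysis'': shifting does \emph{not} terminate at the list $E_1,\dots,E_4$. After exhausting all admissible shifts the paper is left with \emph{seven} candidate shapes, three of which ($E_5,E_6,E_7$) are not on your list, and two of those carry a free parameter $t$. For example
\[
E_7=\bigl(-\beta_{\lambda-t},-\beta_{p+\lambda}\bigr)\cup\bigl(\beta_{p-t},+\infty\bigr)
\]
is a bounded interval on the left that straddles $-\beta_p$, together with a half-line on the right. You cannot shift the left piece further without altering $\lambda$: its left endpoint lies inside the reference set $(-\infty,-\beta_p)$ while its right endpoint does not, so any lateral move changes the symmetric difference. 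Similarly $E_6$ is an \emph{asymmetric} two-interval version of $E_1$ (the inner endpoints are pinned at $\pm\beta_{p+\lambda/2}$, but the outer endpoints can be traded off via $t$ without touching $\lambda$), and $E_5$ is a single half-line plus a symmetric central block. The obstructions to further shifting are therefore not only ``the origin and the sign conditions'' of Proposition~\ref{shifting}, but also the threshold points $\pm\beta_p$ coming from the asymmetry constraint --- and these produce genuinely new shapes, not degenerate cases of $E_1,\dots,E_4$.

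The paper flags this explicitly: ``At this step, the shifting property becomes useless.'' Its Step~3 eliminates $E_5,E_6,E_7$ by direct perimeter comparisons using the convexity of $J$: for instance $P(E_6)\ge P(E_1)$ follows from a slope inequality of the form $J(a)+J(b)\ge 2J(\tfrac{a+b}{2})$, and $P(E_5)\ge P(E_4)$ from the isoperimetric inequality applied to one component. Without this analytical step your reduction stalls at a one-parameter family and the minimum is not yet identified. Your complement trick for the $0\in E$ case also does not bypass this: passing to $E^c$ (measure $1-p>1/2$, reference set $(-\beta_p,\beta_p)$) runs into the same $\pm\beta_p$ obstruction on the complement side and will again produce extra candidates that must be killed analytically.
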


\begin{remark}
The second part of the above proposition (together with Proposition \ref{prop quant}  below)
will be used in Section \ref{sec:functional} where we will only consider
sets that do not contain the origin.
\end{remark}

\begin{proof}
Let $E$ be a set of measure $p \in (0,\frac{1}{2})$ and asymmetry $\lambda$.
As in  the proof of Theorem \ref{th generale} there exists a countable set $H$ such that $E=\underset{h\in H}{\cup}\left(  a_{h},b_{h}\right)$ up to a set of measure zero that we assume, without loss of generality, to be the empty set.

\textit{Step 1.} Arguing as in the proof of Theorem \ref{th generale} and using the shifting property (of Proposition \ref{shifting}), preserving not only the measure of the set but also its asymmetry (see Figure \ref{fig:reduction2} for an illustration), we  obtain (details are left to the reader)
 that
\begin{equation}\label{first step}
P(E)\geq P \left( \widetilde{E} \right),
\end{equation}
where
\[
\widetilde{E}=(-\infty,a_3) \cup (a_2,a_1) \cup(  a_0,b_0)\cup (b_1,b_2) \cup (b_3,+\infty)
\]
and
\begin{equation} \label{range1}
-\infty\leq a_3\leq a_2\leq  -\beta_{p} \leq a_1 \leq a_0\leq0\leq b_0\leq b_1\leq\beta_{p}\leq b_2\leq b_3\leq +\infty .
\end{equation}

Depending on the initial set $E$ every inequalities in \eqref{range1} might be either an equality or a strictly inequality.
In case of equality, we convey that $(\alpha,\alpha)=\emptyset$ for any $\alpha \in [-\infty,+\infty]$.

\begin{figure}[ht]
\psfrag{-b}{\tiny $-\beta_p$}
\psfrag{-b'}{\tiny $a_2$}
\psfrag{bp}{\tiny $\beta_p$}
\psfrag{-a}{\tiny $a_3$}
\psfrag{-a'}{\tiny $a_1$}
\psfrag{-a''}{\tiny $a_0$}
\psfrag{0'}{\tiny $a_0=0=b_0$}
\psfrag{b'}{\tiny $b_1$}
%\psfrag{b2}{\tiny $b_2$}
\psfrag{b''}{\tiny $b_3$}
\psfrag{b}{\tiny $b_2$}
\psfrag{-c}{\tiny $-a$}
\psfrag{-c'}{\tiny $-a'$}
\psfrag{d}{\tiny $b$}
\psfrag{d'}{\tiny $b'$}
\psfrag{0}{\tiny $0$}
\begin{center}
\includegraphics[width=.90\columnwidth]{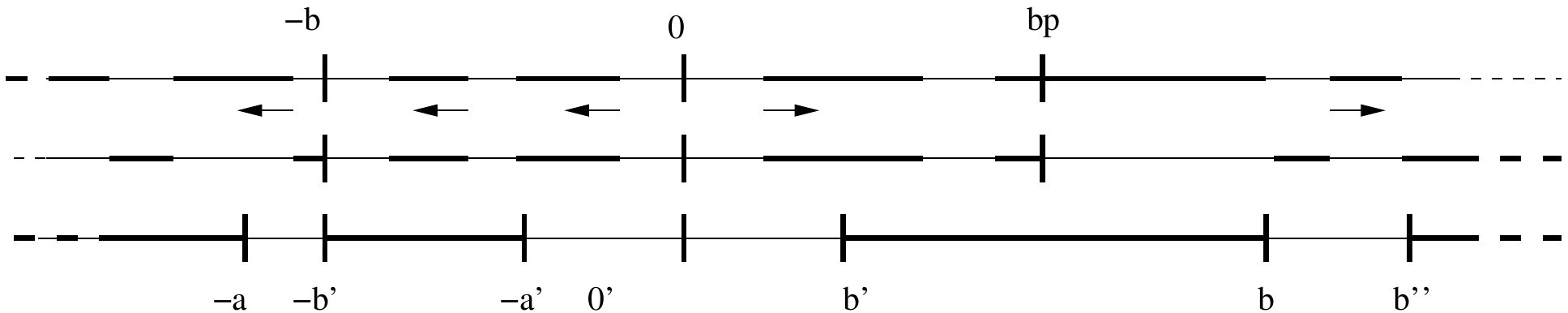}
\end{center}
\caption{The reduction from $E$, when $0 \notin E$, to the set
$\widetilde{E}$.
The top line represents the set $E=\underset{h\in H}{\cup}\left(  a_{h},b_{h}\right)$,
the second line is the symmetric difference $E \bigtriangleup (-\infty,-\beta_p)\cup(\beta_p,\infty)$.
The arrows show how to use the shifting property, preserving the asymmetry.
The bottom line is the set $\widetilde{E}=(-\infty,a_3) \cup (a_2,a_1) \cup(  a_0,b_0)\cup (b_1,b_2) \cup (b_3,+\infty)$
in the particular case where $a_2=-\beta_p$ and $a_0=b_0=0$.
}
\label{fig:reduction2}
\end{figure}

\textit{Step 2.} Examining all\footnote{there are $2^{12}$ of them (but a lot of symmetries!).} the possibilities in \eqref{range1} (equality versus strict inequality)
and using again the shifting property, it is possible to further reduce the family of sets with minimal perimeter.
Indeed, after reduction (which is an easy (but tedious) exercise left to the reader) one concludes that the minimal perimeter
has to be found among only the following 7
sets (see below for an example of such a reduction): $E_{1}$,  $E_{2}$ and $\bar{E_2}$, $E_{3}$ and $\bar{E_3}$, $E_{4}$, defined in (\ref{E1})-(\ref{E4}), and
$$
E_5 =
\left(-\infty,-\beta_{2p-\lambda}\right)
\cup \left(-\beta_{1-\frac{\lambda}{2}},\beta_{1-\frac{\lambda}{2}}\right)
 \text{ and } \bar{E_5}
\qquad \text {if } p\leq\lambda\leq 2p ,
$$
if either $0\leq\lambda\leq p$ and any given $t \in [0, p]$
or $p \leq \lambda \leq 2p$   and any given $t \in [ \lambda-p ,2p-\lambda]$,
$$
E_6=
\left(-\beta_{p-t},-\beta_{p+\frac{\lambda}{2}}\right)
\cup\left(\beta_{p+\frac{\lambda}{2}},\beta_{-p+\lambda+t} \right) \text{ and } \bar{E_6} ,
$$
%\begin{align*}
%& E_{5}=
%\left(-\beta_{\lambda},-\beta_{p+\frac{\lambda}{2}}\right)
%\cup\left(\beta_{p+\frac{\lambda}{2}},+\infty \right) \mbox{ and } \bar{E_5},
%&& \text {if } 0\leq\lambda\leq p,\\
%& E_{6}=
%\left(-\infty,-\beta_{2p-\lambda}\right)
%\cup \left(-\beta_{1-\frac{\lambda}{2}},\beta_{1-\frac{\lambda}{2}}\right)
% \text{ and } \bar{E_6},
%&&\text {if } p\leq\lambda\leq 2p ,
%\end{align*}
and, if either $0\leq\lambda\leq p$ and any given $t \in [0,\lambda]$
or $p \leq \lambda \leq 2p$   and any given $t \in [\lambda -p , p]$,
$$
E_{7}=
\left(-\beta_{\lambda-t},-\beta_{p+\lambda}\right)
\cup\left(\beta_{p-t},+\infty \right) \text{ and } \bar{E_7} .
$$
As an example of the above reduction, let us consider the set $\widetilde{E}$ with
$-\infty<a_3 <a_2<a_1<  a_0=b_0 < b_1<b_2<b_3<+\infty$ in \eqref{range1}.
Consider the complementary set
$\mathbb{R} \setminus \widetilde{E} =  (a_3,a_2) \cup (a_1,b_1) \cup (b_2,b_3)$. Using the shifting property,
the perimeter decreases if one moves the interval $(a_3 ,a_2)$ towards $-\infty$ and the interval $(b_2,b_3)$ towards $+\infty$. Furthermore, the shifting property also guarantees that the perimeter decreases if one symmetrizes the interval $(a_1, b_1)$. All such reductions did not affect neither the measure nor the asymmetry. Finally,
considering again the complementary set, we end up with the set $E_{1}$ defined (\ref{E1}).

\textit{Step 3.} At this step, the shifting property  becomes useless.
To end the proof, one needs to show that $E_5, E_6$ and $E_7$ have bigger perimeter than
$E_1,E_2,E_3,E_4$ which will be achieved by using simple analytical computations.

First, we observe that
$$
P(E_5)=P\left((-\infty,-\beta_{2p-\lambda})\right) + P((-\beta_{1-\frac{\lambda}{2}},\beta_{1-\frac{\lambda}{2}}))
> P(E_4)
$$
since, by the isoperimetric inequality (Theorem \ref{th generale}), we are guaranteed that\newline
$P\left((-\infty,-\beta_{2p-\lambda})\right)>P((-\infty,-\beta_{p-\frac{\lambda}{2}})\cup(\beta_{p-\frac{\lambda}{2}},\infty))$.

We notice that $|\frac{\lambda}{4} - \frac{-p+\lambda+t}{2}| = |\frac{p-t}{2}-\frac{\lambda}{4}|$
so that, by convexity of $J$ (comparing the slopes),
\begin{align*}
 J\left(\frac{p-t}{2}\right) + & J\left(\frac{-p+\lambda+t}{2}\right) - 2J\left(\frac{\lambda}{4}\right) \\
& =
\begin{cases}
\left(J(\frac{p-t}{2}) -J(\frac{\lambda}{4})\right) - \left(J(\frac{\lambda}{4})- J(\frac{-p+\lambda+t}{2})\right) & \mbox{if } t \leq p- \frac{\lambda}{2} \\
\left(J(\frac{-p+\lambda+t}{2})- J(\frac{\lambda}{4})\right) - \left(J(\frac{\lambda}{4})- J(\frac{p-t}{2})\right)  & \mbox{if } t \geq p- \frac{\lambda}{2} \\
\end{cases} \\
&
\geq 0
\end{align*}
which, in turn, immediately implies that
$P(E_6) \geq P(E_1)$.

Finally, we observe that the map
$t \mapsto P(E_7)=J\left(\frac{\lambda-t}{2}\right)+J\left(\frac{p+\lambda}{2}\right) + J\left(\frac{p-t}{2}\right)$
is decreasing, so that (take $t=0$ and $t=\lambda-p$ respectively, which gives the same result)
\begin{align*}
P(E_7)
& \geq
J\left(\frac{\lambda}{2}\right)+J\left(\frac{p+\lambda}{2}\right) + J\left(\frac{p}{2}\right) \\
& \geq
\begin{cases}
J\left(\frac{p+\lambda}{2}\right) + J\left(\frac{p-\lambda}{2}\right) = P(E_2) & \mbox{if } 0 \leq \lambda \leq p \\
J\left(\frac{p+\lambda}{2}\right) + J\left(\frac{\lambda-p}{2}\right) = P(E_3) & \mbox{if } p \leq \lambda \leq 2p .
\end{cases}
\end{align*}

This completes the proof  of the first part of the proposition (equality cases follows easily by keeping track of equality cases in the various steps
in the reduction above).

The second part follows the same lines. One only needs to observe that $E_4$ need not be considered, since $0 \in E_4$, and that $0 \notin E_3$ implies $\lambda \leq 1-p$ (hence the range $p \leq \lambda \leq 1-p$). Also, observe that using the shifting lemma never affects the fact that $0 \notin E$ during the various step of the reduction above.
This achieves the proof.
\end{proof}

\bigskip
 At this point it is not possible to conclude which one of the sets $E_{i}$, $i=1,..,4$ has minimal perimeter on the range
 $p \in [0,1/2]$, $\lambda \in [0,2p]$ for the whole class of probability measures $\mathcal{F}$.
 Indeed, depending on the choice of $\mu \in \mathcal{F}$, one can exhibit very different behaviors.

To illustrate this phenomenon, let us deal with two specific generalized Cauchy distributions \eqref{mis 1}, with parameter $\alpha=1$ and $\alpha=1/2$:
$$
dm_1(x)=\frac{1}{2(1+|x|)^2}dx \qquad \mbox{and} \quad dm_{1/2}(x)= \frac{1}{4(1+|x|)^{3/2}}dx .
$$
Recall that $J_\alpha(t)=\alpha2^{\frac{1}{\alpha}} \min(t,1-t)^{1+\frac{1}{\alpha}}$ so that
$$
J_1(t)=2\min(t,1-t)^2 \qquad \mbox{and} \quad J_{1/2}(t) = 2\min(t,1-t)^3 .
$$
Since functions $J$ are explicit, one can compute the various perimeter $P(E_1)$, $P(E_2)$, $P(E_3)$, $P(E_4)$ and compare them.
It is simple (but very tedious, details are left to the reader) to obtain Figure \ref{fig:cauchy} below that depicts, for $p \in [0,1/2]$, the different regions with minimal perimeter (note that the region $p \in[1/2,1]$ can be obtained by symmetry about $p=1/2$, using Lemma \ref{sera}).
\begin{figure}[ht]
\psfrag{l1}{\tiny $\lambda=1-p$}
\psfrag{l2}{\tiny $\lambda=2p$}
\psfrag{l3}{\tiny $\lambda=p$}
\psfrag{e1}{\tiny $E_1$}
\psfrag{e2}{\tiny $E_2$}
\psfrag{e3}{\tiny $E_3$}
\psfrag{e4}{\tiny $E_4$}
\psfrag{l}{\tiny $\lambda$}
\psfrag{12}{\tiny $\frac{1}{2}$}
\psfrag{1}{\tiny $1$}
\psfrag{0}{\tiny $0$}
\psfrag{a1}{$\alpha=1$}
\psfrag{a2}{$\alpha=\frac{1}{2}$}
\begin{center}
\includegraphics[width=.80\columnwidth]{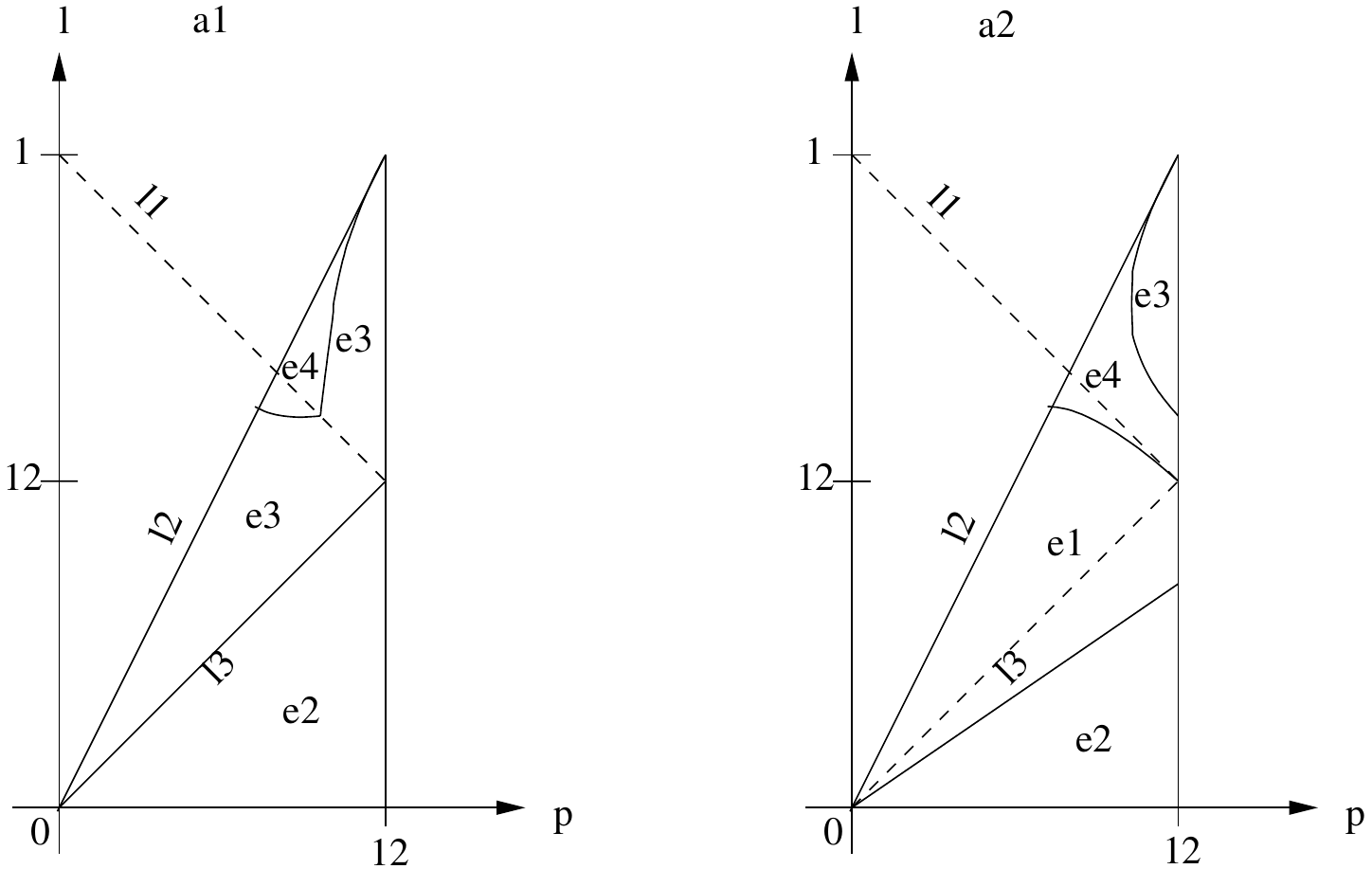}
\end{center}
\caption{The region $p \in \left[0,\frac{1}{2}\right]$, $0 \leq \lambda \leq 2p$ with the different areas where the set $E_i$
has minimal perimeter among $E_1,E_2,E_3$ and $E_4$.
The picture on the left corresponds to the Cauchy distribution with $\alpha=1$. Here, the $E_4$-domain is delimited by two curves of equation $\lambda=-1-p+\sqrt{3+2p+p^2}$ (bottom, that intersect the line $\lambda=2p$ at $p_1=\frac{\sqrt{5}-1}{4}$) and
$-1+2p+\lambda(1-p)-\frac{\lambda^2}{2}=0$ (top). The two curves intersect on the line of equation $\lambda=1-p$ at $p_2=\sqrt{2}-1$.
The picture on the right corresponds to $\alpha=\frac{1}{2}$. The $E_2$-domain and $E_1$-domain are delimited by a straight line of equation $\lambda=\frac{4\sqrt{3}p}{3\sqrt{3}+\sqrt{19}}$. The two curves delimiting the $E_3,E_4$ region are degree 3  polynomials in $p,\lambda$.}
\label{fig:cauchy}
\end{figure}

If $m_1$ and $m_{1/2}$ have very different behaviors, under additional assumptions,
one can however prove that a large sub-class of $\mathcal{F}$
behaves like $m_1$ (\textit{i.e.} have the same type of picture than the left one in Figure \ref{fig:cauchy}).
This is stated in the next proposition.

\begin{proposition} \label{prop quant}
Let $\mu \in \mathcal{F}$ and assume in addition that $J \in \mathcal{C}^1\left(0,\frac{1}{2}\right)$,
$J'$ is concave on $\left(0,\frac{1}{2}\right)$ and $J'(0^{+})=0$.
Let us fix $p \in \left[0,\frac{1}{2}\right]$ and $\lambda \in [0,1]$, and let us define $E_2,E_3$ and $E_4$ as in \eqref{E2},\eqref{E3} and \eqref{E4} respectively.

Then, there exist $p_{1}\in \left(0,\frac{1}{3}\right)$, $p_{2} \in \left(\frac{1}{3},{1}{2}\right)$, a function $\lambda_0:[p_1,p_2] \to [0,1]$ satisfying $\lambda_0(p_1)=2p_1$ and $\lambda_0(p_2)=1-p_2$, and a $\mathcal{C}^1$-increasing function
$p_0 : [1-p_2,1] \to \left[0,\frac{1}{2}\right]$ satisfying $p_0(1-p_2)=p_2$ and $p_0'(1)=\frac{1}{2}$, %non-increasing on $[1-p_2,\lambda_1]$ and non-decreasing on $[\lambda_1,1]$
such that for any Borel set $E$ with measure $p$ and asymmetry $\lambda$ it holds
$$
P(E) \geq
\begin{cases}
P(E_2) & \mbox{if } 0 \leq \lambda \leq p \\
P(E_4) & \mbox{if } p \in [p_1,p_2] \mbox{ and } \lambda \in [\lambda_0(p),\min(2p,1-p)] \\
P(E_4) & \mbox{if } \lambda \in [1-p_2,1] \mbox{ and } p \in \left[\frac{\lambda}{2},p_0(\lambda)\right] \\
P(E_3) & \mbox{otherwise}.
\end{cases}
$$
Moreover, if $0 \notin E$, then
\begin{equation}
P(E) \geq \label{range}
\begin{cases}
P(E_2) & \mbox{if } 0 \leq \lambda \leq p \\
P(E_3) & \mbox{if } p \leq \lambda \leq \min(1-p,2p)\\
P(E_1) & \mbox{if } 1-p < \lambda \leq 2p .
\end{cases}
\end{equation}
\end{proposition}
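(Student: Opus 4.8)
The starting point is Proposition~\ref{prop quant 1}: it already does the geometric reduction, guaranteeing that $P(E)\ge\min_i P(E_i)$, where $i$ ranges over the appropriate subfamily of $\{E_1,E_2,E_3,E_4\}$. Hence the whole argument reduces to comparing these finitely many perimeters and locating, in the region $0\le\lambda\le 2p\le 1$, where each of them is the smallest. Writing $u:=p/2$, $v:=\lambda/4$ and using the Remark that $(-\beta_a,-\beta_b)$ has perimeter $J(b/2)+J(a/2)$, one records
\begin{align*}
P(E_1)&=2J(u+v)+2J(v),\\
P(E_2)&=J(u+2v)+J(u-2v)\quad(\lambda\le p),\\
P(E_3)&=J(u+2v)+J(2v-u)\quad(p\le\lambda\le\min(2p,1-p)),\\
P(E_4)&=2J(u-v)+2J(\tfrac12-v),
\end{align*}
all arguments lying in $[0,\tfrac12]$ on the indicated ranges. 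Everything then becomes a matter of one–variable inequalities for $J$.

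The analytic facts I would use are the following. From $\mu\in\mathcal{F}$: $J$ is continuous, positive, increasing and strictly convex on $(0,\tfrac12)$ with $J(0)=0$, so $J(\theta t)\le\theta J(t)$ for $\theta\in[0,1]$ and $J(a)+J(b)\ge 2J(\tfrac{a+b}{2})$. The extra hypotheses ($J\in\mathcal{C}^1$, $J'$ concave on $(0,\tfrac12)$, $J'(0^+)=0$) add: $J'\ge 0$; the map $t\mapsto J'(t)/t$ is nonincreasing on $(0,\tfrac12)$ (a concave function vanishing at the origin is star–shaped), hence $J'(s)\ge\frac{s}{t}J'(t)$ for $0<s\le t$ and, integrating, $J(s)\ge\frac{s^2}{2t}J'(t)$ for $0<s\le t$; and, by concavity of $J'$, $(d-c)\frac{J'(c)+J'(d)}{2}\le\int_c^d J'\le(d-c)J'(\tfrac{c+d}{2})$ for $0<c<d<\tfrac12$ (chord from below, Jensen from above). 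The identity that organizes the easy comparisons is a single one–variable lemma: for $u>0$ and $0\le w\le 2u$ with $u+w\le\tfrac12$,
$$2J(u+\tfrac w2)+2J(\tfrac w2)-J(u+w)-J(|u-w|)\ \ge\ \frac{w\,(u-\tfrac w2)}{u+\tfrac{3w}{4}}\,J'\!\Big(u+\tfrac{3w}{4}\Big)\ \ge\ 0,$$
proved by writing the left–hand side as $\int_{|u-w|}^{u+w/2}J'-\int_{u+w/2}^{u+w}J'+2J(w/2)$, bounding the first integral from below by the chord bound, the second from above by Jensen, converting every value of $J'$ that appears to $J'(u+\tfrac{3w}{4})$ by star–shapedness, and then collapsing the terms by elementary algebra.

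With this in hand the first block of comparisons is routine. Applying the lemma with $w=2v$ gives at once $P(E_2)\le P(E_1)$ on $\lambda\le p$ (there $|u-w|=u-2v\ge 0$) and $P(E_3)\le P(E_1)$ on $p\le\lambda\le\min(2p,1-p)$ (there $|u-w|=2v-u\ge 0$, and $u+w=u+2v\le\tfrac12$ is exactly the condition $\lambda\le 1-p$ that keeps all arguments in $[0,\tfrac12]$). For the remaining inequality $P(E_2)\le P(E_4)$ on $\lambda\le p$ it suffices to check $P(E_1)\le P(E_4)$ there, which follows from plain monotonicity of $J$: $P(E_1)-P(E_4)=2\big(J(u+v)-J(u-v)\big)-2\big(J(\tfrac12-v)-J(v)\big)=2\int_{u-v}^{u+v}J'-2\int_{v}^{1/2-v}J'\le 0$ since $[u-v,u+v]\subseteq[v,\tfrac12-v]$ when $\lambda\le p$ and $p\le\tfrac12$. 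Feeding these comparisons into the second part of Proposition~\ref{prop quant 1} already yields the estimate \eqref{range} (the case $0\notin E$), the last line $P(E_1)$ being forced because $E_3$ is not even defined once $\lambda>1-p$.

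The main obstacle is the remaining (first) display, when $0$ may belong to $E$: there one must compare $P(E_3)$ (for $\lambda\le 1-p$) with $P(E_4)$, and $P(E_4)$ with $P(E_1)$ (for $\lambda>1-p$), over $p\le\lambda\le 2p$. I would set $\Delta(p,\lambda):=P(E_4)-P(E_3)=2J(u-v)+2J(\tfrac12-v)-J(u+2v)-J(2v-u)$ and show that, along the relevant strip, $\Delta$ is strictly monotone in one of its variables; signing the corresponding partial derivative is precisely where concavity of $J'$ enters decisively — this is the hypothesis that rules out the anomalous behaviour of, \emph{e.g.}, the Cauchy measure $m_{1/2}$ — while $J'(0^+)=0$ is used through the behaviour near $p=\lambda/2$, where the central term $J(u-v)$ degenerates. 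Monotonicity produces a single zero set which, $J$ being $\mathcal{C}^1$, is by the implicit function theorem a $\mathcal{C}^1$ curve: on the part $\lambda\le 1-p$ it is the graph of $\lambda_0$ over $p\in[p_1,p_2]$, and beyond the line $\lambda=1-p$ (where $E_3$ disappears) it continues as the graph of an increasing $p_0$ over $\lambda\in[1-p_2,1]$; the two descriptions meet on $\lambda=1-p$, which forces $\lambda_0(p_2)=1-p_2$ and $p_0(1-p_2)=p_2$. The endpoint data come from boundary evaluations: $p_1$ solves $\Delta(p,2p)=0$, that is $2J(\tfrac{1-p}{2})=J(\tfrac{3p}{2})+J(\tfrac p2)$, and one sees $p_1\in(0,\tfrac13)$ because $\Delta(p,2p)\to 2J(\tfrac12)>0$ as $p\to 0^+$ while $\Delta(\tfrac13,\tfrac23)=2J(\tfrac13)-J(\tfrac12)-J(\tfrac16)\le 0$ by strict convexity (midpoint of $\{\tfrac16,\tfrac12\}$); $p_2\in(\tfrac13,\tfrac12)$ solves $\Delta(p,1-p)=0$; and $p_0(1)=\tfrac12$, $p_0'(1)=\tfrac12$ follow by evaluating $\Delta$ and its partials at $\lambda=1$ — where $u-v=0$ and $J'(0^+)=0$ collapse several terms — together with a first–order expansion near that point. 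A final check that $P(E_1)\ge\min(P(E_3),P(E_4))$ throughout $p\le\lambda\le 2p$ (using the lemma above where $E_3$ is available, and a direct comparison of $P(E_1)$ with $P(E_4)$ elsewhere) then gives the first display and completes the proof. I expect this last step — the $E_3$–versus–$E_4$ (and $E_4$–versus–$E_1$) comparison and the analysis of the separating curves $\lambda_0,p_0$ — to be the delicate part; the other comparisons are routine convexity and star–shapedness estimates, and it is the monotonicity of $\Delta$ and the $\mathcal{C}^1$ bookkeeping (implicit function theorem, endpoint derivatives) that require care and where the two additional hypotheses are genuinely invoked.
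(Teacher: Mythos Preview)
Your strategy is the paper's: invoke Proposition~\ref{prop quant 1}, then compare the $P(E_i)$ pairwise. For the routine comparisons your arguments are valid and in places slicker than the paper's --- the unified star-shapedness lemma for $P(E_1)\ge P(E_2),P(E_3)$ checks out (the paper instead uses sublinearity of $J'$ for the $E_2$ comparison and the decrease of $t\mapsto J(t)/t^2$ for the $E_3$ comparison), and deducing $P(E_2)\le P(E_4)$ on $\lambda\le p$ via $P(E_2)\le P(E_1)\le P(E_4)$ with your integral-containment argument is shorter than the paper's direct estimate on $L_{42}$.

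The $E_3$/$E_4$ comparison you only sketch, and there is one slip to correct: $E_3$ does \emph{not} ``disappear'' beyond the line $\lambda=1-p$. It remains a competitor throughout $p\le\lambda\le 2p$ (Proposition~\ref{prop quant 1} keeps it there), and the curve $p_0$ is precisely the zero set of $L_{43}:=P(E_4)-P(E_3)$ in that region, with $P(E_3)$ rewritten via the symmetry $J((p+\lambda)/2)=J(1-(p+\lambda)/2)$. The paper makes your ``monotone in one variable'' concrete by splitting the strip at $\lambda=1-p$. Below that line $\partial_\lambda L_{43}\le 0$ is immediate (each term contributes with the same sign), so one analyzes the boundary values $H(p)=L_{43}(p,p)>0$ and $G(p)=L_{43}(p,\min(2p,1-p))$, the latter dipping below zero exactly on an interval $(p_1,p_2)$; this produces $\lambda_0$. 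Above the line $\partial_p L_{43}\ge 0$ is again immediate, and one analyzes $G(\lambda)=L_{43}(\tfrac12,\lambda)$ and $H(\lambda)=L_{43}(\max(\tfrac{\lambda}{2},1-\lambda),\lambda)$; this produces $p_0$. Note that the extra hypotheses (concavity of $J'$ and $J'(0^+)=0$) enter not in the interior monotonicity of $L_{43}$ --- those partials need only $J'\ge 0$ --- but in signing the boundary function $G(\lambda)$, via sublinearity of $J'$. The implicit function theorem then gives the $\mathcal{C}^1$ regularity and endpoint derivative of $p_0$, as you anticipate.
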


\begin{remark}
Observe that $m_1$ satisfies the assumption of the proposition and more generally any generalized Cauchy distribution $m_\alpha$ with $\alpha \geq 1$. Also, Figure \ref{fig:cauchy} is an illustration of the result of the proposition (with $p_1=(\sqrt{5}-1)/4$, $p_2=\sqrt{2} -1$, $\lambda_0(p)=-1-p+\sqrt{3+2p+p^2}$ and $p_0(\lambda)=(1-\lambda+\frac{\lambda^2}{2})/(2-\lambda)$).

The property $p_0'(1)=1/2$ means that the curve has $\lambda=2p$ as tangent in $(1/2,1)$.

%As already mentioned, the second part of the above proposition
%will be used in Section \ref{sec:functional} where we will only consider
%sets that do not contain the origin. Also the range $p \leq\lambda\leq 1-p$ is justified by the fact that, for $\lambda \geq 1-p$,
%$0 \in E_3$.

On the other hand, the assumptions on $J$ guarantee that $J'$ is sub-linear, \textit{i.e.}
%\begin{equation}\label{J' sublinear}
$J'(a+b)\leq J'(a)+J'(b)$ for all $a,b \in [0,1/2]$,
%\end{equation}
and that
%\begin{equation}\label{J t square}
$t\mapsto\frac{J(t)}{t^{2}}$ is decreasing on $(0,1/2)$.
%Hence, together with Lemma \ref{lemino}, one has
%$2J(t) \leq J(2t) \leq 4J(t)$ for any $t \in [0,1/4]$.
We will make use of these properties repeatedly.
%\end{equation}
\end{remark}

\begin{proof}
The proof consists in studying various functions of two variables. Such studies are easy exercises but use various small tricks. For the seek of completeness and in order to help the interesting reader, we give most of the details. We shall use Proposition \ref{prop quant 1} and deal with different cases.

\medskip

\textit{Case $0\leq\lambda\leq p$. } We need to compare the perimeters of $E_{1}$, $E_{2}$ and $E_{4}$. To this aim, consider  the function
$L_{12}(p,\lambda):=P(E_{1})-P(E_{2})=2J(\lambda/4)+2J((2p+\lambda)/4) - J((p+\lambda)/2) - J((p-\lambda)/2)$.
Considering the partial derivative with respect to $\lambda$, and using the sub-linearity of $J'$,
one concludes that $\lambda \mapsto L_{12}(p,\lambda)$ is increasing. Since $L_{12}(p,0)=0$, it finally follows that $P(E_{2}) \leq P(E_{1})$.

Consider now the function  $L_{42}(p,\lambda):=P(E_{4})-P(E_{2})=2J((2p-\lambda)/4)+2J((2-\lambda)/4) - J((p+\lambda)/2) - J((p-\lambda)/2)$. One has
$2\partial_\lambda L_{24}(p,\lambda)=-J'((2p-\lambda)/4)-J'((2-\lambda)/4 - J'((p+\lambda)/2) + J'((p-\lambda)/2) \leq 0$ since $J'$ is non-decreasing and $p-\lambda \leq p+\lambda$ (here and below we use the shorthand notation $\partial_\lambda, \partial_p$ for the partial derivatives with respect to $\lambda,p$).
Therefore, $L_{24}(p,\lambda) \geq L_{24}(p,p) = 2J(p/4)+2J((2-p)/4 - J(p)$. Taking the derivative with respect to $p$ , one immediately sees that $p \mapsto L_{24}(p)$ is non-increasing so that
$L_{24}(p,p) \geq L_{24}(1/4,1/4)=2J(1/8)+2J(3/8)-J(1/2)$. Observing that $t \mapsto J(t)/t^2$ is non-increasing, we have
$J(3/8)/(3/8)^2 \geq J(1/2)/(1/2)^2$, which leads to
\begin{equation}\label{eq:j}
2J(3/8) \geq \frac{9}{8}J(1/2) \geq J(1/2).
\end{equation}
Finally we conclude that
$L_{24}(p,\lambda) \geq 0$, which guarantees that $P(E_{2}) \leq P(E_{4})$.
This completes the picture for $0\leq\lambda\leq p$.

\medskip

\textit{Case $p \leq \lambda \leq 2p$.} We need to compare the perimeters of $E_{1}$, $E_{3}$ and $E_{4}$.  We shall first prove that $P(E_1) \geq P(E_3)$ when $p \leq \lambda \leq 1-p$ and that $P(E_1) \geq P(E_4)$ when $1-p \leq \lambda \leq 2p$. This will reduce the study to the comparison of
the perimeters of $E_3$ and $E_4$ only.

Consider first the function $L_{13}(p,\lambda):=P(E_{1})-P(E_{3})=2J(\lambda/4)+2J((2p+\lambda)/4) - J((p+\lambda)/2) - J((\lambda-p)/2)$ with $p \leq \lambda \leq 1-p$.
Since $\lambda \leq 2p$, it holds
$(\lambda-p)/2\leq \lambda/4$. Hence, $L_{13}(p,\lambda) \geq J(\lambda/4)+2J((2p+\lambda)/4) - J((p+\lambda)/2)$. Using twice the fact that $t \mapsto J(t)/t^2$ is non-increasing, we have
$$
2J\left(\frac{2p+\lambda}{4}\right) \geq \frac{1}{2}\left[\frac{2p+\lambda}{p+\lambda}\right]^2 J\left(\frac{p+\lambda}{2}\right)
\quad \mbox{and} \quad
J\left(\frac{\lambda}{4}\right) \geq \left[\frac{\lambda}{2(p+\lambda)}\right]^2 J\left(\frac{p+\lambda}{2}\right),
$$
so that, after few rearrangements
$$
J\left(\frac{\lambda}{4}\right)+2J\left(\frac{2p+\lambda}{4}\right) - J\left(\frac{p+\lambda}{2}\right)
\geq
\frac{p^2-\frac{\lambda^2}{4}}{(p+\lambda)^2}J\left(\frac{p+\lambda}{2}\right) \geq 0,
$$
since $\lambda \leq 2p$. This implies that $P(E_{1}) \geq P(E_{3})$ (when $p \leq \lambda \leq 1-p$).

Consider now the function
$L_{14}(p,\lambda)=:=P(E_{1})-P(E_{4})=2J(\lambda/4)+2J((2p+\lambda)/4) - 2J((2p-\lambda)/4)-2J((2-\lambda)/4)
$ with $1-p \leq \lambda \leq 2p$. Since $\lambda \mapsto L_{14}(p,\lambda)$ is non-decreasing, we have
$L_{14}(p,\lambda) \geq L_{14}(p,1-p)=2J((1-p)/4)-2J((3p-1)/4)$. Last function is non-increasing (in $p$). Hence,
$L_{14}(p,1-p) \geq L_{14}(1/2,1/2)=0$. This guarantees that, as announced $P(E_1) \geq P(E_4)$
when $1-p \leq \lambda \leq 2p$.

At this point is remains to compare $P(E_3)$ and $P(E_4)$
when $p \leq \lambda \leq 2p$,  considering the function
$L_{43}(p,\lambda):=P(E_{4})-P(E_{3})$. We will distinguish between two sub-cases.

We start by dealing with $p \leq \lambda \leq \min (2p,1-p)$. In that case,
$L_{43}(p,\lambda) =2J((2p-\lambda)/4)+2J((2-\lambda)/4) - J((p+\lambda)/2) - J((\lambda-p)/2)$
 is obviously non-increasing in $\lambda$. In order to deduce the sign of $L_{43}$
we need to study the extreme points $H(p):=L_{43}(p,p)$ and $G(p):=L_{43}(p,\min(2p,1-p))$:
\begin{center}
\begin{variations}
 \lambda &  p          &         &  \min (2p,1-p)  \\ \filet
\m{\lambda \mapsto L_{43}(p,\lambda)} & \h{H(p)} &  \d &   G(p) \;\;\;\;\;\;\;\;\;\;\; \\ \filet
\end{variations}
\end{center}
First, we observe that $H(p):=L_{43}(p,p)=2J(p/4)+2J((2-p)/4) - J(p)$ is non-increasing (take the derivative)
so that $H(p) \geq H(1/2) = 2J(1/8)+2J(3/8)-J(1/2) >0$ thanks to \eqref{eq:j}.
Then, we notice that $p \mapsto G(p):=L_{43}(p,\min(2p,1-p))$ is obviously non-increasing on $[0,1/3]$ and
non-decreasing on $[1/3,1/2]$. Since
$G(0)=L_{43}(0,0)=2J(1/2)>0$, $G(1/2)=L_{43}(1/2,1/2) = 2J(1/8)+2J(3/8)-J(1/2) >0$ thanks to \eqref{eq:j}, and
$G(1/3)=L_{43}(1/3,2/3)=2J(1/3)-J(1/6)-J(1/2)<0$ (since the slope $[J(1/2)-J(1/3)]/(1/6)$ is larger, by convexity of $J$,  than the slope $[J(1/3)-J(1/6)]/(1/6)$), we end up with the following diagram:
\begin{center}
\begin{variations}
  p      &  0          &      & p_1  &        &1/3             &      & p_2  &     &        1/2   \\ \filet
\m{G} & \h{G(0) >0} &  \dh & \m 0 &  \db   &  G(1/3)<0  & \cb  & \m 0 & \ch &  \h{G(1/2) >0}\\ \filet
\end{variations}
\end{center}
for some $p_1 \in (0,1/3)$ and some $p_2 \in (1/3,1/2)$.
From this we conclude that $P(E_{4}) \geq P(E_{3})$ when $p \in [0,p_1] \cup[p_2,1/2]$, and that
$P(E_{4})-P(E_{3})$ changes sign (at a unique point $\lambda_0(p)$) when $\lambda$ varies and $p \in (p_1,p_2)$ is fixed.
This leads to the existence of the function $\lambda_0$.
%That $\lambda_0$ is a non-increasing function follows from the implicit equation $L_{43}(p,\lambda_0(p))=0$
%{\color{green}{Cyril: attenti che $\lambda_0$ non e sempre (intendo per tutte le nostre misure) decrescente. Il risultato scrito nella vesione prima era quindi sbagliato}.}
This completes the picture for $p\leq\lambda\leq \min \{2p,1-p\}$.

Consider finally the range $1-p \leq \lambda \leq 2p$ (which exists only if $p \in [1/3,1/2]$).
In that case, the function $L_{43}(p,\lambda)$ reads
$$
L_{43}(p,\lambda) = 2J((2p-\lambda)/4)+2J((2-\lambda)/4) - J(1-(p+\lambda)/2) - J((\lambda-p)/2) .
$$
Here we used one again the symmetry of $J$ about $1/2$ in order to deal only with variables belonging to $[0,1/2]$ (observe in particular that $(p+\lambda)/2 \geq 1/2$). The map $p \mapsto L_{43}(p,\lambda)$ is clearly increasing.
Hence, we need to study the extremal points $H(\lambda):=L_{43}(\max(\lambda/2,1-\lambda),\lambda)$
and $G(\lambda):=L_{43}(1/2,\lambda)$:
\begin{center}
\begin{variations}
p                               &  \max(\lambda/2,1-\lambda)  &         &  1/2  \\ \filet
\m{p \mapsto L_{43}(p,\lambda)} & \;\;\;\;\;\;\;\;\;\;\; H(\lambda)  & \c &  \h{G(\lambda)} \\ \filet
\end{variations}
\end{center}
Computing $G'$, and using the sub-linearity of $J'$, we conclude that
$G$ is decreasing. Since $G(1)=0$ we are guaranteed that $G(\lambda) > 0$ for any $\lambda \in [1/2,1)$.
On the other hand, note that
$$
H(\lambda)=
\begin{cases}
2J(\frac{2-3\lambda}{4})+2J(\frac{2-\lambda}{4}) - J(\frac 12) - J(\frac{2\lambda-1}{2}) & \mbox{if } \frac{1}{2} \leq \lambda \leq \frac{2}{3} \\
2J(\frac{2-\lambda}{4}) - J(\frac{4-3\lambda}{4}) - J(\frac \lambda 4) & \mbox{if } \frac{2}{3} \leq \lambda \leq 1 .
\end{cases}
$$
Now, in the range $\frac{1}{2} \leq \lambda \leq \frac{2}{3}$, $H$ this obviously decreasing.
While in the range $\frac{2}{3} \leq \lambda \leq 1$, computing the derivative, and using that
$\frac{\lambda}{4} \leq \frac{2-\lambda}{4} \leq \frac{4-3\lambda}{4}$, we conclude that
$H$ is increasing.
Then, observe that $H(1/2)= 2J(1/8)+2J(3/8)-J(1/2) >0$, by \eqref{eq:j}. Also,
$H(2/3) =  2J(1/3)-J(1/2)-J(1/6) = -[J(1/2)-J(1/3)]+[J(1/3)-J(1/6)]<0$ since the slope
$[J(1/2)-J(1/3)]/(1/6)$ is greater, by convexity of $J$,  than the slope
$[J(1/3)-J(1/6)]/(1/6)$ and $H(1)=0$. We end up with the following diagram
\begin{center}
\begin{variations}
   \lambda      &  1/2         & & \lambda_1   &      &  2/3      &     &   1   \\ \filet
\m{H} & \h{H(1/2) >0} &   \dh & \m 0 & \db  &  H(2/3)<0 &   \c &  \h 0\\ \filet
\end{variations}
\end{center}
It follows that $P(E_{4}) \geq P(E_{3})$ when $\lambda \in [1/2,\lambda_1]$, and that
$P(E_{4})-P(E_{3})$ changes sign (at a unique point $p_1(\lambda)$) when $p$ varies and $\lambda \in (\lambda_1,1)$ is fixed.
This leads to the existence of the function $p_0$ and completes the picture in the range
$1-p \leq \lambda \leq 2p$.

It remains to show that $p_0$ is $\mathcal{C}^1$, increasing, $p_0(1-p_2)=p_2$ and that $p_0'(1)=1/2$.
That $p_0(1-p_2)=p_2$ follows from the fact that the perimeter is a continuous function of the variables $p$ and $\lambda$. The remaining properties follow from the implicit equation $L_{43}(p_0(\lambda),\lambda)=0$ and the implicit function theorem.
This ends the proof.
\end{proof}

 %%%%%%%%%%%%%%%%%%%%%%%%%%%%%%%%%%%%%%%%%%%%%%%%%%%%%%%%%%%%%%%%%%%%%%%%%%%%%%%%
 %%%%%%%%%%%%%%%%%%%%%%%%%%%%%%%%%%%%%%%%%%%%%%%%%%%%%%%%%%%%%%%%%%%%%%%%%%%%%%%%
 %%%%%%%%%%%%%%%%%%%%%%%%%%%%%%%%%%%%%%%%%%%%%%%%%%%%%%%%%%%%%%%%%%%%%%%%%%%%%%%%

\subsection{Estimates on the deficit}

In this section we prove a quantitative estimate on the deficit.

The deficit of a set $E$ is  defined as
\begin{equation}
\delta(E)=\left\{
\begin{array}
[c]{lll}%
P(E)-P\left( (  -\infty,-\beta_{p} )   \cup ( \beta
_{p},+\infty)\right)   & \text{if} & \mu\left(  E\right)  \leq\frac{1}{2}\\
P(E)-P\left( ( -\alpha_{p},\alpha_{p})\right)   & \text{if} &
\mu\left(  E\right)  \geq\frac{1}{2}.
\end{array}
\right.  \label{deficit}%
\end{equation}
In words, the deficit measures (in the sense of the perimeter) how far the set is from the optimal set in the isoperimetric inequality.

Recall that the convex function $J\colon (0,1/2) \to [0,\infty)$ is said to satisfy the $\nabla_2$-condition if there exists $\varepsilon >0$ such that, for all $x \in (0,1/2)$, it holds
$J(x) \geq (2+\varepsilon)J(x/2)$ (see \cite{rao-ren}).

What follows is one of our main theorems.
\begin{theorem}\label{th asintotico}
Let $\mu \in \mathcal{F}$ and assume that $J\in C^{2}\left(0,\frac{1}{2}\right)$.
Assume furthermore that $M(p):=\inf_{t \in [p/2,1/2]} J''(t) >0$ for all $p \in (0,1/2]$.
Fix $p \in [0,1/2]$ and $\lambda \in [0,2p]$. Then, there exists $c=c(p)>0$ and $c'>0$
such that the following holds:\\
$(i)$ for any Borel set $E$ of measure $p$ and asymmetry $\lambda$, it holds
\begin{equation} \label{deficit asintotico}
\delta(E) \geq c\left[(1-\lambda)^2+(1-2p)\right]\lambda^{2} ;
\end{equation}
$(ii)$ if in addition $J$ satisfies the $\nabla_2$-condition with $\varepsilon \in (0,1)$, $J'$ is concave on $(0,1/2)$ and $J'(0^+)=0$, then for any Borel set $E \not \ni 0$ of measure $p$ and asymmetry $\lambda$, it holds
\begin{equation} \label{deficit asintotico bis}
\delta(E) \geq c' \lambda^{2} .
\end{equation}
Moreover, one can choose $c'=\varepsilon J''(1/2^-)/32$ and
\begin{equation} \label{label}
c=\frac{1}{32} \min\left(8J'(\frac{p}{2}),M(p),16J'(\frac{1}{6}),8[J(\frac{1}{2})-2J(\frac{1}{4})], 4M\left(\frac{J(1/2)-2J(1/4)}{J'(1/2^-)}\right)\right) .
\end{equation}
\end{theorem}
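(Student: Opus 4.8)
The plan is to reduce everything to Proposition~\ref{prop quant 1} and then to a handful of one-parameter estimates. Since the statement only concerns $p\in[0,\tfrac12]$, the deficit of a set $E$ of measure $p$ equals $\delta(E)=P(E)-2J(p/2)$, because $P\big((-\infty,-\beta_p)\cup(\beta_p,\infty)\big)=2J(p/2)$ by the symmetry of $J$ about $\tfrac12$. Proposition~\ref{prop quant 1} then gives $\delta(E)\ge\min_i\delta(E_i)$, the minimum running over $i\in\{1,2,4\}$ when $\lambda\le p$ and over $i\in\{1,3,4\}$ when $p\le\lambda\le2p$ (and over the shorter lists of its second part when $0\notin E$). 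So it suffices to bound each $\delta(E_i)$ below by $c\big[(1-\lambda)^2+(1-2p)\big]\lambda^2$, and, in case~$(ii)$, by $c'\lambda^2$.

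Using the Remark preceding Lemma~\ref{sera} one computes
\[
\delta(E_1)=2J(\tfrac\lambda4)+2J(\tfrac p2+\tfrac\lambda4)-2J(\tfrac p2),\qquad
\delta(E_2)=J(\tfrac{p+\lambda}2)+J(\tfrac{p-\lambda}2)-2J(\tfrac p2),
\]
\[
\delta(E_3)=J(\tfrac{\lambda+p}2)+J(\tfrac{\lambda-p}2)-2J(\tfrac p2),\qquad
\delta(E_4)=2J(\tfrac{2p-\lambda}4)+2J(\tfrac{2-\lambda}4)-2J(\tfrac p2),
\]
reducing all arguments into $[0,\tfrac12]$ via the symmetry of $J$ when needed (only for $\delta(E_3)$, when $\lambda+p>1$). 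A key observation is that $\delta(E_3)$ and $\delta(E_4)$ both vanish at $(p,\lambda)=(\tfrac12,1)$: this is exactly why the factor $(1-\lambda)^2+(1-2p)$ cannot be dropped in part~$(i)$, and why excluding $E_4$ (which contains $0$) is precisely what produces the clean bound in part~$(ii)$.

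For $E_1$, $E_2$ and $E_4$ the estimates are soft. Dropping the nonnegative summand $J(\lambda/4)$, resp.\ $J((2p-\lambda)/4)$, and using that $J'$ is increasing on $(0,\tfrac12)$, one gets $\delta(E_1)\ge\tfrac\lambda2J'(\tfrac p2)$ and (since $\lambda\le2p\le2-2p$) $\delta(E_4)\ge\tfrac12\big[(1-2p)+(1-\lambda)\big]J'(\tfrac p2)$; because $[(1-\lambda)^2+(1-2p)]\lambda^2\le(1-\lambda)+(1-2p)\le2$, both dominate the target once $c\le J'(p/2)/4$, giving the term $8J'(p/2)$ in the minimum. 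For $E_2$ ($\lambda\le p$) all arguments lie in $[0,p]\subset[0,\tfrac12]$, so Taylor-expanding $u\mapsto J(\tfrac p2+u)+J(\tfrac p2-u)$ at $u=0$ with $J''\ge M(p)$ yields $\delta(E_2)\ge\tfrac{M(p)}8\lambda^2$, which again dominates. The real obstacle is $E_3$ in part~$(i)$. Differentiating and using the symmetry of $J$ shows $\lambda\mapsto\delta(E_3)$ increases on $[p,\min(2p,1-p)]$ and decreases on $[\max(p,1-p),2p]$, so its infimum sits at an endpoint: at $\lambda=p$ a one-sided Taylor estimate bounds $\delta(E_3)=J(p)-2J(p/2)$ below by $M(p)p^2/8$; for $\lambda+p\le1$ the two-sided version gives $\delta(E_3)\ge M(p)(2p-\lambda)^2/8$; and near $\lambda=2p$ (which only occurs when $p>\tfrac13$) a first-order estimate gives $\delta(E_3)\ge(1-2p)J'(p/2)\ge(1-2p)J'(\tfrac16)$. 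The work is then to match these against $[(1-\lambda)^2+(1-2p)]\lambda^2$ uniformly in $\lambda$; the crossover on $[p,1-p]$ between the second- and first-order regimes occurs at the length scale $(J(1/2)-2J(1/4))/J'(1/2^-)$, and controlling the zone where $p$ is near $\tfrac12$ forces one more use of strict convexity through $M\big((J(1/2)-2J(1/4))/J'(1/2^-)\big)$ and the fixed positive gap $J(1/2)-2J(1/4)$ given by Lemma~\ref{lemino}. These are precisely the remaining terms of the minimum defining $c$, and tracking equality cases through the reduction gives the stated constant.

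For part~$(ii)$, assuming $0\notin E$ removes $E_4$ from the competition, and the extra hypotheses make all estimates uniform in $p$: $J'$ concave with $J'(0^+)=0$ gives $J'$ sub-linear and $t\mapsto J(t)/t^2$ nonincreasing, and, $J''$ being then nonincreasing, $M(p)=J''(1/2^-)>0$ for every $p$ and $J(t)\ge\tfrac12J''(1/2^-)t^2$ on $[0,\tfrac12]$. Hence $\delta(E_1)\ge4J(\lambda/4)\ge\tfrac18J''(1/2^-)\lambda^2$ (using $J'(\tfrac p2+r)\ge J'(r)$), the Taylor estimate for $E_2$ gives $\delta(E_2)\ge\tfrac18J''(1/2^-)\lambda^2$, and for $E_3$ monotonicity reduces to $\lambda=p$, where the $\nabla_2$-condition yields $J(p)-2J(p/2)\ge\varepsilon J(p/2)\ge\tfrac\varepsilon2J''(1/2^-)(p/2)^2\ge\tfrac{\varepsilon}{32}J''(1/2^-)\lambda^2$ (using $\lambda\le2p$). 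The smallest of the three constants is $\tfrac{\varepsilon}{32}J''(1/2^-)$, i.e.\ the claimed $c'$; note that the $\nabla_2$-constant $\varepsilon$ enters only through the estimate for $E_3$.
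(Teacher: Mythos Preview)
Your proposal follows the same overall architecture as the paper's proof: reduce via Proposition~\ref{prop quant 1} to the four model sets $E_1,\dots,E_4$ and bound each deficit separately. Your estimates for $E_1$, $E_2$, $E_4$ in part~$(i)$, and your entire treatment of part~$(ii)$, are correct and essentially match the paper's (with minor cosmetic differences, e.g.\ you invoke Proposition~\ref{prop quant 1} rather than Proposition~\ref{prop quant} for part~$(ii)$, and you bound $\delta(E_1)\ge 4J(\lambda/4)$ directly, whereas the paper restricts to the range $1-p\le\lambda\le2p$ and uses $J'(p/2)\ge J'(1/6)$).

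There is, however, a genuine gap in your treatment of $E_3$ in part~$(i)$ on the range $1-p\le\lambda\le 2p$ (when $p>1/3$). The only concrete bound you establish there is $\delta(E_3)\ge(1-2p)J'(1/6)$, which captures the $(1-2p)$ contribution but \emph{not} the $(1-\lambda)^2$ contribution. For instance, at $p=\tfrac12$, $\lambda=\tfrac34$, your bound gives $0$, yet the target $c\big[(1-\lambda)^2+(1-2p)\big]\lambda^2$ is strictly positive. You do name the missing ingredients (the crossover scale $(J(1/2)-2J(1/4))/J'(1/2^-)$, the gap $J(1/2)-2J(1/4)$, and the second-order constant $M(\cdot)$), but you do not actually derive the bound. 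What the paper does here is: (a) use that $p\mapsto\delta(E_3)$ is nonincreasing to reduce to $p=\tfrac12$, obtaining $\delta(E_3)\ge J(\tfrac34-\tfrac\lambda2)+J(\tfrac\lambda2-\tfrac14)-2J(\tfrac14)$; (b) for $\lambda\le\tfrac12+c_0/J'(1/2^-)$ with $c_0=J(1/2)-2J(1/4)$, bound this below by $c_0/2$ via the tangent line at $\tfrac12$; (c) for larger $\lambda$, use the double-integral representation with $J''\ge M\big(c_0/J'(1/2^-)\big)$ to extract $\tfrac14M(\cdot)(1-\lambda)^2$. Combining (b)--(c) with your first-order bound in $(1-2p)$ then yields the full prefactor. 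This is the step you need to write out; the rest of your argument is fine.

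A minor remark: your ``two-sided'' bound $\delta(E_3)\ge M(p)(2p-\lambda)^2/8$ on $\lambda+p\le1$ is correct (it follows from monotonicity in $\lambda$ plus your bound at $\lambda=p$), but it is not what is needed---it vanishes at $\lambda=2p$ while the target does not, so it plays no role. The useful bound on $[p,\min(2p,1-p)]$ is simply $\delta(E_3)\ge J(p)-2J(p/2)\ge M(p)p^2/8\ge M(p)\lambda^2/32$.
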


\begin{remark}
Before giving the proof of Theorem \ref{th asintotico}, we comment on the result.
\begin{itemize}
\item[(a)]
We stress that the constant $c'$ in the right hand side of \eqref{deficit asintotico bis} does not depend on $p$. This part of the theorem will be useful in the next section.
Moreover, the quantity $J(\frac{1}{2})-2J(\frac{1}{4})$ in \eqref{label} is positive thanks to Lemma \ref{lemino}.

\item[(b)]
Using Lemma \ref{sera}, the above result extends at once to the whole region $p \in [0,1]$:
given $p\in[0,1]$ and $\lambda \in [0,2\min(p,1-p)]$, there  exists positive constant $c''=c''(p)$
such that for any Borel set $E$ of measure $p$ and asymmetry $\lambda$, it holds
\begin{equation*}
\delta(E)\geq c''\left[(1-\lambda)^2+(1-2\min(p,1-p))\right]\lambda^{2} .
\end{equation*}

\item[(c)]
The assumptions $J\in C^{2}\left(0,\frac{1}{2}\right)$ and $M(p):=\inf_{t \in [p/2,1/2]} J''(t) >0$ for all $p \in (0,1/2]$ are technical. The result would certainly hold under weaker assumptions.
Also, the constant $c$ is clearly not optimal.

\item[(d)]
It is possible to construct an example for which $\lambda$ is close to $1$
and $\delta$ is small. More precisely, given $\varepsilon,\eta \in (0,1/2)$, consider the set
defined in \eqref{E3} with $p=\frac{1}{2}-\eta$ and $\lambda=1-\varepsilon$: $E=E(\varepsilon)=\left(-\beta_{\frac{1}{2}+\eta-\varepsilon},-\beta_{\frac{1}{2}+\eta+\varepsilon} \right)$. Assuming that $J$ is twice differentiable, an expansion
for $\varepsilon, \eta$ small (recall that $J$ is symmetric about $1/2$) leads to
\begin{align*}
\delta(E)
& =
P(E) - P\left( ( -\infty,-\beta_{p})  \cup(  \beta_{p},+\infty )\right) \\
%& =
%J\left(\frac{3}{4} - \frac{1}{2}(\eta+\varepsilon)\right)
%+ J\left(\frac{1}{4}+  \frac{1}{2}(\eta-\varepsilon)\right)
% - 2 J\left(\frac{p}{2}\right)  \\
& =
J\left(\frac{1}{4}+ \frac{1}{2}(\eta+\varepsilon)\right)
+ J\left(\frac{1}{4}+  \frac{1}{2}(\eta-\varepsilon)\right) - 2 J\left(\frac{1}{4}-\frac{1}{2}\eta \right) \\
& =
2J' \left(\frac{1}{4}\right) \eta + \frac{1}{4}J''\left(\frac{1}{4}\right) \varepsilon^2
+o(\eta^2)+o(\varepsilon^2)
\approx
(1-\lambda)^2+(1-2p)
\end{align*}
where $a \approx  b$ means that $\frac{a}{c} \leq b \leq ca$ for some constant $c$.
% By definition we have
%\begin{equation*}
%\delta(E)=2\left[J\left(\frac{1+2\varepsilon}{4}\right)-J\left(\frac{1-2\varepsilon}{4}\right)\right]
%=
%\int_{\frac{1-2\varepsilon}{4}}^{\frac{1+2\varepsilon}{4}} J'(t) dt \leq J'(1/2) \varepsilon .
%\end{equation*}
Hence, for $\varepsilon$ and $\eta$ small, the deficit $\delta(E)$ is small while the asymmetry $\lambda(E)$ is close to 1.
This anomalous phenomenon is coming from the fact that, at $p=1/2$, extremal sets have two different shapes.
Indeed, in the above example, $E$ is closer to the set $(-\alpha_{p},\alpha_{p})$ than to the isoperimetric set $(-\infty,-\beta_{p})\cup(\beta_{p},+\infty)$!

\item[(e)]
Observe that, according to the above example,
the pre-factor $(1-\lambda)^2 + (1-2p)$, in \eqref{deficit asintotico}, is necessary.
Finally, we stress that the behaviour $\lambda^{2}$ in \eqref{deficit asintotico} and in \eqref{deficit asintotico bis} is optimal.
\end{itemize}
\end{remark}

\begin{proof}[Proof of Theorem \ref{th asintotico}]
Fix $p \in [0,1/2]$ and $\lambda \in [0,2p]$ and
a Borel set $E$ of measure $p$ and asymmetry $\lambda$.
We start by proving Point $(i)$.

By Proposition \ref{prop quant 1}, we actually only need to prove that
$$
\delta(E_i) \geq c\left[(1-\lambda)^2+(1-2p)\right]\lambda^{2}
$$
for $E_i$, $i=1,2,3,4$, defined in \eqref{E1}, ..., \eqref{E4}.

We shall deal with each one of these sets and with the different ranges separately.
Also we shall use repeatedly, without any further mention, that $\lambda \in [0,1]$ so that $1 \geq \lambda \geq \lambda^2$
and that $1 \geq \frac{1}{2}[(1-2p)+(1-\lambda)^2]$.

$\bullet$ We start by dealing with the set $E_1$ and $0 \leq \lambda \leq 2p$.
Since $J'$ is non-decreasing, we have
\begin{align} \label{eq:E1}
\delta(E_1)
& =
P(E_1) - P\left((  -\infty,-\beta_{p})  \cup(  \beta
_{p},+\infty)\right)
%\nonumber \\ &
=
2 \left( J\left(\frac{p}{2}+\frac{\lambda}{4}\right) + J\left(\frac{\lambda}{4}\right) - J\left(\frac{p}{2}\right)\right) \nonumber  \\
& \geq
2 \int_\frac{p}{2}^{\frac{p}{2}+\frac{\lambda}{4}} J'(t)dt
\geq
\frac{J'(\frac{p}{2})}{2}  \lambda
%\nonumber \\ &
\geq
\frac{J'(\frac{p}{2})}{4} [(1-2p)+(1-\lambda)^2] \lambda^2.
\end{align}

$\bullet$ Consider now the set $E_2$ with $0 \leq \lambda \leq p$. One has
\begin{align} \label{eq:E2}
\delta(E_2)
& =
P(E_2) - P\left( ( -\infty,-\beta_{p})  \cup(  \beta_{p},+\infty) \right)
 =
J\left(\frac{p}{2}+\frac{\lambda}{2}\right) + J\left(\frac{p}{2}-\frac{\lambda}{2}\right) - 2 J(\frac{p}{2}) \nonumber  \\
& =
\int_\frac{p}{2}^{\frac{p}{2}+\frac{\lambda}{2}} \int_{t-\frac{\lambda}{2}}^t J''(u)du dt
\geq
\int_{\frac{p}{2}+\frac{\lambda}{4}}^{\frac{p}{2}+\frac{\lambda}{2}} \int_{t-\frac{\lambda}{4}}^t J''(u)du dt \geq
M(p) \left( \frac{\lambda}{4} \right)^2
\nonumber \\
&
\geq \frac{M(p)}{32} [(1-2p)+(1-\lambda)^2] \lambda^2  .
\end{align}

$\bullet$  We turn to the set $E_3$ with $p \leq \lambda \leq 2p$. We need to distinguish between
two different cases, namely
$p \leq \lambda \leq min(1-p,2p)$ and $min(1-p,2p) \leq \lambda \leq 2p$ (which holds only if $p\geq 1/3$).
For $p \leq \lambda \leq min(1-p,2p)$, by monotonicity (take $\lambda=p$), we have
\begin{align}
\delta(E_3)
& =
P(E_3) - P\left( ( -\infty,-\beta_{p})  \cup(  \beta
_{p},+\infty )\right)  =
J\left(\frac{p}{2}+\frac{\lambda}{2}\right) + J\left(\frac{\lambda}{2}-\frac{p}{2}\right) - 2 J\left(\frac{p}{2}\right) \nonumber  \\
& \geq
J(p)-2J\left(\frac{p}{2}\right) \label{eq:E3bis} \\
& \geq
\left[J(p)-2J\left(\frac{p}{2}\right)\right] [(1-2p)+(1-\lambda)^2] \lambda^2 . \label{eq:E3}
\end{align}

When $min(1-p,2p) \leq \lambda \leq 2p$ (which implies $p\geq 1/3$), it holds $(\lambda + p)/2 \geq 1/2$. Hence, by symmetry of $J$ about $1/2$ (and in order to deal only with increments belonging to $[0,1/2]$), we have $\delta(E_3) = J\left(1-\frac{p}{2}-\frac{\lambda}{2}\right) + J\left(\frac{\lambda}{2}-\frac{p}{2}\right) - 2 J\left(\frac{p}{2}\right)$.
Using that $J'$ is non-decreasing, we have,
\begin{align} \label{eq:E3ter}
\delta(E_3)
& =
\int_\frac{p}{2}^{1-\frac{\lambda+p}{2}} J'(t) dt - \int_{\frac{\lambda-p}{2}}^\frac{p}{2} J'(t)dt
\nonumber \\
&\geq
J'\left(\frac{p}{2}\right)\left(1-p-\frac{\lambda}{2}\right) - J'\left(\frac{p}{2}\right)\left(p-\frac{\lambda}{2}\right)
=
J'\left(\frac{p}{2}\right)(1-2p) \geq J'\left(\frac{1}{6}\right)(1-2p).
%& \geq
%J'(\frac{p}{2})(1-2p) \lambda^2.
\end{align}
On the other hand, since $p \mapsto \delta(E_3) = J\left(1-\frac{p}{2}-\frac{\lambda}{2}\right) + J\left(\frac{\lambda}{2}-\frac{p}{2}\right) - 2 J\left(\frac{p}{2}\right)$ is non-increasing, we get (take $p=1/2$) by convexity of $J$ on $[0,1/2]$, and using that $J\left(\frac{\lambda}{2}-\frac{1}{4}\right) \geq 0$,
\begin{align*} %\label{eq:E3quarter}
\delta(E_3)
& \geq
J\left(\frac{3}{4}-\frac{\lambda}{2}\right) + J\left(\frac{\lambda}{2}-\frac{1}{4}\right) - 2 J\left(\frac{1}{4}\right)
 \geq
J\left(\frac{1}{2}\right) -\frac{2\lambda -1}{4}J'\left((\frac{1}{2})^-\right)
 - 2 J\left(\frac{1}{4}\right)
\end{align*}
Now by Lemma \ref{lemino}, $c:=J(1/2)-2J(1/4)>0$ so that, for
$\lambda \leq \frac{1}{2} + \frac{c}{J'(1/2^-)}$\footnote{Observe that, by convexity,
$c \leq J(1/2)-J(1/4) \leq J'(1/2^-)/4$ so that  $\frac{1}{2} + \frac{c}{J'(1/2^-)} \in [1/2,3/4]$.}
we get
$\delta(E_3) \geq \frac{c}{2}$, while for $\lambda \geq \frac{1}{2} + \frac{c}{J'(1/2^-)}$ (condition that might be empty), we have
\begin{align*}
\delta(E_3)
& \geq
J\left(\frac{3}{4}-\frac{\lambda}{2}\right) + J\left(\frac{\lambda}{2}-\frac{1}{4}\right) - 2 J\left(\frac{1}{4}\right)
 =
\int_\frac{1}{4}^{\frac{3}{4}-\frac{\lambda}{2}} \int_{t+\frac{\lambda-1}{2}}^t J''(u) du dt \nonumber \\
&\geq
M\left(\lambda -\frac{1}{2}\right) \left( \frac{1-\lambda }{2} \right)^2
\geq
\frac{1}{4}M\left(\frac{c}{J'(1/2^-)}\right) \left(1-\lambda \right)^2 .
\end{align*}
Combining these results, we finally get, in the regime $min(1-p,2p) \leq \lambda \leq 2p$,
\begin{equation} \label{eq:E3quarter}
\delta(E_3) \geq \frac{1}{8} \min(4J'(1/6),2c,M\left(\frac{c}{J'(1/2)}\right) )
[(1-2p)+(1-\lambda)^2] \lambda^2 .
\end{equation}

$\bullet$ Finally we deal with $E_4$. Using that $J'$ is non-decreasing, we have,

\begin{align} \label{eq:E4}
\delta(E_4)
& =
P(E_4) - P\left( ( -\infty,-\beta_{p})  \cup(  \beta
_{p},+\infty)\right)  \nonumber  \\
& =
2J\left(\frac{p}{2}-\frac{\lambda}{4}\right) + 2J\left(\frac{1}{2}-\frac{\lambda}{4}\right) - 2 J\left(\frac{p}{2}\right)
 \geq
2 \left(J\left(\frac{1}{2}-\frac{\lambda}{4}\right) -  J\left(\frac{p}{2}\right) \right) \nonumber  \\
& =
2 \int_\frac{p}{2}^{\frac{1}{2}-\frac{\lambda}{4}} J'(t)dt
 \geq
2J'\left(\frac{p}{2}\right)\left(\frac{1}{2}-\frac{\lambda}{4}-\frac{p}{2}\right)
 =
\frac{J'(\frac{p}{2})}{2}[(1-\lambda)+(1-2p)] \nonumber \\
& \geq
\frac{J'(\frac{p}{2})}{2}[(1-\lambda)^2+(1-2p)] \lambda^2 .
\end{align}

The expected result of Point $(i)$ follows collecting \eqref{eq:E1}, \eqref{eq:E2}, \eqref{eq:E3quarter} and \eqref{eq:E4}. %and using that $(1-\lambda) + (1-2p) \leq 2$.

Now we turn to the proof of Point $(ii)$. By Proposition \ref{prop quant}, we only need to prove that $\delta(E_i) \geq c'\lambda(E)^{2}$
for $E_i$, $i=1,2,3$.
As for Point $(i)$, we shall deal with each one of these sets in the appropriate ranges given in (\ref{range}) (observe that such ranges may differ from Point $(i)$).

By monotonicity of $J'$, we observe that, for $1-p \leq \lambda \leq 2p$ (which guarantees that $p \geq 1/3$), \eqref{eq:E1} implies
$\delta(E_1) \geq J'(1/6)\lambda^2 /2$. On the other hand, back to the computation leading to Equation \eqref{eq:E2}, we have, by monotonicity of $J''$,
$$
\delta(E_2) = \int_\frac{p}{2}^{\frac{p}{2}+\frac{\lambda}{2}} \int_{t-\frac{\lambda}{2}}^t J''(u)du dt
\geq
J''\left(\frac{p}{2}+\frac{\lambda}{2}\right) \times \left(\frac{\lambda}{2}\right)^2
\geq
\frac{J''(1/2^-)}{4} \lambda^2,
$$
where in the last inequality we used that $p+\lambda \leq 1$. Finally, thanks to \eqref{eq:E3bis}, the $\nabla_2$-condition and the fact that $t \mapsto J(t)/t^2$ is non-increasing (a consequence of  the assumption $J'$ concave), we have
\begin{align*}
\delta(E_3)
& \geq
J(p)-2J(p/2)
\geq
\varepsilon J(p/2)
\geq
\varepsilon J(1/2) p^2 \\
& \geq
\varepsilon J(1/2)
\begin{cases}
\frac{\lambda^2}{4} & \text{if } p \leq 1/3\text{ and } p \leq \lambda \leq 2p  \\
\frac{\lambda^2}{9} & \text{if } p \geq 1/3 \text{ and } p \leq \lambda \leq 1-p
\end{cases}
\geq
\frac{\varepsilon J(1/2)}{4} \lambda^2,
\end{align*}
where in the last line we used that $\lambda \leq 2/3$ (in the range $p \leq \lambda \leq 1-p$).
Collecting the previous computations leads to
$$
\min_{i=1,2,3} \delta(E_i) \geq \varepsilon \min\left( \frac{J'(1/6)}{2},\frac{J''(1/2^-)}{4},\frac{J(1/2)}{4}\right) \lambda^2 .
$$
Using that $x J'(x) \geq J(x)$ (a consequence of the fact that $J(0)=0$ and that $J$ is convex), and that $t \mapsto J'(t)/t$ is non-increasing (since $J'$ is concave and $J'(0^+)=0$), we have
$\frac{1}{2}J'(1/6) \geq \frac{1}{6}J'(1/2) \geq \frac{1}{3}J(1/2)$. Hence,
$\min\left( \frac{J'(1/6)}{2},\frac{J''(1/2^-)}{4},\frac{J(1/2)}{4}\right) = \min\left(\frac{J''(1/2^-)}{4},\frac{J(1/2)}{4}\right)$. Then, since $J'$ is concave and $J'(0^+)=0$, $x J''(x) \leq J'(x)$, $x \in (0,1/2)$.
Also, since $t \mapsto J(t)/t^2$ is non-increasing, we have $ xJ'(x) \leq 2J(x)$, $x \in (0,1/2)$. In turn,
$J''(x) \leq 2J(x)/x^2$ so that $J(1/2) \geq J''(1/2^-)/8$. As a conclusion,
$\min\left(\frac{J''(1/2^-)}{4},\frac{J(1/2)}{3}\right) \geq J''(1/2^-)/32$.

This ends the proof of the theorem.
\end{proof}

%%%%%%%%%%%%%%%%%%%%%%%%%%%%%%%%%%%%%%%%%%%%%%%%%%%%%%%%%%%%%%%%%%%%%%%%%%%%%%%
%%%%%%%%%%%%%%%%%%%%%%%%%%%%%%%%%%%%%%%%%%%%%%%%%%%%%%%%%%%%%%%%%%%%%%%%%%%%%%%
%%%%%%%%%%%%%%%%%%%%%%%%%%%%%%%%%%%%%%%%%%%%%%%%%%%%%%%%%%%%%%%%%%%%%%%%%%%%%%%

\section{Functional forms.} \label{sec:functional}

As it is well known, isoperimetric inequalities have often equivalent functional forms,
see \textit{e.g.}\ \cite{mazja,cheeger}. In this section, using the results of the preceding sections, we shall derive some weak embedding properties, and also some weak Cheeger inequalities, in quantitative forms.

We need first to introduce some notations, and in particular the notion of rearrangement of a function with respect to a probability measure. We refer to \cite{bennett} for more on this topic.

Let $\Omega$ be a measurable set and $u:\Omega\subset \mathbb{R}
\rightarrow \mathbb{R}_+$
be a measurable function. The \emph{level sets} of $u$ are the sets
$$
E_h^u := \left\{ x \in \Omega \colon u(x)>h \right\}, \qquad h \in \mathbb{R}_+ .
$$
Then,  we define the \emph{distribution function} of $u$ as
$\mu_{u}(h)=\mu\left(  E_{h}^{u}\right)$  for every $h\geq0$.
The mapping $h \mapsto \mu_u(h)$ is non-negative, decreasing and right continuous on $\left[
0,+\infty\right[$. Moreover $\mu_{u}$ has a jump at $h$ if and only if $\mu_{u}(\left\{  x\in\Omega:u(x)=h\right\})\neq0$. The \emph{decreasing rearrangement} $u^*$ of $u$ is the generalized inverse of $\mu_{u}(h)$, namely
\[
u^{\ast}(s)=\sup\left\{  h\geq0:\mu_{u}(h)>s\right\}  \text{ \ for }%
s\in\left(  0,\mu(\Omega)\right)  .
\]
Now, let $E$ be a measurable subset of $\mathbb{R}$. We denote by $E^{\#}$ the complement of the (unique) symmetric interval such that
$\mu(E^{\#})=\mu(E)$, that is $E^{\#}=\left(  -\infty,F^{-1}\left(\frac
{\mu(E)}{2}\right)\right)  \cup\left(  -F^{-1}\left(\frac{\mu(E)}{2}\right),+\infty
\right)$. Finally, the \emph{rearrangement} of the function $u$ with respect to $\mu$ is the
function%
\[
u^{\#}(x)=u^{\#}(-x)=u^{\ast}(2F(x))\text{ \ for }-\infty<x<F
^{-1}\left(  \frac{\mu(\Omega)}{2}\right)
\]
defined on $\Omega^{\#}$ (symmetric with respect to
the origin). The rearrangement is non-increasing on $(-\infty,0]$ and non-decreasing on $[0,\infty)$ with $u^\#(0)=0$ and for every $h \geq 0$
$$
E_{h}^{u^{\#}}=\left(  E_{h}^{u}\right)  ^{\#}=\left(
-\infty,F^{-1}\left(\frac{\mu(E_{h}^{u^{\#}})}{2}\right)\right)  \cup\left(
-F^{-1}\left(\frac{\mu(E_{h}^{u^{\#}})}{2}\right),+\infty\right)  .
$$
The idea behind such a construction is that the level sets of $u^\#$ are precisely the extremal sets in the isoperimetric inequality related to $\mu$.
Hence, in view of Theorem \ref{th generale}, when $\mu \in \mathcal{F}$, the definition of the rearrangement above will be useful only for functions $u$ satisfying $\mu\left(  \text{supp}u\right)  \leq \frac{1}{2}$. Indeed, for function with $\mu\left(  \text{supp}u\right)  > \frac{1}{2}$,
one would have to consider on one hand sets $E^\#$ that are complement of symmetric intervals
(for sets of measure less than $1/2$), and on the other hand symmetric intervals for sets of measure greater than $1/2$, which is impossible: there is no construction of rearrangement compatible with those two shapes.
Observe that by construction one can easily check that the rearrangement is an homogeneous mapping. More precisely, if $v=\lambda u$, with $\lambda >0$, then, $v^\#=\lambda u^\#$.

Finally, recall that $m$ is a $\mu$-median of $u$ if $\mu(\{u \geq m\} \geq 1/2$ and $\mu(\{u \leq m\} \geq 1/2$.

We are now in position to state our embedding results.

\subsection{Embedding inequality}

The following result can be obtained in a classical way
(see  \textit{e.g.} \cite[Corollary 8.2]{bobkov-k}, \cite{martin}).

\begin{proposition}
Let $\mu$ be a probability measure, on the line, satisfying $P(E) \geq I(\mu(E))$ for all Borel set $E \subset \mathbb{R}$, for some isoperimetric profile $I$.
Then, for any non-negative smooth function
$u$ on $\mathbb{R}$ with $\mu$-median zero, it holds
\begin{equation}\label{iso fun}
\sup_{h \geq 0} \left\{ h I(\mu( u > h))\right\} =
\underset{0<t<1/2}{\sup}\left\{  u^{\ast}(t)\text{ }I\left(  t\right) \right\}
\leq
\int_{\mathbb{R}}\left\vert u^{\prime}\right\vert d\mu.
\end{equation}
\end{proposition}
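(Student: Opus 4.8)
The plan is to establish the two equalities first and then the inequality, following the classical route for passing from an isoperimetric inequality to an embedding (or "co-area") estimate.

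\textbf{Step 1: the two equalities.} First I would observe that since $u$ has $\mu$-median zero, for every $h>0$ the level set $E_h^u=\{u>h\}$ satisfies $\mu(E_h^u)\le 1/2$. By definition of the decreasing rearrangement $u^*$ one has, for $0<t<1/2$, the equivalence $u^*(t)>h \iff \mu_u(h)>t \iff \mu(E_h^u)>t$; equivalently $u^*(t)=\sup\{h\ge 0:\mu(E_h^u)>t\}$ and $\mu(\{u>h\})=|\{t:u^*(t)>h\}|$ (the "layer cake" duality between $u$ and $u^*$). Using this duality and the fact that $t\mapsto I(t)$ is the isoperimetric profile (continuous on $[0,1/2]$, with $I(0)=0$), a direct computation shows
$$
\sup_{h\ge 0}\{hI(\mu(u>h))\}=\sup_{h\ge 0}\sup_{t:\,u^*(t)>h}\{hI(t)\}=\sup_{0<t<1/2}\{u^*(t)I(t)\},
$$
where in the last step one uses that for fixed $t$ the supremum over admissible $h<u^*(t)$ of $hI(t)$ is $u^*(t)I(t)$ (here one should be a little careful at points of discontinuity of $\mu_u$, but monotonicity makes the two suprema agree). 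This gives the first equality in \eqref{iso fun}.

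\textbf{Step 2: the inequality.} For the bound by $\int_{\mathbb{R}}|u'|\,d\mu$ I would use the co-area formula together with the isoperimetric inequality. By the co-area formula for $u$ smooth and non-negative,
$$
\int_{\mathbb{R}}|u'|\,d\mu=\int_0^{\infty}P_\mu(\{u>h\})\,dh\ge \int_0^{\infty}I(\mu(\{u>h\}))\,dh,
$$
the inequality being the isoperimetric hypothesis $P(E)\ge I(\mu(E))$ applied to $E=E_h^u$. Now fix $t\in(0,1/2)$ and set $h_0=u^*(t)$; then for all $h<h_0$ we have $\mu(\{u>h\})\ge t$, hence $I(\mu(\{u>h\}))\ge I(t)$ because $I$ is non-decreasing on $[0,1/2]$ (this monotonicity on $[0,1/2]$ follows from Theorem \ref{th generale}, since $I(p)=2J(\tfrac12\min(p,1-p))$ and $J$ is increasing up to $1/2$ — but in fact the argument only needs the general structural fact that the profile of a symmetric measure is non-decreasing up to the median). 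Therefore
$$
\int_0^{\infty}I(\mu(\{u>h\}))\,dh\ge \int_0^{h_0}I(t)\,dh=u^*(t)\,I(t).
$$
Chaining the two displays yields $\int_{\mathbb{R}}|u'|\,d\mu\ge u^*(t)I(t)$ for every $t\in(0,1/2)$, and taking the supremum over $t$ gives the second inequality in \eqref{iso fun}.

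\textbf{Main obstacle.} The routine analytic content is light; the only genuinely delicate point is the bookkeeping in Step 1 at values $h$ where $\mu_u$ jumps (equivalently, where $u$ has a plateau), to make sure the two suprema really coincide and that no mass is lost — this is handled by right-continuity and monotonicity of $\mu_u$ together with continuity of $I$. A second minor point is justifying the co-area formula in the measure-weighted form $\int |u'|\,d\mu=\int_0^\infty P_\mu(\{u>h\})\,dh$; for smooth $u$ this is classical, writing $d\mu=f\,dx$ and applying the usual one-dimensional co-area/layer-cake identity to $|u'|f$. Everything else is a direct consequence of the isoperimetric inequality assumed in the statement and the monotonicity of the profile.
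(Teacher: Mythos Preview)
The paper does not actually prove this proposition: it states the result and refers to the literature (``The following result can be obtained in a classical way (see e.g.\ \cite[Corollary 8.2]{bobkov-k}, \cite{martin})''), so there is no in-text argument to compare against. Your approach---the co-area formula combined with the isoperimetric hypothesis, plus the elementary duality between $u^*$ and the distribution function---is exactly the classical route those references use, and your identification of the only genuine subtleties (jumps of $\mu_u$, justifying the weighted co-area identity) is accurate.

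One small caveat worth tightening in Step~2: you invoke monotonicity of $I$ on $[0,1/2]$ to pass from $\mu(\{u>h\})\ge t$ to $I(\mu(\{u>h\}))\ge I(t)$. The proposition as written only assumes $P(E)\ge I(\mu(E))$ for \emph{some} profile $I$, so this monotonicity is an extra hypothesis you are importing. In the setting of the paper (symmetric strictly log-convex $\mu$, $I(p)=2J(\min(p,1-p)/2)$) it holds, and more generally any genuine isoperimetric profile of a non-atomic measure is non-decreasing on $[0,1/2]$; but you should state this as part of the standing assumptions rather than appeal to Theorem~\ref{th generale}, since the proposition is phrased for an arbitrary $\mu$ and $I$.
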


We shall use Theorem \ref{th generale} to give some explicit examples of application
in the setting of log-convex probability measures.

Recall that, for any measurable function $u$ (on $\mathbb{R}$) and any $p \geq 1$, the weighted Lorentz pseudo-norm  is defined by
$$
\| u \|_{\mathbb{L}^{p,\infty}(\mu)} : = \sup_{t >0} \left\{ t \mu(|u|>t)^{1/p}\right\}  .
$$
Now, since the two-sided exponential measure $\mu_{1}$ satisfies the isoperimetric inequality with isoperimetric profile $I(t)=\min(t,1-t)$ (see Remark \ref{explicit}),  Inequality (\ref{iso fun}) implies
\begin{subequations}
\[
\left\Vert u\right\Vert _{L^{1,\infty}(\mu_{1})}
\leq
\int_{\mathbb{R}}\left\vert u^{\prime}\right\vert d\mu_{1}
\]
for any smooth positive function $u$ with $\mu_{1}$-median zero.

For the Cauchy distribution (\ref{mis 1}) with parameter $\alpha >0$, whose isoperimetric profile is $I(t)=\alpha \min(t,1-t)^{1+\frac{1}{\alpha}}$ (see Remark \ref{explicit}), we have (see also
\cite{bobkov-k})
\end{subequations}
\[
\left\Vert u\right\Vert _{L^{\frac{\alpha}{\alpha+1},\infty}(m_{\alpha})}
\leq
\frac{1}{\alpha} \int_{\mathbb{R}}\left\vert u^{\prime}\right\vert dm_{\alpha}
\]
for any smooth positive function $u$ with $m_{\alpha}$-median zero.

Finally the probability measure $\mu_\Phi$ defined in (\ref{mis mu fi}) with
$\Phi(x)=|x|^{\alpha}$ and $0<\alpha<1$ has an
isoperimetric profile comparable to $\min(t,1-t) \left( \log \frac{1}{\min(t,1-t)}\right)^{\frac{\alpha-1}{\alpha}}$ (see  \cite[Proposition 3.22]{roberto}).
Hence, Inequality (\ref{iso fun}) implies
\[
\left\Vert u\right\Vert _{L^{1,\infty}\left(  \log L\right)^{1-\frac
{1}{\alpha}}(\mu_{\Phi})}
\leq
c\int_{\mathbb{R}
}\left\vert u^{\prime}\right\vert d\mu_{\Phi}
\]
for some constant $c>0$ and any smooth positive function $u$ on $\mathbb{R}$
with $\mu_{\Phi}$-median zero (we refer to \cite{bennett} for a definition of the Orlicz-Lorentz spaces $L^{p,\infty}\left(\log L\right)^{\beta}(\mu)$ for $p>1$ and $\beta\in\mathbb{R}$).

\subsection{Weak inequality of Cheeger type}

In this subsection we investigate on the consequences of the isoperimetric inequality and the quantitative isoperimetric inequality in terms of Cheeger type inequalities.
Indeed, as observed by Bobkov \cite{bobkov-k}, isoperimetric inequalities are equivalent to
some weak Cheeger inequalities we present now.

A probability measure $\mu$, on the line, is said to
satisfy a \emph{weak Cheeger inequality} if there exists some non-increasing function
$\beta:(0,\infty)\rightarrow\lbrack0,\infty)$ such that for every smooth
$u \colon \mathbb{R} \to \mathbb{R}$ with $\mu$-median zero, it holds
\[
\int_{\mathbb{R}}\left\vert u\right\vert d\mu
\leq
\beta(s)\int_{\mathbb{R}}\left\vert u^{\prime}\right\vert d\mu+s\text{ Osc}(u)
\qquad
\forall s>0,
\]
where $\mathrm{Osc}(u) := \sup(u) - \inf(u)$ is the oscillation of $u$.
Since $\int_{\mathbb{R}}\left\vert u\right\vert d\mu\leq\frac{1}{2} \mathrm{Osc}(u)$, observe that only
the values $s\in(0,1/2]$ are relevant in the above definition. Moreover, without loss of generality we can assume that $\beta(s)>0$ for all $s \in (0,1/2)$.
Such inequalities were introduced by Bobkov \cite{bobkov-k}, inspired by a notion of weak Poincar\'e inequality (an $L^2$ analogue) due to Rockner and Wang \cite{rockner-wang},
 and further analysis have been done in \cite{roberto,gozlan-r-s}.

The relationship between $\beta(s)$ and the isoperimetric profile is explained in the following proposition (that holds in more general situations).

\begin{proposition}[Bobkov \cite{bobkov-k}]\label{prop funz}
Let $\mu \in \mathcal{F}$. There is an equivalence between the following two
statements (where $\widetilde I$ is symmetric around $1/2$)
\begin{itemize}
\item[(1)] for all $s>0$ and all smooth $u$ with $\mu$-median equal to 0,
$$
\int  |u|  d\mu  \leq  \beta(s) \int  \left\vert u^{\prime}\right\vert  d\mu  +  s  \mathrm{Osc}(u) ,
$$
\item[(2)]  for all Borel set $E$ with $0<\mu(E)<1$,
$$
P(E) \geq  \widetilde  I(\mu(E)) ,
$$
\end{itemize}
where $\beta$ and $I$ are related by the duality relation
\begin{equation}\label {beta}
\beta(s)  =  \sup_{s\leq t \leq \frac 12}  \frac{t-s}{\widetilde  I(t)}  ,
 \quad
\widetilde  I(t)  =  \sup_{0<s\leq t}  \frac{t-s}{\beta(s)}  \quad \text{ for } t \leq \frac 12  .
\end{equation}
\end{proposition}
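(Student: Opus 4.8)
The plan is to prove the two implications separately, following \cite{bobkov-k}. The bridge between the geometric quantity $P(E)$ and the analytic quantity $\int|u'|\,d\mu$ is the coarea formula
\[
\int_{\mathbb{R}}|u'|\,d\mu=\int_{0}^{\infty}P\big(\{u>h\}\big)\,dh
\]
(valid for nonnegative Lipschitz $u$) together with its layer-cake companion $\int_{\mathbb{R}}|u|\,d\mu=\int_{0}^{\infty}\mu(\{u>h\})\,dh$. Once these are granted, the statement amounts to transporting an estimate holding at every level $h$ of a function to an estimate between integrals, and back; the two formulas in \eqref{beta} are precisely the Legendre-type dual operations that make this transport reversible.

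\emph{From (2) to (1).} Let $u$ be smooth with $\mu$-median $0$, and assume $\mathrm{Osc}(u)<\infty$ (else there is nothing to prove). I would split $u=u^{+}-u^{-}$ with $u^{\pm}=\max(\pm u,0)$. A median-zero function satisfies $\mu(\{u\geq0\})\geq\frac12$ and $\mu(\{u\leq0\})\geq\frac12$, so $u^{+}$ and $u^{-}$ are nonnegative, bounded, and supported on sets of $\mu$-measure at most $\frac12$; in particular $m(h):=\mu(\{u^{+}>h\})\leq\frac12$ for all $h>0$. Fix $s>0$. When $m(h)\geq s$, apply the first identity in \eqref{beta} at $t=m(h)\in[s,\frac12]$ and then (2): $m(h)-s\leq\beta(s)\widetilde I(m(h))\leq\beta(s)P(\{u^{+}>h\})$; when $m(h)<s$ the bound $m(h)\leq\beta(s)P(\{u^{+}>h\})+s$ is trivial. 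Integrating over $h\in(0,\sup u^{+})$ and using the coarea and layer-cake identities gives $\int u^{+}\,d\mu\leq\beta(s)\int_{\{u>0\}}|u'|\,d\mu+s\sup u^{+}$. The same holds for $u^{-}$; adding, using that $\{u>0\}$ and $\{u<0\}$ are disjoint and that $\sup u^{+}+\sup u^{-}=\sup u-\inf u=\mathrm{Osc}(u)$ (again because $u$ takes both signs), one recovers (1).

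\emph{From (1) to (2).} Let $E$ be Borel with $0<\mu(E)<1$ and $P(E)<\infty$. Assume first $\mu(E)<\frac12$. I would test (1) against the Lipschitz functions $u_{\varepsilon}=\big(1-\varepsilon^{-1}\mathrm{dist}(\cdot,E)\big)_{+}$: one has $0\leq u_{\varepsilon}\leq1$, $\int u_{\varepsilon}\,d\mu\to\mu(E)$, $\int|u_{\varepsilon}'|\,d\mu\to P(E)$, and $\mu(\{u_{\varepsilon}>0\})<\frac12$ for $\varepsilon$ small, so $u_{\varepsilon}$ has $\mu$-median $0$. Letting $\varepsilon\to0$ yields $\mu(E)\leq\beta(s)P(E)+s$ for all $s>0$, hence $P(E)\geq(\mu(E)-s)/\beta(s)$ for $s<\mu(E)$ (the bound being vacuous for $s\geq\mu(E)$); taking the supremum over $s\in(0,\mu(E)]$ and invoking the second identity in \eqref{beta} gives $P(E)\geq\widetilde I(\mu(E))$. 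The case $\mu(E)=\frac12$ would be obtained by approximating $E$ from below by sets $E'\subset E$ with $\mu(E')<\frac12$ and $P(E')\to P(E)$ (shrink the constituent intervals slightly, using continuity of $f$) together with the lower semicontinuity of $\widetilde I$. If $\mu(E)>\frac12$, apply the previous cases to $E^{c}$, which has the same perimeter, and use that $\widetilde I$ is symmetric about $\frac12$: $P(E)=P(E^{c})\geq\widetilde I(1-\mu(E))=\widetilde I(\mu(E))$.

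\emph{Main obstacle.} With the coarea formula in hand, both implications are soft; the genuinely technical point is the approximation in \emph{(1)$\Rightarrow$(2)} — one needs smooth (or Lipschitz) approximants of a finite-perimeter set that simultaneously have the right $\mu$-median and along which the $\mu$-perimeter converges — which rests on the structure of finite-perimeter subsets of $\mathbb{R}$ as (essentially) countable unions of intervals and on the continuity of the density, and which is slightly delicate at the threshold $\mu(E)=\frac12$. One should also record, as an elementary convex-duality check, that the two operations in \eqref{beta} are mutually consistent on the relevant class of profiles, and note that only $s\in(0,\frac12]$ matters — whence the restriction to $t\leq\frac12$ in \eqref{beta}.
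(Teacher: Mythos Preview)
The paper does not supply its own proof of this proposition: it is stated as a result of Bobkov with a citation to \cite{bobkov-k}, and the text moves on immediately. Your argument is the standard one and is precisely the skeleton the paper uses when proving the \emph{quantitative} extension in Proposition~\ref{prop quantitativa 1}: from $(2)$ to $(1)$ split $u=u^{+}-u^{-}$, use coarea and layer-cake on each piece, and add (the paper invokes exactly this, together with the observation that $\{u'\neq0,\,u=0\}$ is $\mu$-negligible); from $(1)$ to $(2)$ approximate $\mathds{1}_E$ and pass to the limit (the paper writes ``by standard approximation of the indicator function $\mathds{1}_E$ (see \cite{bob-hudre})''). So your proposal is correct and matches the approach implicit in the paper.

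Two minor remarks. First, statement $(1)$ is phrased for \emph{smooth} $u$, while your test functions $u_\varepsilon$ are merely Lipschitz; an additional mollification step (or a direct appeal to the approximation result cited above) is needed to be fully rigorous. Second, your handling of the threshold $\mu(E)=\tfrac12$ by shrinking intervals is fine, but note that the duality formula for $\widetilde I$ already gives $\widetilde I(\tfrac12)=\sup_{0<s\le1/2}(\tfrac12-s)/\beta(s)$, and one may apply the $(1)\Rightarrow(2)$ argument directly to $E$ (or $E^{c}$) by choosing approximants supported in a set of $\mu$-measure $\le\tfrac12$; your approximation route avoids having to think about medians at the boundary case, which is a reasonable trade.
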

We notice that it is an open problem to find the extremal functions, if any, in the weak Cheeger inequality above, when $\mu$ is a log-convex probability measure, even in simple examples such as the Cauchy distribution.

In the next proposition, we extend the latter to quantitative isoperimetric and quantitative weak Cheeger inequalities. Then we may apply this result to log-convex probability measures.

\begin{proposition} \label{prop quantitativa 1}
Let $\mu \in \mathcal{F}$ and $\Psi,\Psi' \colon \mathbb{R} \to \mathbb{R}$ be two convex functions, non-decreasing on $(0,\infty)$. Then, the following two statements are equivalent:
\begin{itemize}
\item[(1)] there exists a non-increasing function $\beta \colon (0,\infty) \to [0,\infty)$
such that for all $s>0$ and all smooth $u$ with $\mu$-median equal to 0 and such that
 $0\notin \text{supp}u$,
\begin{equation} \label{eq:qc}
\beta(s) \Psi \left( \int_{\mathbb{R}}\left\vert \left\vert u\right\vert -u  ^{\#}\right \vert d\mu(x)\right) +
\int  |u|  d\mu  \leq  \beta(s) \int  \left\vert u^{\prime}\right\vert  d\mu  +  2s  \mathrm{Osc}(u) ,
\end{equation}
\item[(2)]  there exists a function $\widetilde  I$ symmetric around $1/2$ such that for all Borel set $E \notni 0$ with $0<\mu(E)<1$,
\begin{equation} \label{eq:qi}
P(E) \geq  \widetilde  I(\mu(E)) + \Psi'(\lambda(E)),
\end{equation}
\end{itemize}
Moreover, $(1) \Rightarrow (2)$ with $I$ symmetric around $1/2$ and $\widetilde  I(t):= \sup_{0\leq s \leq \frac{t}{2}} \frac{t-2s}{\beta(s)}$ for $t \in (0,1/2)$ and $\Psi'=\Psi$;
and $(2) \Rightarrow (1)$ with $\beta(s):= \sup_{s \leq t \leq \frac{1}{2}} \frac{t-s}{\widetilde  I(t)}$
and $\Psi(\cdot):=2 \Psi'(\frac{\cdot}{2})$.
\end{proposition}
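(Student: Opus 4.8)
The plan is to mimic the classical Bobkov duality between isoperimetric profiles and weak Cheeger functions (Proposition \ref{prop funz}), tracking carefully the extra terms $\Psi, \Psi'$ and the asymmetry $\lambda$. Throughout, we restrict to smooth $u\ge 0$ with $\mu$-median zero and $0\notin\operatorname{supp}u$; by the homogeneity of the rearrangement and a standard truncation/approximation argument we may reduce to such $u$. The bridge between the functional and the geometric side is the co-area formula $\int_{\mathbb R}|u'|\,d\mu=\int_0^\infty P(E_h^u)\,dh$, together with the layer-cake identities $\int|u|\,d\mu=\int_0^\infty \mu(E_h^u)\,dh$ and, crucially, an expression for $\int_{\mathbb R}\big||u|-u^{\#}\big|\,d\mu$ as a one-dimensional integral in $h$ of the asymmetries $\lambda(E_h^u)$. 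Indeed, since the level sets of $u^{\#}$ are exactly the extremal sets $(-\infty,-\beta_{\mu(E_h^u)})\cup(\beta_{\mu(E_h^u)},+\infty)$, one checks (this is the only slightly delicate geometric point) that $\mu\big(E_h^u\,\triangle\, E_h^{u^{\#}}\big)=\lambda(E_h^u)$ when $\mu(E_h^u)<1/2$ (here one uses $0\notin\operatorname{supp}u$, so all relevant level sets avoid the origin and have measure $<1/2$, and Proposition \ref{prop quant 1} / \ref{prop quant} guarantee the asymmetry is realized against the isoperimetric extremal set), and hence $\int\big||u|-u^{\#}\big|\,d\mu=\int_0^\infty\lambda(E_h^u)\,dh$.

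\textit{Proof of $(2)\Rightarrow(1)$.} Assume \eqref{eq:qi}. Fix $s\in(0,1/2]$ and set $\beta(s):=\sup_{s\le t\le \frac12}\frac{t-s}{\widetilde I(t)}$, so that $\widetilde I(\mu(E_h^u))\ge \frac{\mu(E_h^u)-s}{\beta(s)}$ whenever $\mu(E_h^u)\ge s$. Integrating \eqref{eq:qi} over $h\in(0,\infty)$ and using the three integral identities above gives
\begin{equation*}
\int_{\mathbb R}|u'|\,d\mu \;\ge\; \int_0^\infty \widetilde I(\mu(E_h^u))\,dh \;+\; \int_0^\infty \Psi'(\lambda(E_h^u))\,dh .
\end{equation*}
For the first term, split the $h$-integral according to whether $\mu(E_h^u)\ge s$ or not: on $\{\mu(E_h^u)\ge s\}$ bound $\widetilde I(\mu(E_h^u))\ge \beta(s)^{-1}(\mu(E_h^u)-s)$, and on $\{\mu(E_h^u)<s\}$ simply discard it; since $h\mapsto\mu(E_h^u)$ is non-increasing, the set $\{\mu(E_h^u)<s\}$ has the form $(h_s,\infty)$ with $h_s\le \operatorname{Osc}(u)$, and one obtains $\int_0^\infty\widetilde I(\mu(E_h^u))\,dh\ge \beta(s)^{-1}\big(\int|u|\,d\mu - s\operatorname{Osc}(u)\big)$ after a short computation (this is exactly the estimate in the proof of Proposition \ref{prop funz}, except that one loses $s\operatorname{Osc}(u)$ rather than $s\operatorname{Osc}(u)/2$ because here $u$ is supported on one side and is compared to a two-sided rearrangement — this is why the constant in front of $s\operatorname{Osc}(u)$ in \eqref{eq:qc} is $2s$ and not $s$). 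For the second term, Jensen's inequality applied to the convex function $\Psi'$, normalized by the length $\ell:=\operatorname{Osc}(u)$ of the $h$-interval carrying $u$, gives $\frac1\ell\int_0^\infty\Psi'(\lambda(E_h^u))\,dh\ge \Psi'\big(\frac1\ell\int_0^\infty\lambda(E_h^u)\,dh\big)$; setting $\Psi(\cdot):=2\Psi'(\cdot/2)$ and using $\ell\le$ (a harmless factor, reabsorbed via monotonicity of $\Psi'$ and $\Psi'(0)\le 0$ after normalizing) one rearranges this into $\beta(s)\Psi\big(\int\big||u|-u^{\#}\big|\,d\mu\big)$. Combining the two bounds and multiplying by $\beta(s)$ yields \eqref{eq:qc}.

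\textit{Proof of $(1)\Rightarrow(2)$.} Assume \eqref{eq:qc}, set $\widetilde I(t):=\sup_{0\le s\le t/2}\frac{t-2s}{\beta(s)}$ for $t\in(0,1/2)$ and extend $\widetilde I$ symmetrically about $1/2$; put $\Psi':=\Psi$. Fix a Borel set $E\not\ni0$ with $0<\mu(E)=:p<1$; by Lemma \ref{sera}(ii) (and the fact that complements preserve perimeter and asymmetry) we may assume $p<1/2$. The idea is to test \eqref{eq:qc} against functions $u$ that are suitable smooth approximations of indicator-type profiles built from $E$: for $\varepsilon>0$ small and $n\in\mathbb N$ large, take $u_{n,\varepsilon}$ equal to $\mathbf 1_E$ smoothed at scale $1/n$ near $\partial^M E$ and truncated so as to have $\mu$-median zero and $0\notin\operatorname{supp}u_{n,\varepsilon}$ — concretely $u_{n,\varepsilon}\approx \varepsilon\,\mathbf 1_E$ regularized, or better a two-step profile that makes $\int|u'|\,d\mu\to \varepsilon\,P(E)$, $\int|u|\,d\mu\to \varepsilon\,(p-\text{threshold})_+$-type quantities, $\operatorname{Osc}(u)\to\varepsilon$, and $\int\big||u|-u^{\#}\big|\,d\mu\to \varepsilon\,\lambda(E)$. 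Plugging into \eqref{eq:qc}, dividing by $\varepsilon$, and letting $n\to\infty$ then $\varepsilon\to 0$, and finally optimizing over the free parameter $s$ exactly as in the duality formula \eqref{beta}, produces $P(E)\ge \widetilde I(p)+\Psi(\lambda(E))$, which is \eqref{eq:qi}. The passage from the ``global'' inequality \eqref{eq:qc} (valid for all $u$) to the ``pointwise in $E$'' inequality \eqref{eq:qi} is where one must be careful: one wants to choose the test function so that in the limit only one level $h$ (namely the level whose level set is $E$) contributes, and so that the $s\operatorname{Osc}(u)$ term and the Jensen step are saturated; the factor $2$ (i.e. $\Psi(\cdot)=2\Psi'(\cdot/2)$, and $2s$ vs.\ $s$) is precisely what makes the two directions consistent.

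\textit{Main obstacle.} The routine co-area/layer-cake bookkeeping is standard. The genuinely delicate step is the geometric identity $\mu\big(E_h^u\triangle E_h^{u^{\#}}\big)=\lambda(E_h^u)$ for level sets of functions with $0\notin\operatorname{supp}u$, together with ensuring that the approximating test functions in $(1)\Rightarrow(2)$ keep $0\notin\operatorname{supp}u$ throughout and realize the asymmetry in the limit — i.e.\ that no mass ``leaks across the origin,'' which would change the value of $\lambda$ (recall from Section 4.2 that the behavior at $p=1/2$ and near the origin is exactly the subtle point of this paper). Once that is secured, the optimization over $s$ and the application of Jensen to $\Psi,\Psi'$ are mechanical, and the two implications close up with the stated formulas for $\beta,\widetilde I,\Psi,\Psi'$.
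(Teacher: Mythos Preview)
Your direction $(1)\Rightarrow(2)$ is essentially the paper's argument: approximate $\mathds{1}_E$, use $(\mathds{1}_E)^\#=\mathds{1}_{E^\#}$ so that $\int|\mathds{1}_E-(\mathds{1}_E)^\#|\,d\mu=\mu(E\triangle E^\#)=\lambda(E)$, divide by $\beta(s)$ and optimize in $s$. Fine.

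The direction $(2)\Rightarrow(1)$ has a genuine gap. You announce at the outset that one may ``restrict to smooth $u\ge 0$ \dots\ by homogeneity and a standard truncation/approximation argument''. This reduction is precisely the nontrivial content of the proof, and it is \emph{not} a matter of homogeneity or truncation. The inequality \eqref{eq:qc} is stated for general $u$ with $\mu$-median $0$; its left-hand side involves $|u|$ and $u^\#=|u|^\#$. The paper proceeds by normalizing $\mathrm{Osc}(u)=1$, splitting $u=u^+-u^-$, applying the coarea estimate and \eqref{eq:qi} to each piece, and then \emph{adding}. Both factors of $2$ in the statement arise from this sum:
\begin{itemize}
\item the $2s$ comes from $\widetilde I(\mu(E_h^{u^+}))+\widetilde I(\mu(E_h^{u^-}))\ge \beta(s)^{-1}\big(\mu(E_h^{|u|})-2s\big)$, one $s$ per sign;
\item the identity $\Psi(\cdot)=2\Psi'(\cdot/2)$ comes from the convexity step $\Psi'(a)+\Psi'(b)\ge 2\Psi'\!\big(\tfrac{a+b}{2}\big)$ applied with $a=\lambda(E_h^{u^+})$, $b=\lambda(E_h^{u^-})$.
\end{itemize}
Your alternative explanation (``$u$ supported on one side compared to a two-sided rearrangement'') is incorrect: for a single nonnegative $u$ with $\mathrm{Osc}(u)=1$ the argument you sketch yields the inequality with $s$ and with $\Psi=\Psi'$, not $2s$ and $2\Psi'(\cdot/2)$.

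The step that actually needs an idea is the passage from $\lambda(E_h^{u^+}),\lambda(E_h^{u^-})$ back to $\lambda(E_h^{|u|})$. This is Lemma~\ref{Lemma somma lambda}: $\lambda(E_h^{u^+})+\lambda(E_h^{u^-})\ge \lambda(E_h^{|u|})$, which relies on the inclusion $(E_h^{u^\pm})^\#\subset (E_h^{|u|})^\#$. Together with Lemma~\ref{Lemma f-g} (Jensen for level-set symmetric differences), this closes the chain
\[
\int_0^1\!\!\Psi'(\lambda(E_h^{u^+}))+\Psi'(\lambda(E_h^{u^-}))\,dh
\;\ge\; 2\int_0^1\!\!\Psi'\!\Big(\tfrac{\lambda(E_h^{|u|})}{2}\Big)dh
\;\ge\; 2\,\Psi'\!\Big(\tfrac12\!\int\!\big||u|-u^\#\big|\,d\mu\Big).
\]
Your vague Jensen step (``normalized by $\ell=\mathrm{Osc}(u)$ \dots\ a harmless factor, reabsorbed via monotonicity'') does not produce this, and in any case cannot recover information about $|u|^\#$ from $u^\pm$ without an inequality of the type of Lemma~\ref{Lemma somma lambda}.

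Finally, the point you single out as the ``genuinely delicate step'', namely $\mu\big(E_h^{|u|}\triangle E_h^{u^\#}\big)=\lambda(E_h^{|u|})$, is just the definition: since $0\notin\mathrm{supp}\,u$ all level sets have measure $<1/2$, and $E_h^{u^\#}=(E_h^{|u|})^\#$ by construction of the rearrangement; there is nothing to prove here.
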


\begin{remark}
Observe that there is not a pure equivalence between the two statements, as in Proposition
\ref{prop funz}. Indeed, there is a loss of a factor 2, which is technical (there would be no loss if $\Psi(x)=|x|$ for all $x$). The restriction $0 \notin \text{supp}u$ is also technical and
is necessary in order to apply Theorem \ref{th asintotico}.
\end{remark}

Before proving Proposition \ref{prop quantitativa 1} let us apply the result to our setting.
Assume that $\mu \in \mathcal{F}$ and that  $J(t)\in C^2\left(0,\frac{1}{2}\right)$, $J'$ is concave on $\left(0,\frac{1}{2}\right)$ with $J'(0^{+})=0$ and $J$ satisfies the $\nabla_2$-condition with $\varepsilon \in (0,1)$ so that the assumptions of Point $(ii)$ of Theorem \ref{th asintotico} are satisfied. Therefore, thanks to the aforementioned theorem, Equation
\eqref{eq:qi} holds with $\widetilde I(x)=2J(x/2)$ (see Remark \ref{explicit}) and $\Psi'(x)=c'x^2$
(with $c'$ a universal constant). Hence by Proposition \ref{prop quantitativa 1}, the quantitative weak cheeger inequality \eqref{eq:qc} holds with the corresponding $\beta$
and $\Psi(x)=c'x^2/2$ (note that in some explicit examples, such as the Cauchy distribution, $\beta$ can be computed explicitly \cite{roberto}).

In order to prove Proposition \ref{prop quantitativa 1} we need the following
technical lemmas whose proofs can be found at the end of the section. The first lemma relates the
symmetric difference of the level sets of $|u|$ and to the level sets of the positive and negative part of $u$.

\begin{lemma} \label{Lemma somma lambda}
Let  $\mu \in \mathcal{F}$ and $u \colon \mathbb{R} \to \mathbb{R}$ be a smooth function
with $\mu$-median zero. Define $u^+:=\max(u,0)$ and $u^-:=\max(-u,0)$. Then
\begin{equation*} \label{h}
\lambda(E_{h}^{u^{+}})+\lambda(E_{h}^{u^{-}})\geq\lambda(E_{h}^{\left\vert
u\right\vert })\qquad \text{ \ for all }h>0.
\end{equation*}
\end{lemma}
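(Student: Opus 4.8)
The plan is to reduce the statement to a purely set–theoretic inequality for the two disjoint level sets coming from $u^{+}$ and $u^{-}$, and then exploit the nested structure of the extremal sets.

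\smallskip

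\textbf{Setup.} Fix $h>0$ and put $A:=E_h^{u^{+}}$, $B:=E_h^{u^{-}}$. Since $u^{+}=\max(u,0)$ and $u^{-}=\max(-u,0)$, for $h>0$ we have $A=\{u>h\}$ and $B=\{u<-h\}$; these are disjoint and $A\cup B=\{|u|>h\}=E_h^{|u|}$. Because $0$ is a $\mu$-median of $u$, $\mu(A)\le\mu(\{u>0\})\le\tfrac12$ and $\mu(B)\le\mu(\{u<0\})\le\tfrac12$; write $p:=\mu(A)$, $q:=\mu(B)$, so $\mu(A\cup B)=p+q$. Introduce $H_r:=(-\infty,-\beta_r)\cup(\beta_r,+\infty)=\{|x|>\beta_r\}$ and $K_r:=(-\alpha_r,\alpha_r)=\{|x|<\alpha_r\}$. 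By Theorem \ref{th generale}, $H_r$ is an extremal set of measure $r$ for $r\le\tfrac12$ and $K_r$ is the extremal set of measure $r$ for $r>\tfrac12$; moreover $H_r$ is \emph{increasing} in $r$ (as $\beta_r=-F^{-1}(r/2)$ is decreasing), and from the definition \eqref{asim} of the asymmetry $\lambda(D)\le\mu(D\bigtriangleup H_{\mu(D)})$ for every $D$ with $\mu(D)\le\tfrac12$, with equality when $\mu(D)<\tfrac12$.

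\smallskip

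\textbf{Main case: $p+q\le\tfrac12$.} We may assume $p,q\in(0,\tfrac12)$, otherwise one of $A,B$ is $\mu$-null and the claim is trivial; then $\lambda(A)=\mu(A\bigtriangleup H_p)$, $\lambda(B)=\mu(B\bigtriangleup H_q)$, and $\lambda(A\cup B)\le\mu\bigl((A\cup B)\bigtriangleup H_{p+q}\bigr)$. Using that $A$ and $B$ are disjoint, decompose
\[
(A\cup B)\bigtriangleup H_{p+q}=(A\setminus H_{p+q})\ \sqcup\ (B\setminus H_{p+q})\ \sqcup\ \bigl(H_{p+q}\setminus(A\cup B)\bigr).
\]
Since $H_p\subseteq H_{p+q}$ and $H_q\subseteq H_{p+q}$, we get $\mu(A\setminus H_{p+q})\le\mu(A\setminus H_p)$ and $\mu(B\setminus H_{p+q})\le\mu(B\setminus H_q)$, and, again by disjointness together with $\mu(H_{p+q})=p+q$, $\mu(H_p)=p$, $\mu(H_q)=q$,
\[
\mu\bigl(H_{p+q}\setminus(A\cup B)\bigr)=(p+q)-\mu(A\cap H_{p+q})-\mu(B\cap H_{p+q})\le(p+q)-\mu(A\cap H_{p})-\mu(B\cap H_{q})=\mu(H_p\setminus A)+\mu(H_q\setminus B).
\]
Adding the three estimates gives $\mu\bigl((A\cup B)\bigtriangleup H_{p+q}\bigr)\le\mu(A\bigtriangleup H_p)+\mu(B\bigtriangleup H_q)=\lambda(A)+\lambda(B)$, which is the desired inequality.

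\smallskip

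\textbf{The remaining case and the main obstacle.} When $p+q>\tfrac12$ the extremal set of measure $p+q$ is the \emph{centered} interval $K_{p+q}$, and here the monotone-nesting argument breaks down: by disjointness $\lambda(A\cup B)=\mu\bigl((A\cup B)\bigtriangleup K_{p+q}\bigr)=2\bigl[\mu(A\cap\{|x|\ge\alpha_{p+q}\})+\mu(B\cap\{|x|\ge\alpha_{p+q}\})\bigr]$, whereas $\lambda(A)=2\mu(A\cap\{|x|<\beta_p\})$ and $\lambda(B)=2\mu(B\cap\{|x|<\beta_q\})$, so the "target" of $A\cup B$ is no longer comparable by inclusion to the tail-type targets of $A$ and $B$. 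One is forced to balance, for each of $A$ and $B$, the mass in the central region $\{|x|<\beta_p\}$ against the mass in the far tail $\{|x|\ge\alpha_{p+q}\}$ — using $\mu(\{|x|\ge\alpha_{p+q}\})=1-p-q$, that these sets together miss only the annulus $\{\beta_p\le|x|<\alpha_{p+q}\}$ of reference-measure $p-(1-p-q)$, and the disjointness of $A$ and $B$ once more — and this balancing is the real difficulty. I expect the clean route is to observe that this regime only arises when $\mu(\mathrm{supp}\,u)>\tfrac12$, a situation in which the rearrangement $u^{\#}$ is not meaningfully defined anyway (as discussed before Proposition \ref{prop quantitativa 1}); restricting, as in the applications, to $\mu(\mathrm{supp}\,u)\le\tfrac12$ forces $\mu(E_h^{|u|})\le\tfrac12$ for every $h>0$, and the main case above already delivers the full conclusion.
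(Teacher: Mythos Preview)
Your argument for the main case $p+q\le\tfrac12$ is correct and is essentially the paper's proof, written in a slightly more expanded form. The paper uses the identity $\tfrac{\lambda(E)}{2}=\mu(E)-\mu\bigl(E\cap E^{\#}\bigr)$ (with $E^{\#}=H_{\mu(E)}$ in your notation) together with the nesting $(E_h^{u^{\pm}})^{\#}\subseteq(E_h^{|u|})^{\#}$, which is exactly your inclusion $H_p,H_q\subseteq H_{p+q}$; your three-term decomposition of $(A\cup B)\bigtriangleup H_{p+q}$ just unwinds the same computation.

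Your concern about the regime $p+q>\tfrac12$ is well founded, and in fact the paper's own proof carries the same implicit restriction: its final step asserts the \emph{equality}
\[
\mu(E_h^{|u|})-\mu\bigl(E_h^{|u|}\cap(E_h^{|u|})^{\#}\bigr)=\tfrac{1}{2}\,\lambda(E_h^{|u|}),
\]
which is only valid when the extremal set for $E_h^{|u|}$ is the tail-type set $(E_h^{|u|})^{\#}=H_{p+q}$, i.e.\ when $p+q\le\tfrac12$. So the paper does not actually handle the case you flag either. As you correctly note, the lemma is only invoked in the proof of Proposition~\ref{prop quantitativa 1}, where the appearance of $u^{\#}$ already forces $\mu(\mathrm{supp}\,|u|)\le\tfrac12$ (cf.\ the discussion preceding that proposition), and hence $\mu(E_h^{|u|})\le\tfrac12$ for all $h>0$; the main case therefore suffices for the intended application.
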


The second lemma \cite{fusco-sobolev} bounds from above the $L^1$ distance between two functions in terms of the measure of the symmetric difference of their level sets. Since the proof is short and elementary, we shall give it for completeness.

\begin{lemma}[\cite{fusco-sobolev}] \label{Lemma f-g}
Let $\mu \in \mathbb{F}$ and $\Psi \colon \mathbb{R} \to \mathbb{R}$ be a convex function. Then,
for any non-negative functions $u,v \colon \mathbb{R} \to \mathbb{R}$, bounded by $1$, it holds
$$
\Psi\left(  \int_{\mathbb{R} }\left\vert u-v\right\vert d\mu\right)
\leq
\int_{0}^{1}\Psi\left(  \mu\left( E_{h}^{u}\triangle E_{h}^{v}\right)  \right)  dh .
$$
\end{lemma}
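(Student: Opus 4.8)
The plan is to combine the layer-cake representation of a bounded function with Tonelli's theorem and Jensen's inequality. The starting point is a pointwise identity. Since $u$ and $v$ take values in $[0,1]$, for every $x \in \mathbb{R}$ one has $u(x)=\int_0^1 \mathbf{1}_{E_h^u}(x)\,dh$ and $v(x)=\int_0^1 \mathbf{1}_{E_h^v}(x)\,dh$, hence
\[
u(x)-v(x)=\int_0^1\bigl(\mathbf{1}_{E_h^u}(x)-\mathbf{1}_{E_h^v}(x)\bigr)\,dh .
\]
The crucial observation is that, for fixed $x$, the integrand $h\mapsto \mathbf{1}_{E_h^u}(x)-\mathbf{1}_{E_h^v}(x)$ keeps a constant sign on $(0,1)$: if $u(x)\ge v(x)$ it equals $\mathbf{1}_{[v(x),u(x))}(h)\ge 0$, and symmetrically when $v(x)\ge u(x)$. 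Therefore the absolute value may be brought inside the integral,
\[
|u(x)-v(x)|=\int_0^1\bigl|\mathbf{1}_{E_h^u}(x)-\mathbf{1}_{E_h^v}(x)\bigr|\,dh=\int_0^1\mathbf{1}_{E_h^u\triangle E_h^v}(x)\,dh .
\]

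Next I would integrate this identity in $x$ against $\mu$ and exchange the order of integration; as the integrand is non-negative, Tonelli's theorem applies and yields
\[
\int_{\mathbb{R}}|u-v|\,d\mu=\int_0^1\!\!\int_{\mathbb{R}}\mathbf{1}_{E_h^u\triangle E_h^v}(x)\,d\mu(x)\,dh=\int_0^1\mu\bigl(E_h^u\triangle E_h^v\bigr)\,dh .
\]
Finally, since Lebesgue measure on $[0,1]$ is a probability measure and $\Psi$ is convex (hence finite and continuous on $\mathbb{R}$, so that $h\mapsto \Psi(\mu(E_h^u\triangle E_h^v))$ is bounded and measurable), Jensen's inequality applied to $h\mapsto \mu(E_h^u\triangle E_h^v)$ gives
\[
\Psi\!\left(\int_{\mathbb{R}}|u-v|\,d\mu\right)=\Psi\!\left(\int_0^1\mu(E_h^u\triangle E_h^v)\,dh\right)\le \int_0^1\Psi\bigl(\mu(E_h^u\triangle E_h^v)\bigr)\,dh,
\]
which is exactly the asserted bound.

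The only mildly delicate point in this argument is the constant-sign observation that upgrades the layer-cake identity to an \emph{equality} for $|u-v|$ (rather than merely an inequality) before integrating; everything else is a routine use of Tonelli and Jensen, so I would not expect any real obstacle here. Note that the hypothesis on $\mu$ enters only through its being a probability measure, while the assumption that $u$ and $v$ are bounded by $1$ is used precisely to confine the $h$-integration to $[0,1]$, so that $dh$ is normalised and Jensen applies with the correct constant.
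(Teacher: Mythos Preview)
Your proof is correct and follows essentially the same route as the paper's: both use the layer-cake representation, the constant-sign observation to pass the absolute value through the $dh$-integral, Tonelli to swap integrals, and Jensen on $[0,1]$. The only cosmetic difference is the order of the steps (the paper applies Jensen first and then collapses the double integral, while you first establish the identity $\int_{\mathbb{R}}|u-v|\,d\mu=\int_0^1\mu(E_h^u\triangle E_h^v)\,dh$ and then apply Jensen), and you are more explicit about the constant-sign step than the paper.
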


We are now in position to prove Proposition \ref{prop quantitativa 1}

\begin{proof}[Proof of Proposition \ref{prop quantitativa 1}]
We start with the proof of $(1) \Rightarrow (2)$. Fix a borel set $E \notni 0$.
By standard approximation of the indicator function $\mathds{1}_E$ (see \cite{bob-hudre}),
we get from \eqref{eq:qc} that
$$
 \beta(s) \Psi \left( \int_\mathbb{R} |\mathds{1}_E - (\mathds{1}_E)^\# | d\mu \right) + \mu(E)
\leq \beta(s)P(E) + 2s .
$$
Since $(\mathds{1}_E)^\#  = \mathds{1}_{E^\#}$, we have
$\int_\mathbb{R} |\mathds{1}_E - (\mathds{1}_E)^\# | d\mu = \mu (E \Delta E^\#)=\lambda(E)$. Thus,
for all $s>0$,
$$
\Psi \left( \lambda(E) \right) + \frac{\mu(E)-2s}{\beta(s)} \leq P(E)
$$
 which leads to \eqref{eq:qi} thanks to the definition of $\widetilde I$.

 Now we prove that $(2)$ implies $(1)$.
 Let $u$ be a smooth function whose support does not contain $0$ and such that its median is $0$.
 By approximation, we may assume that $u$ is bounded, and without loss of generality that $\text{ Osc}(u)=1$ (by homogeneity). Now set $u^+=\max(u,0)$ and $u^-=\max(-u,0)$.
 By the coarea formula and \eqref{eq:qi} we have
 $$
\int|{u^\pm}'|d\mu = \int_0^1 P(E_h^{u^\pm})dh  \geq \int_0^1 \widetilde I( \mu(E_h^{u^\pm}))dh
+
\int_0^1 \Psi'(\lambda(E_h^{u^\pm}) ) dh .
$$
Hence, adding the two inequalities, using the convexity of $\Psi'$ and Lemma \ref{Lemma somma lambda} and \ref{Lemma f-g} together with the definition of $beta$, we have
\begin{align*}
\int |u'| d\mu
& =
\int|{u^+}'|d\mu + \int|{u^-}'|d\mu \\
& \geq
\int_0^1 \widetilde I( \mu(E_h^{u^+})) + \widetilde I( \mu(E_h^{u^-}))dh
+ \int_0^1 \Psi'( \lambda(E_h^{u^+})) + \Psi'( \lambda(E_h^{u^-}))dh  \\
& \geq
\int_0^1 \frac{\mu(E_h^{u^+})+ \mu(E_h^{u^-})}{\beta(s)} dh - \frac{2s}{\beta(s)}
+ 2 \int_0^1 \Psi'\left( \frac{\lambda(E_h^{u^+}) + \lambda(E_h^{u^-})}{2} \right)dh    \\
& \geq \frac{\int |u|d\mu}{\beta(s)} + 2  \int_0^1 \Psi'\left( \frac{\lambda(E_h^{|u|})}{2} \right)dh - \frac{2s}{\beta(s)}  \\
& \geq \frac{\int |u|d\mu -2s}{\beta(s)} + 2 \Psi'\left( \frac{\int ||u|-u^\#|d\mu }{2} \right)
- \frac{2s}{\beta(s)} .
\end{align*}
Note that, in the first line of the above computation, we used that, since $u$ is smooth,
the set $\{x \in \mathbb{R} : u'(x) \neq 0 \mbox{ and } u(x)=0\}$ is $\mu$-negligible (see \cite{bcr-06}). Multiplying by $\beta(s)$ leads to the expected result since $\text{ Osc}(u)=1$.
This achieves the proof of the proposition.
\end{proof}

It remains to prove Lemma \ref{Lemma somma lambda} and Lemma \ref{Lemma f-g}.

\begin{proof}[Proof of Lemma \ref{Lemma somma lambda}]
Fix $u \colon \mathbb{R} \to \mathbb{R}$ with $\mu$-median zero.
By the very definition of the asymmetry (and since $0$ is median of $u$), we have
$\frac{\lambda(E_{h}^{u^\pm})}{2}
=
\mu\left(  E_{h}^{u^\pm}\right)
-\mu\left(E_{h}^{u^\pm}\cap\left(E_{h}^{u^\pm}\right)^{\#}\right)$ for all $h>0$.
Then, we observe that $E_{h}^{u^{+}}$ and $E_{h}^{u^{-}}$ are disjoint,
$E_{h}^{u^{+}}\cup E_{h}^{u^{-}}=E_{h}^{\left\vert u\right\vert }$ and
$\left(E_{h}^{u^{\pm}}\right)^{\#}\subseteq \left(E_{h}^{\left\vert u\right\vert }\right)^{\#}$.
Hence,
\begin{align*}
\frac{\lambda(E_{h}^{u^{+}})}{2}+\frac{\lambda(E_{h}^{u^{-}})}{2}
& \geq
\mu\left(  E_{h}^{u^{+}}\right)  +\mu\left(  E_{h}^{u^{-}}\right)  -\left[
\mu\left(  E_{h}^{u^{+}}\cap\left(  E_{h}^{\left\vert u\right\vert }\right)
^{\#}\right)  +\mu\left(  E_{h}^{u^{-}}\cap\left(  E_{h}^{\left\vert
u\right\vert }\right)  ^{\#}\right)  \right]  \\
& =
\mu(E_{h}^{\left\vert u\right\vert })-\mu(E_{h}^{\left\vert u\right\vert
}\cap(E_{h}^{\left\vert u\right\vert })^{\#})=\frac{\lambda(E_{h}
^{\left\vert u\right\vert })}{2}
\end{align*}
which is the expected result.
\end{proof}

\begin{proof}[Proof of Lemma \ref{Lemma f-g}]
By Jensen's inequality we have
\begin{align*}
\int_{0}^{1}\Psi\left(  \mu\left(  E_{h}^{u}\triangle E_{h}^{v}\right)
\right)  dh
&  =\int_{0}^{1}\Psi\left( \int_{\mathbb{R}}\left\vert \chi_{E_{h}^{u}}(x)-\chi_{E_{h}^{v}}(x)\right\vert d\mu(x)\right)   dh\\
&  \geq
\Psi\left(  \int_{0}^{1}\int_{\mathbb{R}}\left\vert \chi_{E_{h}^{u}}(x)-\chi_{E_{h}^{v}}(x)\right\vert d\mu(x)dh\right) \\
&  =
\Psi\left(  \int_{\mathbb{R}}\left\vert \int_{0}^{1}\left[  \chi_{E_{h}^{u}}(x)-\chi_{E_{h}^{v}%
}(x)\right]  dh\right\vert d\mu(x)\right)
\end{align*}
which leads to the expected result.
\end{proof}

%%%%%%%%%%%%%%%%%%%%%%%%%%%%%%%%%%%%%%%%%%%%%%%%%%%%%%%%%%%%%%%%%%%%%%%%%%%%%%%%%%%%%%%%%%%%%%%%%%%%

\section*{Acknowledgement}
This work was partially done during the visits made by the first two authors to MODAL'X, Universit\'e Paris Ouest Nanterre la D\'efense. Hospitality and support of this
institution is gratefully acknowledged.

%%%%%%%%%%%%%%%%%%%%%%%%%%%%%%%%%%%%%%%%%%%%%%%%
%%%%%%%%%%%%%%%%%%%%%%%%%%%%%%%%%%%%%%%%%%%%%%%%
%%%%%%%%%%%%%% Bibliography %%%%%%%%%%%%%%%%%%%%

\bibliographystyle{plain}
\bibliography{quantitative}

\def\cprime{$'$}
\begin{thebibliography}{10}

\bibitem{libro-fusco}
L.~Ambrosio, N.~Fusco, and D.~Pallara.
\newblock {\em Functions of bounded variation and free discontinuity problems}.
\newblock Oxford Mathematical Monographs. The Clarendon Press Oxford University
  Press, New York, 2000.

\bibitem{aubin}
T.~Aubin.
\newblock Probl\`emes isop\'erim\'etriques et espaces de {S}obolev.
\newblock {\em Journal of Differential Geometry}, 11(4):573--598, 1976.

\bibitem{barenblatt}
G.~I. Barenblatt.
\newblock On some unsteady motions of a liquid and gas in a porous medium.
\newblock {\em Akad. Nauk SSSR. Prikl. Mat. Meh.}, 16:67--78, 1952.

\bibitem{barthe-02}
F.~Barthe.
\newblock Log-concave and spherical models in isoperimetry.
\newblock {\em Geom. Funct. Anal.}, 12(1):32--55, 2002.

\bibitem{bcr-05}
F.~Barthe, P.~Cattiaux, and C.~Roberto.
\newblock Concentration for independent random variables with heavy tails.
\newblock {\em AMRX Appl. Math. Res. Express}, (2):39--60, 2005.

\bibitem{bcr-06}
F.~Barthe, P.~Cattiaux, and C.~Roberto.
\newblock Interpolated inequalities between exponential and {G}aussian,
  {O}rlicz hypercontractivity and isoperimetry.
\newblock {\em Rev. Mat. Iberoam.}, 22(3):993--1067, 2006.

\bibitem{bennett}
C.~Bennett and R.~Sharpley.
\newblock {\em Interpolation of operators}, volume 129 of {\em Pure and Applied
  Mathematics}.
\newblock Academic Press Inc., Boston, MA, 1988.

\bibitem{bobkov}
S.~Bobkov.
\newblock Extremal properties of half-spaces for log-concave distributions.
\newblock {\em Ann. Probab.}, 24(1):35--48, 1996.

\bibitem{bgrs}
S.~Bobkov, N.~Gozlan, C.~Roberto, and P.-M. Samson.
\newblock Bounds on the deficit in the log-sobolev inequality.
\newblock Preprint, 2013.

\bibitem{bobkov-k}
S.~G. Bobkov.
\newblock Large deviations and isoperimetry over convex probability measures
  with heavy tails.
\newblock {\em Electron. J. Probab.}, 12:1072--1100 (electronic), 2007.

\bibitem{bob-hudre}
S.~G. Bobkov and C.~Houdr{\'e}.
\newblock Some connections between isoperimetric and {S}obolev-type
  inequalities.
\newblock {\em Mem. Amer. Math. Soc.}, 129(616):viii+111, 1997.

\bibitem{borel1}
C.~Borell.
\newblock Convex measures on locally convex spaces.
\newblock {\em Ark. Mat.}, 12:239--252, 1974.

\bibitem{borell75}
C.~Borell.
\newblock The {B}runn-{M}inkowski inequality in {G}auss space.
\newblock {\em Invent. Math.}, 30(2):207--216, 1975.

\bibitem{carlen-kerce}
E.~A. Carlen and C.~Kerce.
\newblock On the cases of equality in {B}obkov's inequality and {G}aussian
  rearrangement.
\newblock {\em Calc. Var. Partial Differential Equations}, 13(1):1--18, 2001.

\bibitem{roberto}
P.~Cattiaux, N.~Gozlan, A.~Guillin, and C.~Roberto.
\newblock Functional inequalities for heavy tailed distributions and
  application to isoperimetry.
\newblock {\em Electron. J. Probab.}, 15:no. 13, 346--385, 2010.

\bibitem{cheeger}
J.~Cheeger.
\newblock A lower bound for the smallest eigenvalue of the {L}aplacian.
\newblock In {\em Problems in analysis ({P}apers dedicated to {S}alomon
  {B}ochner, 1969)}, pages 195--199. Princeton Univ. Press, Princeton, N. J.,
  1970.

\bibitem{fusco-sobolev}
A.~Cianchi, N.~Fusco, F.~Maggi, and A.~Pratelli.
\newblock The sharp {S}obolev inequality in quantitative form.
\newblock {\em J. Eur. Math. Soc. (JEMS)}, 11(5):1105--1139, 2009.

\bibitem{fusco}
A.~Cianchi, N.~Fusco, F.~Maggi, and A.~Pratelli.
\newblock On the isoperimetric deficit in {G}auss space.
\newblock {\em Amer. J. Math.}, 133(1):131--186, 2011.

\bibitem{decastro}
Y.~de~Castro.
\newblock Quantitative isoperimetric inequalities on the real line.
\newblock {\em Ann. Math. Blaise Pascal}, 18(2):251--271, 2011.

\bibitem{dolbeault}
J.~Dolbeault and G.~Toscani.
\newblock Improved interpolation inequalities, relative entropy and fast
  diffusion equations.
\newblock To appear in Annales de l'Institut Henri Poincare (C) Non Linear
  Analysis, 2013.

\bibitem{ehrhard-86}
Antoine Ehrhard.
\newblock \'{E}l\'ements extr\'emaux pour les in\'egalit\'es de
  {B}runn-{M}inkowski gaussiennes.
\newblock {\em Ann. Inst. H. Poincar\'e Probab. Statist.}, 22(2):149--168,
  1986.

\bibitem{Eldan}
R.~Eldan.
\newblock A two-sided estimate for the gaussian noise stability deficit.
\newblock Preprint, available at arXiv:1307.2781, 2013.

\bibitem{figali-maggi}
A.~Figalli and F.~Maggi.
\newblock On the isoperimetric problem for radial log-convex densities.
\newblock To appear in Calc. Var. Partial Differential Equations, 2013.

\bibitem{figali-m-p-09}
A.~Figalli, F.~Maggi, and A.~Pratelli.
\newblock A refined {B}runn-{M}inkowski inequality for convex sets.
\newblock {\em Ann. Inst. H. Poincar\'e Anal. Non Lin\'eaire},
  26(6):2511--2519, 2009.

\bibitem{gozlan-r-s}
N.~Gozlan, C.~Roberto, and P.-M. Samson.
\newblock Isoperimetry for product of heavy tails distributions.
\newblock In {\em Progress in analysis and its applications}, pages 470--478.
  World Sci. Publ., Hackensack, NJ, 2010.

\bibitem{milman}
C.~Houdr{\'e}, M.~Ledoux, E.~Milman, and M.~Milman, editors.
\newblock {\em Concentration, functional inequalities and isoperimetry}, volume
  545 of {\em Contemporary Mathematics}.
\newblock American Mathematical Society, Providence, RI, 2011.
\newblock Papers from the International Workshop held at Florida Atlantic
  University, Boca Raton, FL, October 29--November 1, 2009.

\bibitem{indrei}
E.~Indrei and D.~Marcon.
\newblock A quantitative log-sobolev inequality for a two parameter family of
  functions.
\newblock To appear in International Mathematics Research Notices, 2013.

\bibitem{ledoux-saintflour}
M.~Ledoux.
\newblock Isoperimetry and {G}aussian analysis.
\newblock In {\em Lectures on probability theory and statistics
  ({S}aint-{F}lour, 1994)}, volume 1648 of {\em Lecture Notes in Math.}, pages
  165--294. Springer, Berlin, 1996.

\bibitem{ledoux-99}
M.~Ledoux.
\newblock Concentration of measure and logarithmic {S}obolev inequalities.
\newblock In {\em S\'eminaire de {P}robabilit\'es, {XXXIII}}, volume 1709 of
  {\em Lecture Notes in Math.}, pages 120--216. Springer, Berlin, 1999.

\bibitem{martin}
J.~Mart{\'{\i}}n and M.~Milman.
\newblock Isoperimetry and symmetrization for {S}obolev spaces on metric
  spaces.
\newblock {\em C. R. Math. Acad. Sci. Paris}, 347(11-12):627--630, 2009.

\bibitem{martin-milman-11}
J.~Mart{\'{\i}}n and M.~Milman.
\newblock Sobolev inequalities, rearrangements, isoperimetry and interpolation
  spaces.
\newblock In {\em Concentration, functional inequalities and isoperimetry},
  volume 545 of {\em Contemp. Math.}, pages 167--193. Amer. Math. Soc.,
  Providence, RI, 2011.

\bibitem{mazja}
V.~G. Maz'ja.
\newblock {\em Sobolev spaces}.
\newblock Springer {S}eries in {S}oviet {M}athematics. Springer, Berlin, 1985.

\bibitem{mossel-neeman1}
E.~Mossel and J.~Neeman.
\newblock Robust dimension free isoperimetry in gaussian space.
\newblock To appear in Annals of Probability, 2013.

\bibitem{mossel-neeman2}
E.~Mossel and J.~Neeman.
\newblock Robust optimality of gaussian noise stability.
\newblock To appear in Journal of the European Math Society, 2013.

\bibitem{rao-ren}
M.~M. Rao and Z.~D. Ren.
\newblock {\em Theory of {O}rlicz spaces}, volume 146 of {\em Monographs and
  Textbooks in Pure and Applied Mathematics}.
\newblock Marcel Dekker Inc., New York, 1991.

\bibitem{roberto-10}
C.~Roberto.
\newblock Isoperimetry for product of probability measures: recent results.
\newblock {\em Markov Process. Related Fields}, 16(4):617--634, 2010.

\bibitem{rockner-wang}
M.~R{\"o}ckner and F.-Y. Wang.
\newblock Weak {P}oincar\'e inequalities and {$L^2$}-convergence rates of
  {M}arkov semigroups.
\newblock {\em J. Funct. Anal.}, 185(2):564--603, 2001.

\bibitem{morgan}
C.~Rosales, A.~Ca{\~n}ete, V.~Bayle, and F.~Morgan.
\newblock On the isoperimetric problem in {E}uclidean space with density.
\newblock {\em Calc. Var. Partial Differential Equations}, 31(1):27--46, 2008.

\bibitem{sudakov-tsirelson}
V.~N. Sudakov and B.~S. Tsirel{\cprime}son.
\newblock Extremal properties of half-spaces for spherically invariant
  measures.
\newblock {\em Zap. Nau\v cn. Sem. Leningrad. Otdel. Mat. Inst. Steklov.
  (LOMI)}, 41:14--24, 165, 1974.
\newblock Problems in the theory of probability distributions, II.

\bibitem{talagrand-91}
M.~Talagrand.
\newblock A new isoperimetric inequality and the concentration of measure
  phenomenon.
\newblock In {\em Geometric aspects of functional analysis (1989--90)}, volume
  1469 of {\em Lecture Notes in Math.}, pages 94--124. Springer, Berlin, 1991.

\bibitem{talenti-76}
G.~Talenti.
\newblock Best constant in {S}obolev inequality.
\newblock {\em Ann. Mat. Pura Appl. (4)}, 110:353--372, 1976.

\end{thebibliography}

\end{document}